

\documentclass[12pt]{amsart}
\usepackage{amssymb,latexsym,tikz}
\usepackage[top=40mm, bottom=25mm, left=35mm, right=35mm]{geometry}
\usetikzlibrary{arrows,decorations.markings}
\tikzstyle arrowstyle=[scale=1]
\tikzstyle directed=[postaction={decorate,
decoration={markings,mark=at position .65 with {\arrow[arrowstyle]{stealth}}}}]

\usepackage{multicol}
\usepackage{mathtools}
\usepackage{tikz-cd}
\usetikzlibrary{decorations.markings}
\tikzset{mid vert/.style={/utils/exec=\tikzset{every node/.append style={outer sep=0.8ex}},
postaction=decorate,decoration={markings,
  mark=at position 0.5 with {\draw[-] (0,#1) -- (0,-#1);}}},
mid vert/.default=0.75ex}
\usepackage{amscd}
\usepackage{xypic}
\usepackage[utf8]{inputenc}








\begin{document}
\title{Combinatorial Hopf algebras from restriction species with preorder cuts}




\newcommand{\llin}{\raisebox{1pt}{\scalebox{1}[0.6]{$\mid$}}}


\theoremstyle{plain}
\newtheorem{theorem}{Theorem}[section]
\newtheorem{corollary}[theorem]{Corollary}
\newtheorem*{main}{Main Theorem}
\newtheorem{lemma}[theorem]{Lemma}
\newtheorem{proposition}[theorem]{Proposition}
\newtheorem{conjecture}[theorem]{Conjecture}
\newtheorem{theoremp}{Theorem}

\theoremstyle{definition}
\newtheorem{definition}[theorem]{Definition}
\newtheorem{fact}[theorem]{Fact}
\newtheorem{obs}[theorem]{Observation}
\newtheorem{definisjon}[theorem]{Definisjon}
\newtheorem{problem}[theorem]{Problem}
\newtheorem{condition}[theorem]{Condition}

\theoremstyle{remark}
\newtheorem{notation}[theorem]{Notation}
\newtheorem{remark}[theorem]{Remark}
\newtheorem{example}[theorem]{Example}
\newtheorem{claim}{Claim}
\newtheorem{observation}[theorem]{Observation}
\newtheorem{question}[theorem]{Question}


\newcommand{\psp}[1]{{{\bf P}^{#1}}}
\newcommand{\psr}[1]{{\bf P}(#1)}
\newcommand{\op}{{\mathcal O}}
\newcommand{\opw}{\op_{\psr{W}}}

\newcommand{\ini}[1]{\text{in}(#1)}
\newcommand{\gin}[1]{\text{gin}(#1)}
\newcommand{\kr}{{\Bbbk}}
\newcommand{\pd}{\partial}
\newcommand{\vardel}{\partial}
\renewcommand{\tt}{{\bf t}}


\newcommand{\coh}{{{\text{{\rm coh}}}}}


\newcommand{\modv}[1]{{#1}\text{-{mod}}}
\newcommand{\modstab}[1]{{#1}-\underline{\text{mod}}}

\newcommand{\sut}{{}^{\tau}}
\newcommand{\sumit}{{}^{-\tau}}
\newcommand{\til}{\thicksim}

\newcommand{\totp}{\text{Tot}^{\prod}}
\newcommand{\dsum}{\bigoplus}
\newcommand{\dprod}{\prod}
\newcommand{\lsum}{\oplus}
\newcommand{\lprod}{\Pi}

\newcommand{\La}{{\Lambda}}

\newcommand{\sirstj}{\circledast}

\newcommand{\she}{\EuScript{S}\text{h}}
\newcommand{\cm}{\EuScript{CM}}
\newcommand{\cmd}{\EuScript{CM}^\dagger}
\newcommand{\cmri}{\EuScript{CM}^\circ}
\newcommand{\cler}{\EuScript{CL}}
\newcommand{\clerd}{\EuScript{CL}^\dagger}
\newcommand{\clerri}{\EuScript{CL}^\circ}
\newcommand{\gor}{\EuScript{G}}
\newcommand{\cF}{\mathcal{F}}
\newcommand{\cG}{\mathcal{G}}
\newcommand{\cM}{\mathcal{M}}
\newcommand{\cE}{\mathcal{E}}
\newcommand{\cI}{\mathcal{I}}
\newcommand{\cP}{\mathcal{P}}
\newcommand{\cK}{\mathcal{K}}
\newcommand{\cS}{\mathcal{S}}
\newcommand{\cC}{\mathcal{C}}
\newcommand{\cO}{\mathcal{O}}
\newcommand{\cJ}{\mathcal{J}}
\newcommand{\cU}{\mathcal{U}}
\newcommand{\cQ}{\mathcal{Q}}
\newcommand{\cX}{\mathcal{X}}
\newcommand{\cY}{\mathcal{Y}}
\newcommand{\cZ}{\mathcal{Z}}
\newcommand{\cV}{\mathcal{V}}

\newcommand{\mm}{\mathfrak{m}}

\newcommand{\dlim} {\varinjlim}
\newcommand{\ilim} {\varprojlim}

\newcommand{\CM}{\text{CM}}
\newcommand{\Mon}{\text{Mon}}


\newcommand{\Kom}{\text{Kom}}


\newcommand{\EH}{{\mathbf H}}
\newcommand{\res}{\text{res}}
\newcommand{\Hom}{\text{Hom}}
\newcommand{\inhom}{{\underline{\text{Hom}}}}
\newcommand{\Ext}{\text{Ext}}
\newcommand{\Tor}{\text{Tor}}
\newcommand{\ghom}{\mathcal{H}om}
\newcommand{\gext}{\mathcal{E}xt}
\newcommand{\id}{\text{{id}}}
\newcommand{\im}{\text{im}\,}
\newcommand{\codim} {\text{codim}\,}
\newcommand{\resol}{\text{resol}\,}
\newcommand{\rank}{\text{rank}\,}
\newcommand{\lpd}{\text{lpd}\,}
\newcommand{\coker}{\text{coker}\,}
\newcommand{\supp}{\text{supp}\,}
\newcommand{\Ad}{A_\cdot}
\newcommand{\Bd}{B_\cdot}
\newcommand{\Fd}{F_\cdot}
\newcommand{\Gd}{G_\cdot}


\newcommand{\sus}{\subseteq}
\newcommand{\sups}{\supseteq}
\newcommand{\pil}{\rightarrow}
\newcommand{\vpil}{\leftarrow}
\newcommand{\rpil}{\leftarrow}
\newcommand{\lpil}{\longrightarrow}
\newcommand{\inpil}{\hookrightarrow}
\newcommand{\pils}{\twoheadrightarrow}
\newcommand{\projpil}{\dashrightarrow}
\newcommand{\dotpil}{\dashrightarrow}
\newcommand{\adj}[2]{\overset{#1}{\underset{#2}{\rightleftarrows}}}
\newcommand{\mto}[1]{\stackrel{#1}\longrightarrow}
\newcommand{\vmto}[1]{\stackrel{#1}\longleftarrow}
\newcommand{\mtoelm}[1]{\stackrel{#1}\mapsto}
\newcommand{\bihom}[2]{\overset{#1}{\underset{#2}{\rightleftarrows}}}
\newcommand{\eqv}{\Leftrightarrow}
\newcommand{\impl}{\Rightarrow}

\newcommand{\iso}{\cong}
\newcommand{\te}{\otimes}
\newcommand{\into}[1]{\hookrightarrow{#1}}
\newcommand{\ekv}{\Leftrightarrow}
\newcommand{\equi}{\simeq}
\newcommand{\isopil}{\overset{\cong}{\lpil}}
\newcommand{\equipil}{\overset{\equi}{\lpil}}
\newcommand{\ispil}{\isopil}
\newcommand{\vvi}{\langle}
\newcommand{\hvi}{\rangle}
\newcommand{\susneq}{\subsetneq}
\newcommand{\sgn}{\text{sign}}


\newcommand{\xd}{\check{x}}
\newcommand{\ortog}{\bot}
\newcommand{\tL}{\tilde{L}}
\newcommand{\tM}{\tilde{M}}
\newcommand{\tH}{\tilde{H}}
\newcommand{\tvH}{\widetilde{H}}
\newcommand{\tvh}{\widetilde{h}}
\newcommand{\tV}{\tilde{V}}
\newcommand{\tS}{\tilde{S}}
\newcommand{\tT}{\tilde{T}}
\newcommand{\tR}{\tilde{R}}
\newcommand{\tf}{\tilde{f}}
\newcommand{\ts}{\tilde{s}}
\newcommand{\tp}{\tilde{p}}
\newcommand{\tr}{\tilde{r}}
\newcommand{\tfst}{\tilde{f}_*}
\newcommand{\empt}{\emptyset}
\newcommand{\bfa}{{\mathbf a}}
\newcommand{\bfb}{{\mathbf b}}
\newcommand{\bfd}{{\mathbf d}}
\newcommand{\bfl}{{\mathbf \ell}}
\newcommand{\bfx}{{\mathbf x}}
\newcommand{\bfm}{{\mathbf m}}
\newcommand{\bfv}{{\mathbf v}}
\newcommand{\bft}{{\mathbf t}}
\newcommand{\bbfa}{{\mathbf a}^\prime}
\newcommand{\la}{\lambda}
\newcommand{\bfen}{{\mathbf 1}}
\newcommand{\bfe}{{\mathbf 1}}
\newcommand{\ep}{\epsilon}
\newcommand{\en}{r}
\newcommand{\tu}{s}
\newcommand{\Sym}{\text{Sym}}

\newcommand{\ome}{\omega_E}

\newcommand{\bevis}{{\bf Proof. }}
\newcommand{\demofin}{\qed \vskip 3.5mm}
\newcommand{\nyp}[1]{\noindent {\bf (#1)}}
\newcommand{\demo}{{\it Proof. }}
\newcommand{\demodone}{\demofin}
\newcommand{\parg}{{\vskip 2mm \addtocounter{theorem}{1}  
                   \noindent {\bf \thetheorem .} \hskip 1.5mm }}

\newcommand{\lcm}{{\text{lcm}}}


\newcommand{\dl}{\Delta}
\newcommand{\cdel}{{C\Delta}}
\newcommand{\cdelp}{{C\Delta^{\prime}}}
\newcommand{\dlst}{\Delta^*}
\newcommand{\Sdl}{{\mathcal S}_{\dl}}
\newcommand{\lk}{\text{lk}}
\newcommand{\lkd}{\lk_\Delta}
\newcommand{\lkp}[2]{\lk_{#1} {#2}}
\newcommand{\del}{\Delta}
\newcommand{\delr}{\Delta_{-R}}
\newcommand{\dd}{{\dim \del}}
\newcommand{\Del}{\Delta}

\renewcommand{\aa}{{\bf a}}
\newcommand{\bb}{{\bf b}}
\newcommand{\cc}{{\bf c}}
\newcommand{\xx}{{\bf x}}
\newcommand{\yy}{{\bf y}}
\newcommand{\zz}{{\bf z}}
\newcommand{\mv}{{\xx^{\aa_v}}}
\newcommand{\mF}{{\xx^{\aa_F}}}

\newcommand{\Symm}{\text{Sym}}
\newcommand{\pnm}{{\bf P}^{n-1}}
\newcommand{\opnm}{{\go_{\pnm}}}
\newcommand{\ompnm}{\omega_{\pnm}}

\newcommand{\pn}{{\bf P}^n}
\newcommand{\hele}{{\mathbb Z}}
\newcommand{\nat}{{\mathbb N}}
\newcommand{\rasj}{{\mathbb Q}}
\newcommand{\bfone}{{\mathbf 1}}

\newcommand{\dt}{\bullet}
\newcommand{\disk}{\scriptscriptstyle{\bullet}}

\newcommand{\cxF}{F_\dt}
\newcommand{\pol}{f}

\newcommand{\Rn}{{\mathbb R}^n}
\newcommand{\An}{{\mathbb A}^n}
\newcommand{\frg}{\mathfrak{g}}
\newcommand{\PW}{{\mathbb P}(W)}

\newcommand{\pos}{{\mathcal Pos}}
\newcommand{\g}{{\gamma}}

\newcommand{\Vaa}{V_0}
\newcommand{\Bp}{B^\prime}
\newcommand{\Bpp}{B^{\prime \prime}}
\newcommand{\bbp}{\mathbf{b}^\prime}
\newcommand{\bbpp}{\mathbf{b}^{\prime \prime}}
\newcommand{\bp}{{b}^\prime}
\newcommand{\bpp}{{b}^{\prime \prime}}

\newcommand{\oLa}{\overline{\Lambda}}
\newcommand{\ov}[1]{\overline{#1}}
\newcommand{\ovv}[1]{\overline{\overline{#1}}}
\newcommand{\tm}{\tilde{m}}
\newcommand{\po}{\bullet}

\newcommand{\surj}[1]{\overset{#1}{\twoheadrightarrow}}
\newcommand{\Supp}{\text{Supp}}

\def\CC{{\mathbb C}}
\def\GG{{\mathbb G}}
\def\ZZ{{\mathbb Z}}
\def\NN{{\mathbb N}}
\def\RR{{\mathbb R}}
\def\OO{{\mathbb O}}
\def\QQ{{\mathbb Q}}
\def\VV{{\mathbb V}}
\def\PP{{\mathbb P}}
\def\EE{{\mathbb E}}
\def\FF{{\mathbb F}}
\def\AA{{\mathbb A}}

\newcommand{\promap}{\lpil}

\newcommand{\kk}{{\Bbbk}}
\renewcommand{\SS}{{\mathcal S}}
\newcommand{\pr}{\preceq}
\newcommand{\su}{\succeq}
\newcommand{\bX}{{\mathbf X}}
\newcommand{\bY}{{\mathbf Y}}
\newcommand{\bA}{{\mathbf A}}
\newcommand{\bB}{{\mathbf B}}
\renewcommand{\op}{{\text {op}}}
\newcommand{\set}{{\rm \bf set}}
\newcommand{\poset}{{\rm \bf poset}}
\newcommand{\setx}{{\rm \bf set^{\times}}}
\newcommand{\setmm}{{\rm \bf set_{\NN}}}
\newcommand{\setz}{{\rm \bf set_{\ZZ}}}
\newcommand{\setii}{{\rm \bf set^{ci}}}

\newcommand{\vect}{{\rm \bf vect}}
\newcommand{\spsm}{{\mathsf {SpS}_{\NN}}}
\newcommand{\sps}{{\mathsf {SpS}}}
\newcommand{\spvv}{{\mathsf {SpV}}}
\newcommand{\lattice}{{\rm \bf lattice}}
\newcommand{\Pre}{{\rm Pre}}
\newcommand{\nill}{{\mathbf 0}}
\newcommand{\nil}{\nill}
\renewcommand{\CC}{\mathcal C}
\newcommand{\one}{{\mathbf 1}}
\newcommand{\scup}{\sqcup}
\newcommand{\cut}{\text{cut}}

\newcommand{\vlin}{\raisebox{1.4pt}{\scalebox{1}[0.45]{$\mid$}}}
\newcommand{\lli}{\raisebox{3pt}{\rule{3.3mm}{0.5pt}}}

\newcommand{\nrel}[1]{\,\mathrlap{{\hskip 1.8mm}{\vlin}}{\lli}_{\,#1}\,}
\renewcommand{\cc}{{\rm \bf {cc}}}
\newcommand{\nc}{{\rm \bf {nc}}}
\newcommand{\cn}{{\rm \bf {cn}}}
\newcommand{\nn}{{\rm \bf {nn}}}
\newcommand{\latt}{{\rm \bf {lattice}}}
\newcommand{\co}{{\rm {co}}}

\newcommand{\sS}{\mathsf{S}}
\newcommand{\sA}{\mathsf{A}}
\newcommand{\sSA}{\mathsf{S}_{\backslash{\sA}}}
\newcommand{\sBA}{\mathsf{B}_{\backslash{\sA}}}
\newcommand{\sPA}{\mathsf{P}_{\backslash{\sA}}}

\newcommand{\sPa}{\mathsf{Park}}
\newcommand{\sPaA}{\mathsf{Pa}_{\backslash{\sA}}}
\newcommand{\sP}{\mathsf{P}}
\newcommand{\sQ}{\mathsf{Q}}
\newcommand{\sB}{\mathsf{B}}
\newcommand{\fB}{\mathbf{B}}
\newcommand{\sR}{\mathsf{R}}
\newcommand{\sH}{\mathsf{H}}
\newcommand{\sT}{\mathsf{T}}
\newcommand{\sG}{\mathsf{G}}
\newcommand{\sCC}{\mathsf{CC}}
\newcommand{\sNC}{\mathsf{NC}}
\newcommand{\sCN}{\mathsf{CN}}
\newcommand{\sNN}{\mathsf{NN}}
\newcommand{\bob}{\circ}
\newcommand{\clK}{\overline{\mathcal{K}}}
\newcommand{\mato}[1]{\overset{#1}{\mapsto}}
\newcommand{\xlp}[1]{X_{\backslash #1}}
\newcommand{\xgp}[1]{X_{/ #1}}
\newcommand{\tot}{{\text {tot}}}
\newcommand{\unit}{{\mathbf 1}}

\newcommand{\oX}{\overline{X}}
\long\def\comment#1{}
\newcommand{\ignore}[1]{}

\newcommand{\ABCD}[4]{
\begin{tikzpicture}[scale = 0.7]
  \draw (-0.7,0)--(0.7,0);
  \draw (0,-0.7)--(0,0.7);
  \node at (-0.4,-0.4) {$#3$};
  \node at (-0.4,0.4) {$#1$};
  \node at (0.4,-0.4) {$#4$};
  \node at (0.4,0.4) {$#2$};
\end{tikzpicture}}

\newcommand{\STv}[2]{
\begin{tikzpicture}[scale = 0.7]
  \draw (0,-0.7)--(0,0.7);
  \node at (-0.4,0) {$#1$};
  \node at (0.4,0) {$#2$};
\end{tikzpicture}}

\newcommand{\STh}[2]{
\begin{tikzpicture}[scale = 0.7]
  \draw (-0.6,0)--(0.6,0);
  \node at (0,-0.4) {$#2$};
  \node at (0, 0.4) {$#1$};
\end{tikzpicture}}


\newcommand{\ABCDb}[4]{
\begin{tikzpicture}[scale = 0.7]
  \draw (-1,0)--(1,0);
  \draw (0,-1)--(0,1);
  \node at (-0.67,-0.55) {$#3$};
  \node at (-0.67,0.55) {$#1$};
  \node at (0.67,-0.55) {$#4$};
  \node at (0.67,0.55) {$#2$};
\end{tikzpicture}}

\newcommand{\STvb}[2]{
\begin{tikzpicture}[scale = 0.7]
  \draw (0,-0.7)--(0,0.7);
  \node at (-0.8,0) {$#1$};
  \node at (0.9,0) {$#2$};
\end{tikzpicture}}

\newcommand{\SThb}[2]{
\begin{tikzpicture}[scale = 0.7]
  \draw (-0.7,0)--(0.7,0);
  \node at (0,-0.4) {$#2$};
  \node at (0, 0.4) {$#1$};
\end{tikzpicture}}







\author{Gunnar Fl{\o}ystad}
\address{Matematisk institutt\\
        Postboks\\
        5020 Bergen\\
      Norway}
\email{gunnar@mi.uib.no}

 \begin{abstract}
  We get new Hopf algebras (HA):
  1. A wealth of quotient HA's of the Malvenuto-Reutenauer
  HA (the Loday-Ronco HA being a special case). They consist of the permutations
  avoiding an {\it arbitrary} set of permutations without global descents,
  2. A HA of pairs of parking filtrations,
  and 3. Four HA of pairs of preorders.

  New concepts in this setting are: 1. a category $\setmm$ whose objects
  are sets,
  but morphisms are represented by matrices of natural numbers, and 2.
  restriction species $\sS$ on sets coming with  
  pairs of natural transformations
  $\pi_1, \pi_2 : \sS \rightarrow \Pre$ to the species of preorders.
  These induce two coproducts $\Delta_1$ and $\Delta_2$. Dualizing
  $\Delta_1$ gives product $\mu_1$ and coproduct $\Delta_2$, 
  giving bimonoid species. 
\end{abstract}



\keywords{
restriction species , bimonoid , Hopf algebra ,
    parking functions ,
    preorders , permutations , Malvenuto-Reutenauer , Loday-Ronco
    , global descents}
\subjclass[2020]{Primary: 16T30; Secondary: 05E99, 06A11}








\maketitle
\section{Introduction}
\label{sec:intro}
A basic tenet of {\it combinatorial} Hopf algebras is that they
come with a distinguished {\it basis} (or sometimes several).
The basis elements are typically isomorphism classes of combinatorial
objects. To make a more refined setting one may work with Hopf species
where one has labeled objects, and then derive a Hopf algebra by a suitable
functor (the Fock functor \cite[Sec.15]{AgMa}). A species
is a functor from $\setx$ (sets with bijections) to usually either the
category $\vect$ of vector spaces, or $\set$ the category of sets.

For combinatorial Hopf species using $\vect$ is common, but again
since they come with a distinguished basis, on the face of it
$\set$ appears more direct. Versions using $\set$ are found in
\cite[Def.2.5]{AgAr} and
in \cite[Sec.4]{AM2}. However drawbacks are:
\begin{itemize}
\item[i.] These are somewhat ad hoc defined, they are not purely categorical.
  In fact comonoids in species over $\set$
  do not exist in the categorical sense.
\item[ii.] The definition does not work for most Hopf algebras, as
a product of two basis elements is typically a sum of
basis elements.
\end{itemize}

\subsection{New concepts and points of view}
\label{subsec:intro-new}
Based on many of the most ubiquitous combinatorial Hopf algebras, we
introduce several new points of view:

\medskip
\noindent {\bf 1.}{\it  Replace $\vect$ with a category $\setmm$.} The objects
 in this category are sets,
  but have many more morphisms than $\set$, bringing it closer to
  $\vect$. In particular the empty set is a null object, and a morphism
  $X \mto{f} Y$ has a dual morphism $Y \mto{Df} X$. By this we may get:
  
\noindent {\bf 2.} {\it Two coproducts $\Delta^1, \Delta^2$.} 
  Given a bimonoid with coproduct $\Delta$ and product
  $\mu$, the product $\mu$ may now be dualized to a coproduct $\Delta^\prime$.
  We take two coproducts $\Delta^1, \Delta^2$ as our {\it starting point}, and
  ask: When do these give a bimonoid when you dualize $\Delta^2$?
  The rationale for this is that coproducts, splitting up, is usually
  simpler to work with than products,  all possible ways to assemble together.
So our focus turns to making coproducts:
  
  \noindent {\bf 3.} {\it Coproducts from restrictions.}
  The species $\sS$ we shall work with have restrictions and these will give the
  coproducts.   For every injection of
  sets $Y \hookrightarrow X$ there is a restriction map
  \[ \sS[X] \pil \sS[Y], \quad s \mapsto s_Y. \]
  For $X$ a disjoint union $A \sqcup B$, our coproduct will send:
  \begin{equation}
    \sS[X] \pil \sS[A] \times \sS[B], \quad s \mapsto \text{ restriction
      pair } (s_A,s_B) \text { or } \nil. \label{eq:intro-rest}
  \end{equation}
  This requires a decision whether to map $s$ to $\nil$ or not, leading
  to:
  
\noindent {\bf 4.} {\it Species over preorders.} There is a restriction species
  $\Pre$ over $ \set$ where $\Pre[X]$ is the set of preorders
  on $X$. We require our species $\sS$ to have a
  natural transformation $\pi : \sS \pil \Pre$.
  To each $s \in \sS[X]$ we get a preorder $\pi(s)$ on the set $X$.
  This preorder
  determines  whether the image of $s$ in \eqref{eq:intro-rest} is $(s_A,s_B)$
  or $\nil$ (see Subsection \ref{subsec:intro-bimonoid}.{4.}).

  \medskip
\subsection{New Hopf algebras introduced}
From the above we get several new classes of Hopf algebras
based on the following.

\noindent {\bf A.} {\it Avoidance of  permutations.}
Let $S = \cup_{n \geq 0} S_n$ be the union of permutations of all sizes.
  Let $A \sus S$ a set of permutations which have {\it no global descents},
  and $S_{/A} \sus S$ the set of $A$-avoiding permutations.
  We show the associated
  vector space $\kk S_{/A}$ is a quotient Hopf algebra of the
  Malvenuto-Reutenauer (MR) Hopf algebra, Theorem \ref{thm:perm-A}.
  Examples include:
  \begin{itemize}
  \item $A = \{ 213 \}$ gives the Loday-Ronco Hopf algebra,
  \item $A = \{213, 132\}$ gives the Hopf algebra of quasi-symmetric functions,
  \item $A = \{ 3142, 2413 \}$ gives the Hopf algebra
    ${\mathcal W}{\mathcal {PP}}$ of \cite{Fo-PP}.
  \end{itemize}

  \noindent {\bf B.} {\it Parking filtrations.} J-C. Novelli and J-Y.Thibon
  \cite{ThNo1} give a Hopf algebra of parking functions
  having the MR-algebra as a quotient Hopf algebra. We introduce a larger
  ``master'' Hopf algebra of pairs of parking filtrations,
  Section \ref{sec:parking},
  with the Hopf algebra of parking functions as a
  sub-Hopf algebra.
  
\noindent  {\bf C.} {\it Pairs of preorders.}
  We introduce three large Hopf algebras coming from bimonoid
  species $\sB$ where $\sB[X]$ consists of pairs of preorders $(P,Q)$ on
  $X$, Sections \ref{sec:pairs} and  \ref{sec:HApairs}.
  Such pairs come in four classes $\cc, \nc, \cn, \nn$, each giving
  a large ``master'' Hopf algebra.

  \medskip
  \subsection{Our approach to get bimonoid species}
  \label{subsec:intro-bimonoid}
We describe in more detail our approach to
{\bf 1, 2, 3, 4} from Subsection \ref{subsec:intro-new}.

\noindent {\bf 1a.} For sets $X$ and $Y$ in the category $\setmm $ a morphism
$f : X \pil Y$ is a map of sets $X \mto{f} \Hom(Y, \NN)$, associating
for each $x \in X$ a multisubset of $Y$. Such a map may be represented
by an $|X| \times |Y|$-matrix with entries in non-negative integers.
By transposing the matrix we get a dual map $Df : Y \pil X$.
The empty set is a null object in this category.

Combinatorial bimonoid species are then categorical bimonoid species
$\setx \pil \setmm$. To get antipodes and combinatorial Hopf species one
just needs to extend to $\setx \pil \setz$.

\noindent {\bf 1b.}  For species $\sS$ in vector spaces, a coproduct
\[ \Delta : \sS[X] \pil \sS[A] \times \sS[B] \] usually takes a basis element
to
a single pair of basis elements. But a product
\begin{equation} \label{eq:intro-prod}
 \mu :  \sS[A] \times \sS[B] \mto{} \sS[X]
\end{equation}
often takes a pair of basis
elements $(s_A,s_B)$ to a sum of basis elements. However this sum can
usually be identified with a set (or multiset) of basis elements, so
we can work with species $\setx \pil \setmm $.

\medskip
\noindent {\bf 2.}
In the category $\setmm$ (in contrast to vector spaces)
the product map $\mu$ in \eqref{eq:intro-prod}
may be dualized to a coproduct map (without dualizing the objects):
\[ \Delta^\prime : \sS[X] \mto{} \sS[A] \times \sS[B], \]

Thus if $\sS $ is a bimonoid, we get two coproducts $\Delta $ and
$\Delta^\prime$. One may then turn things around, start from
$\Delta $ and $\Delta^\prime  $ and inquire when do we get a bimonoid by
dualizing the latter. For restriction comonoids this is the notion of
{\it intertwined} coproducts, Definition \ref{def:species-inter}.

\medskip
\noindent {\bf 3.} Often our species $\sS$ has restrictions.
For an injection $Y \hookrightarrow X$ we have restriction maps
\[ \sS[X] \pil \sS[Y], \quad s \mapsto s_Y, \]
and the coproduct
\[ \sS[X] \pil \sS[A] \times \sS[B], \quad s \mapsto (s_A, s_B)
\text { or } \nil,\]
where $X = A \sqcup B$. This is the case for the Connes-Kreimer (CK) Hopf
algebra and the Malvenuto-Reutenauer Hopf algebra. For instance in
the CK-case a tree is mapped to a restriction pair $(t_A,t_B)$ if
$(A,B)$ is an admissible cut, and otherwise $t$ maps to $\nil $.
What must be decided is {\it when} to map to $\nil$.

\medskip
\noindent {\bf 4.} For this we introduce species over preorders.
Denote by $\setii$ the category of sets with coinjections $X \pil Y$ as
morphisms (i.e. $Y \hookrightarrow X$ is an injection). 
$\Pre : \setii \pil \set $ is the species with $\Pre[X]$ all preorders
on the set $X$. We work with restriction species  $\sS : \setii \pil \set $
with a natural transformation $\pi : \sS \pil \Pre $.
So for every $s \in  \sS[X]$ we have a preorder $\pi(s) \in \Pre[X]$. Then
$s$ maps to the restriction pair $(s_A,s_B)$ iff $(A,B)$ is a {\it cut}
for $\pi(s)$, otherwise $s$ maps to $\nil $. By a cut we mean that
$A$ is a down-set in $\pi(s)$ and $B$ its complement up-set.

A restriction species over preorders then gives a coproduct on species.
If there are {\it two} structures as species over preorders
$\pi_1, \pi_2 : \sS \pil \Pre$, we get two coproducts $\Delta^1$
and $\Delta^2$. It is usually simple and direct to verify if they
are intertwined, Subsection \ref{subsec:intertwined-species},
thus giving two bimonoid species by dualizing either
of these coproducts.

\subsection{Combinatorial Hopf algebras}
At the start of the introduction we stated that a combinatorial Hopf
algebra (CHA) comes with a distinguished basis. This is by many examples.
However we mention that in well-known
definitions of CHA's in the literature, bases are not explicitly required.
\cite{ABS} defines a CHA as a graded Hopf algebra with a distinguished
character. \cite{LR-CHA} requires the coalgebra to be isomorphic to
a cofree coalgebra, either cocommutative or coassociative.
A definition explicitly requiring a basis is in \cite[Def.3.3]{CEMM}.
We do not venture a definition of CHA's but note that our bimonoid
species being based on sets, gives a basis for the associated bi-algebras
or Hopf algebras.

\medskip
\subsection{Organization of the article.}

{\noindent \bf Part I: Restriction species in $\setmm$ 
  with preorder cuts}

Section \ref{sec:setcat} introduces the category $\setmm$ and
the notion of partial
pull-back diagrams for partial maps in this category. Section
\ref{sec:species}
    recalls the notions of i. species, ii. species with restrictions and
    iii. bimonoid species in this category.
Section \ref{sec:pre} recalls basic notions for preorders. We consider 
  the notion of global descents for pairs of total orders, and
  the notion of refinement of preorders.
  
  Section \ref{sec:bimonoid} gives the essential new idea of
  a restriction species over $\Pre$. It gives
  rise to a comonoid, and with two such structures we get two coproducts.
  The essential requirement for getting bimonoid species
  is that these coproducts are {\it intertwined}.
In Section \ref{sec:avoid}, for 
a subspecies $\sA$ of a restriction species $\sS$ we introduce the
  $\sA$-avoiding sub-species $\sSA$, and investigate when two intertwining
  coproducts for $\sS$ are still intertwined for $\sSA$.

  \medskip
  {\noindent \bf Part II: Constructions of Hopf algebras.}

  Section \ref{sec:respre} gives 
how Hopf algebras of polynomials, of tensors,
 of graphs and of preorders,
 come from restriction species over preorders.
 
  Section \ref{sec:perm} constructs a wealth of quotient Hopf algebras of the 
  Malvenuto-Reutenauer (MR) Hopf algebra. The MR-algebra may
  be viewed in two ways
  as coming from a restriction species over preorders. We give a main
  general consequence, Theorem \ref{thm:perm-A}: For
  sets of permutations without global descents, the avoiding
  subspecies of this gives quotient Hopf algebras of the MR Hopf algebra.
   This drops almost immediately out of our setting.
  An example case is the Loday-Ronco Hopf algebra.
  
Section \ref{sec:parking}
introduces parking filtrations and
  a new large master Hopf algebra consisting of pairs of
  parking filtrations. It has the Hopf algebra of parking functions \cite{ThNo1}
  as a subalgebra.

  \medskip
  {\noindent \bf Part III: Hopf algebras of pairs of preorders.}

Section \ref{sec:pairs}
considers species $\sS$ with $\sS[X]$ consisting of pairs
  of preorders $(P,Q)$ on $X$. We investigate when the resulting
  two natural projection
  structures as species over preorders give two intertwined
  coproducts. There are four basic types of such pairs: $\cc, \nc, \cn, \nn$,
  giving four species of preorders $\sCC, \sNC, \sCN$ and $\sNN$.
  These give four large master Hopf algebras.
  Section \ref{sec:HApairs}  describes
  the four basic types of pairs of preorders in more detail,
  and give examples of how we get other Hopf algebras by avoidance.

\medskip
\noindent {\it Acknowledgment.} I thank Dominique Manchon for
hosting and partially supporting my stay at Université de Clermont-Ferrand
during the fall semester
2021, where this work was initiated. I am grateful for feedback
on this article. I also thank Lorentz Meltzers
h{\o}yskolefond for partially supporting the stay.

\medskip
\noindent {\it Declarations of interests:} None.

\medskip \medskip
{\noindent \large \bf Part I:
  Restriction species in $\setmm$ with preorder cuts}

\section{Bimonoid  species in sets with
  multimaps}
\label{sec:setcat}

We give the category of sets with {\it multimaps}.
In this setting one has a null object, and one may
dualize maps. This brings us closer to vector spaces, while
our objects are still sets. We introduce the notion of {\it partial
pullback diagram} in this setting.

\subsection{The category of sets with multimaps}

\label{subsec:setmm}
Let $\NN = \{0,1, \ldots \}$ be the natural numbers.

\begin{definition}
  Let $X$ and $Y$ be sets. A {\it multimap} $f : X \promap Y$ 
  is a set map $X \pil \Hom(Y,\NN)$.
\end{definition}

This associates to each $x \in X$ a multiset in $Y$.
A map $\tau : Y \pil \NN$ sending  $y$ to $n_y$ is written
$\sum_{y \in Y} n_y y$. 
Two multimaps $X \overset{f}{\promap} Y$ and $Y \overset{g}{\promap} Z$
may be composed as follows. If 
\[ x \mato{f} \sum_{y \in Y} n^x_y y, \quad y \mato{g} \sum_{z \in Z}
  m^y_z z, \]
the composite sends
\[ x \xmapsto{g \circ f} \sum_{z \in Z} \sum_{y \in Y} n^x_y m^y_z z. \]
We can represent $f$ and $g$ by matrices and then $g \circ f$ is just
matrix multiplication.
To give a multimap $f : X \promap Y$
is equivalent to give a set map $X \times Y \pil \NN$,
or a multisubset of $X \times Y$. The symmetry $X \times Y \iso Y \times X$,
shows that we equivalently get a dual multimap $Df : Y \promap X$
(whose matrix is the transpose of $f$).
The multimap $f : X \promap Y$ is an ordinary map if for each $x$ the sum
$\sum_{y \in Y} n^x_y = 1$.

\begin{definition} The map $f$ is a {\it promap} if each 
$n^x_y$ is either $0$ or $1$.
This means that $f$ factors as $X \pil \Hom(Y, \{0,1 \}) \pil \Hom(Y, \NN)$.
The image of $x \in X$ may then be considered to be
a subset of $Y$.
The map $f$ is a {\it partial map} if for each $x$ the sum
$\sum_{y \in Y} n^x_y$ is $0$ or $1$
\end{definition}

The following is easily verified as multimaps are given
by matrices of non-negative integers.
\begin{lemma} \label{lem:setcat-bij}
  A multimap $f : X \pil Y$ is an isomorphism iff it is an ordinary
  map which is a bijection.
\end{lemma}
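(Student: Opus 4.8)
The plan is to argue entirely with the matrix description of multimaps. Write the multimap $f : X \promap Y$ as a matrix $M$ with rows indexed by $X$ and columns by $Y$, entries in $\NN$; an inverse, should one exist, is a multimap $g : Y \promap X$ with matrix $N$ satisfying $MN = I_X$ and $NM = I_Y$, since composition of multimaps is matrix multiplication and the identity multimap has the identity matrix. The ``if'' direction is immediate: a bijective ordinary map $f : X \to Y$ has matrix with exactly one $1$ in each row and, by bijectivity, exactly one $1$ in each column, i.e. a permutation matrix; its transpose is again a $0$--$1$ matrix, hence a multimap, and is a two-sided inverse. So the content is the ``only if'' direction: show that $MN = I_X$ and $NM = I_Y$ over $\NN$ force $M$ to be a permutation matrix.

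First I would rule out zero rows: if some row of $M$ vanished, the corresponding diagonal entry of $MN$ would be $0$, not $1$; likewise for $N$. Hence every row sum $r_x := \sum_y M_{x,y}$ of $M$ and every row sum $c_y := \sum_x N_{y,x}$ of $N$ is $\ge 1$. Next, multiply the identity $MN = I_X$ on the right by the all-ones column vector $\mathbf 1_X$: since $(MN)\mathbf 1_X = \mathbf 1_X$ and $(MN)\mathbf 1_X = M(N\mathbf 1_X)$ with $N\mathbf 1_X$ the column $(c_y)_y$, this reads $\sum_y M_{x,y} c_y = 1$ for every $x \in X$. Because all $c_y \ge 1$ we get $1 = \sum_y M_{x,y} c_y \ge \sum_y M_{x,y} = r_x \ge 1$, so equality holds throughout, forcing $r_x = 1$ and $M_{x,y}(c_y - 1) = 0$ for every $y$. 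Thus each row of $M$ has a single nonzero entry, equal to $1$: $M$ is the matrix of an ordinary map $\phi : X \to Y$. The symmetric argument applied to $NM = I_Y$ shows $N$ is the matrix of an ordinary map $\psi : Y \to X$. Finally $MN = I_X$ and $NM = I_Y$ translate into $\psi \circ \phi = \id_X$ and $\phi \circ \psi = \id_Y$, so $\phi$ is a bijection and $f$ is an ordinary map that is a bijection, by Lemma \ref{lem:setcat-bij}'s statement.

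The only real step is the implication $\sum_y M_{x,y} c_y = 1 \Rightarrow r_x = 1$, and the mild subtlety is that one genuinely uses both identities: $MN = I_X$ supplies the equation $\sum_y M_{x,y} c_y = 1$, while $NM = I_Y$ is what guarantees $c_y \ge 1$ (no zero rows in $N$); without that lower bound a single row of $M$ could a priori spread its mass over columns $y$ with $c_y = 0$. Beyond this bookkeeping there is no obstacle, and in particular $|X| = |Y|$ need not be assumed --- it drops out, since $\phi$ is forced to be a bijection.
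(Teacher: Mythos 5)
Your proof is correct: the no-zero-rows observation combined with the row-sum identity $\sum_y M_{x,y}c_y=1$ and the bound $c_y\ge 1$ cleanly forces $M$ to be the matrix of an ordinary map, and the symmetric argument plus the two inverse identities finishes the job. The paper gives no proof at all, remarking only that the lemma ``is easily verified as multimaps are given by matrices of non-negative integers,'' so your argument is precisely the verification the author had in mind, carried out in full and with the one genuinely non-obvious point (that both identities $MN=I_X$ and $NM=I_Y$ are needed) correctly identified.
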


To give a promap $f$ is equivalent to
give a map $X \times Y \pil \{0,1\}$, or a subset of $X \times Y$ which is
  simply a relation between $X$ and
  $Y$. The dual map $Df : Y \promap X$ is then also a promap.
  The composition of two
  promaps will in general be a multimap and not a promap. However the
  composition of a promap and a partial map is a promap, and the
  composition of two partial maps is a partial map. 

Let $\setmm $ be the category whose objects are sets and whose morphisms
are multimaps. Note that it is equivalent to the category of free
commutative monoids via the association $X \rightsquigarrow \NN X$.
The empty set is both an initial object and a terminal
object in $\setmm$, so it is a null object. When the empty set is
considered to be in $\setmm$ we denote it as $\nill$.

This category $\setmm$ has all finite limits
and colimits. Both the product and coproduct are disjoint unions
of sets
\[ X \prod Y =  X \coprod Y = (X \times \{1\}) \cup (Y \times \{2\}). \]

It is furthermore a symmetric monoidal category with product
\[ X \times Y = \{ (x,y) \,| \, x \in X, y \in Y \} \]
where the set $\{* \}$ with one element is the unit.
The empty set, the null object
is absorbing for this product: $X \times \nill = \nill$.
If $X \mto{f} X^\prime$ and $Y \mto{g} Y^\prime$ are multimaps, we
get the multimap $X \times Y \mto{f \times g} X^\prime \times Y^\prime$ by
\begin{align*}
 X \times Y  \mto{f \times g}  \Hom(X^\prime, \NN) \times \Hom(Y^\prime, \NN)
  \pil & \Hom(X^\prime \times Y^\prime, \NN \times \NN)  \\
  \pil & \Hom(X^\prime \times Y^\prime, \NN),
\end{align*}
where the last map is induced by multiplication in $\NN$. 

Our point of view on Hopf species will be combinatorial rather than
algebraic. The category $\set$ is however not adequate, mainly because
it does not allow dualization of maps, and also does not have
a null object. The category $\setmm$ does however.
It is closer to the category
of vector spaces, while still being based on sets.

\medskip
\subsection{The functor to vector spaces}
Given a field $\kk$,
there is a functor to vector spaces over $\kk$: 
\[ F : \setmm \pil \vect \]
It sends a set $X$ to the free vector space $\kk X$. It sends a multimap
given by $X \overset{f}{\promap} Y$
sending $x \overset{f}{\mapsto} \sum_{y \in Y} n^x_y y$
to the linear map $F(f)$ sending
$x \mapsto \sum_{y \in Y} n^x_y y$. The duality $D$ on $\setmm$
and duality $(-)^*$ on vector spaces correspond so we have a commutative
diagram:
\[ \xymatrix{ \setmm \ar^{F}[r] \ar_{D}[d] & \vect \ar^{*}[d] \\ 
   \setmm \ar^{F}[r]  & \vect}.
\]

In \cite[Subsec.6.9]{AM2} there is also a contravariant functor
\[ G : (\setmm)^\op  \pil \vect \]
sending $X \pil \kk^X = \Hom(\kk X, \kk)$. (They define this functor on the
category $\set$ instead of $\setmm$.) This functor is the composition
$G = F \circ D = (-)^* \circ F$.

\subsection{Dualizing diagrams in $\setmm$}
Suppose we have given a diagram in $\setmm$ of multimaps:

\begin{equation} \label{eq:setcat-xyzw}
  \xymatrix{ X \ar^{\beta}@{->}[d] \ar^{\alpha}[r] &
    Y \ar@{->}[d]^{\gamma} \\
    Z \ar@{->}[r]^{\delta}  & W }\end{equation}
Dualizing respectively the horizontal and vertical maps we get diagrams:
\begin{equation} \label{eq:setcat-xyzwD}
\xymatrix{ X \ar[d]^{\beta} & Y \ar[d]^{\gamma} \ar[l]_{D\alpha} \\
  Z & W, \ar[l]_{D\delta} } \qquad
\xymatrix{ X \ar[r]^{\alpha} & Y \\
  Z \ar[u]_{D \beta} \ar[r]^{\delta} & W. \ar[u]_{D \gamma} }
\end{equation}

Recall that if $f : X \pil Y$ is a promap, each image $f(x)$
may be considered an element of the power set $P(Y)$. 

\begin{proposition} \label{pro:setcat-comm}
  Suppose all of $\alpha, \beta, \gamma, \delta$
  are promaps. The left diagram of \eqref{eq:setcat-xyzwD} is commutative
  iff in the diagram \eqref{eq:setcat-xyzw}
  for any $z \in Z$ and $y \in Y$, the
  two intersections (which are respectively in the power sets $P(W)$
  and $P(X)$): 
  \[ \delta(z) \cap \gamma(y), \quad D\beta(z) \cap D\alpha(y) \]
  have the same cardinalities.
\end{proposition}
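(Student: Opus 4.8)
The plan is to compute both sides of the claimed equivalence explicitly in terms of matrix entries, exploiting the fact that all four multimaps are promaps, so their matrices have entries in $\{0,1\}$. First I would fix the notation: write $P(z,y)$ for the $(z,y)$-entry of the matrix of the composite $\gamma \circ \alpha$ (a path $X \to Y$ then... wait, $\alpha : X \to Y$ and $\delta : Z \to W$, $\beta : X \to Z$, $\gamma : Y \to W$), so the composite $D\delta \circ D\beta$ going $X \to Z \to W$... I will instead track the two composites $X \to W$ through the square \eqref{eq:setcat-xyzw}: the top-right path $\gamma \circ \alpha$ and the left-bottom path $\delta \circ \beta$. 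The left diagram of \eqref{eq:setcat-xyzwD} is obtained by dualizing the horizontal arrows, giving maps $D\alpha : Y \to X$, $D\delta : W \to Z$, with the vertical $\beta, \gamma$ unchanged; its commutativity asserts $\beta \circ D\alpha = D\delta \circ \gamma$ as multimaps $Y \to Z$.

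Next I would translate ``$\beta \circ D\alpha = D\delta \circ \gamma$'' into an equality of matrix entries. Using the composition rule for multimaps recalled just before the proposition, the $(y,z)$-entry of $\beta \circ D\alpha$ is $\sum_{x \in X} (D\alpha)^y_x \beta^x_z = \sum_{x\in X} \alpha^x_y \beta^x_z = |\{x \in X : x \in D\alpha(y) \text{ and } x \in D\beta(z)\}| = |D\alpha(y) \cap D\beta(z)|$, where I used that $\alpha, \beta$ are promaps so each factor is $0$ or $1$, hence the sum literally counts a set. Symmetrically, the $(y,z)$-entry of $D\delta \circ \gamma$ is $\sum_{w \in W} \gamma^y_w (D\delta)^w_z = \sum_{w\in W} \gamma^y_w \delta^z_w = |\gamma(y) \cap \delta(z)|$. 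So the commutativity of the left square is \emph{precisely} the equality, for all $y \in Y$ and $z \in Z$, of the two cardinalities $|\delta(z) \cap \gamma(y)|$ and $|D\beta(z) \cap D\alpha(y)|$, which is the stated condition. That finishes both directions at once, since I have an ``iff'' at the level of each matrix entry.

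The key steps in order: (1) identify which maps are being equated when the horizontal arrows of \eqref{eq:setcat-xyzw} are dualized, namely $\beta \circ D\alpha = D\delta \circ \gamma : Y \to Z$; (2) write out the $(y,z)$-matrix entry of each side via the multimap composition formula; (3) invoke the promap hypothesis to recognize each entry as the cardinality of an honest intersection of subsets — on the $X$ side $D\alpha(y) \cap D\beta(z) \subseteq X$, on the $W$ side $\gamma(y) \cap \delta(z) \subseteq W$; (4) conclude that entrywise equality of the two matrices is the same as the stated cardinality condition for all $z,y$. I expect the main (and really the only) subtlety to be bookkeeping: keeping straight which arrow gets dualized, in which order composites are taken, and matching $(D\alpha)^y_x$ with $\alpha^x_y$ and $(D\delta)^w_z$ with $\delta^z_w$ — a transpose-tracking exercise. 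There is no genuine obstacle, as the promap hypothesis collapses the multiset-composition sum into a plain set-cardinality count.
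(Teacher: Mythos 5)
Your proof is correct and is essentially the paper's argument: both identify the coefficient of $z$ in $\beta\circ D\alpha(y)$ with $|D\alpha(y)\cap D\beta(z)|$ and the coefficient of $z$ in $D\delta\circ\gamma(y)$ with $|\gamma(y)\cap\delta(z)|$, using the promap hypothesis to turn the composition sums into set cardinalities. The only difference is presentational — you work directly with matrix entries and transposes, while the paper decomposes $\gamma(y)$ and $D\alpha(y)$ into subsets $W^1\cup W^2$ and $X^1\cup X^2$ — but the computation is the same.
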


\begin{proof}
  We have chosen $z \in Z$ and $y \in Y$. 
Consider the left diagram of \eqref{eq:setcat-xyzwD}. Assume it commutes.
  Let
  \[ D\delta \circ \gamma(y) = \beta \circ D\alpha(y) = \sum_{u \in Z} n_u u. \]
  Let $\gamma(y)$ considered as a subset of $W$ be $W^1 \cup W^2$ where
  $W^1$ consists of those $w$'s
  such that $z \in D\delta(w)$ and $W^2$ of those $w$'s such
  that $z \not \in D\delta(w)$.  Let furthermore $D\alpha(y)$
  considered as a subset of $X$ be $X^1 \cup X^2$
  where the images by $\beta$ of the elements in $X^1$ contain $z$ and
  the images of elements in $X^2$ do not contain $z$.
  Since all maps are promaps, the cardinalities $|W^1| = n_z = |X^1|$ and

  \[ W^1 = \delta(z) \cap \gamma(y), \quad X^1 = D\alpha(y) \cap D\beta(z).
  \]

  Conversely assume these sets always have the same cardinality, and fix $y$.
  For each $z$ let $n_z = |W^1| = |X^1|$.
  Then we see that
  \[ D\delta \circ \gamma(y) = \sum n_z z = \beta \circ D\alpha(y). \]
\end{proof}

  Suppose the maps $\alpha, \beta, \gamma, \delta$ are partial maps.
  Let $Y^\prime \sus Y$ be the subset where $\gamma $ is a set map.
  So $Y^\prime$ consists of those $y \in Y$ such that
  $\gamma(y) \neq \nil$. Similarly $Z^\prime \sus Z$ is the subset where
  $\delta$ is a set map.
  Furthermore let $X^{\prime \prime} \sus X$ be the subset where
  both $\alpha $ and $\beta$ are set maps, so $X^{\prime \prime}$ consists
  of those $x \in X$ such that both $\alpha(x), \beta(x) \neq \nil$.

\begin{definition} \label{def:setcat-pb}
  The diagram \eqref{eq:setcat-xyzw} is
  a {\it partial pullback diagram} if:
  \begin{itemize}
  \item[1.]  $X^{\prime \prime} \mto{\alpha} Y$ factors through $Y^\prime $ and
    $X^{\prime \prime} \mto{\beta} Z$ factors through $Z^\prime$
    \item[2.] The following diagram
  \begin{equation} \label{eq:setcat-pbp}
  \xymatrix{ X^{\prime \prime} \ar^{\beta}@{->}[d] \ar^{\alpha}[r] &
    Y^\prime \ar@{->}[d]^{\gamma} \\
    Z^\prime \ar@{->}[r]^{\delta}  & W }\end{equation}
is a pullback diagram in $\set$. (In particular the above \eqref{eq:setcat-pbp}
is a commutative diagram.)
\end{itemize}
\end{definition}

Note that the original  partial pullback diagram \eqref{eq:setcat-xyzw}
may not be commutative. It can happen that:
\begin{itemize}
  \item $\alpha(x) = \nil $ while
  $\delta \circ \beta(x) \neq \nil$, or
\item $\beta(x) = \nil$ while $\gamma \circ \alpha(x) \neq \nil$.
\end{itemize}

\begin{corollary} \label{cor:setcat-pullback}
Suppose $\alpha, \beta, \gamma, \delta$ are partial maps.
The left diagram of \eqref{eq:setcat-xyzwD} is commutative
iff
the   diagram \eqref{eq:setcat-xyzw} is
a partial pullback diagram.
\end{corollary}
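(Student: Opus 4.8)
The plan is to reduce Corollary \ref{cor:setcat-pullback} to Proposition \ref{pro:setcat-comm} by a careful bookkeeping of where the partial maps are defined, and then to translate the cardinality condition of the Proposition into the pullback condition of Definition \ref{def:setcat-pb}. Since $\alpha,\beta,\gamma,\delta$ are partial maps, each of the sets $\delta(z)\cap\gamma(y)$ and $D\beta(z)\cap D\alpha(y)$ appearing in the Proposition has cardinality $0$ or $1$: indeed $\gamma(y)$ is either empty (if $y\notin Y'$) or a singleton $\{\gamma(y)\}$, and similarly for $\delta(z)$, $D\alpha(y)$, $D\beta(z)$. So the content of Proposition \ref{pro:setcat-comm} in this case is: for all $y\in Y$, $z\in Z$,
\[
\bigl(\gamma(y)\neq\nil,\ \delta(z)\neq\nil,\ \delta(z)=\gamma(y)\bigr)
\iff
\bigl(\exists\, x\in X:\ D\alpha(y)=\{x\}=D\beta(z)\bigr).
\]
The right-hand side says exactly: there is $x\in X$ with $\alpha(x)\ni y$ (equivalently $\alpha(x)=\{y\}$, as $\alpha$ is partial) and $\beta(x)\ni z$; that is, $x\in X''$, $\alpha(x)=y$, $\beta(x)=z$.

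First I would spell out the forward direction of Corollary \ref{cor:setcat-pullback}. Assuming the left diagram of \eqref{eq:setcat-xyzwD} commutes, Proposition \ref{pro:setcat-comm} gives the displayed biconditional above. To get condition 1 of Definition \ref{def:setcat-pb}: take $x\in X''$, so $\alpha(x)=\{y\}$ and $\beta(x)=\{z\}$ for unique $y,z$; the right-hand side of the biconditional holds for this $(y,z)$, hence so does the left-hand side, giving $\gamma(y)\neq\nil$ and $\delta(z)\neq\nil$, i.e. $y\in Y'$ and $z\in Z'$ — this is precisely condition 1. For condition 2, note that on $X''$ the maps $\alpha,\beta$ are honest set maps landing in $Y'$, $Z'$ respectively by condition 1, and the left-hand side of the biconditional forces $\gamma(\alpha(x))=\delta(\beta(x))$, so \eqref{eq:setcat-pbp} commutes in $\set$. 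To see it is a pullback: a pair $(y,z)\in Y'\times Z'$ with $\gamma(y)=\delta(z)$ satisfies the left-hand side of the biconditional, hence there is $x\in X''$ with $\alpha(x)=y$, $\beta(x)=z$; and such $x$ is unique because the right-hand side, read as an existence statement about $x$, is pinned down by $D\alpha(y)$ and $D\beta(z)$ being the single element $x$ — two such $x,x'$ would both have $\alpha$-image $y$, so $D\alpha(y)\supseteq\{x,x'\}$, forcing $x=x'$ since $D\alpha(y)$ has at most one element. Thus \eqref{eq:setcat-pbp} is a pullback and the diagram is a partial pullback diagram.

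Conversely, assume \eqref{eq:setcat-xyzw} is a partial pullback diagram; I must verify the biconditional, then invoke Proposition \ref{pro:setcat-comm}. Given $y,z$ with $\gamma(y)\neq\nil$, $\delta(z)\neq\nil$ and $\gamma(y)=\delta(z)$ in $W$, we have $(y,z)\in Y'\times_W Z'$, and the pullback property of \eqref{eq:setcat-pbp} supplies a (unique) $x\in X''$ with $\alpha(x)=y$, $\beta(x)=z$, giving the right-hand side. For the reverse: given $x\in X''$ with $\alpha(x)=y$, $\beta(x)=z$, condition 1 of Definition \ref{def:setcat-pb} gives $y\in Y'$, $z\in Z'$, and commutativity of \eqref{eq:setcat-pbp} gives $\gamma(y)=\delta(z)$ — this is the left-hand side. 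Hence the biconditional holds, the cardinality hypothesis of Proposition \ref{pro:setcat-comm} is met, and the left diagram of \eqref{eq:setcat-xyzwD} commutes.

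The only real subtlety — and the step I would be most careful about — is the uniqueness of the lift $x$ and its compatibility between the two formulations: Proposition \ref{pro:setcat-comm} only asserts equality of \emph{cardinalities} of $\delta(z)\cap\gamma(y)$ and $D\beta(z)\cap D\alpha(y)$, whereas the pullback condition is about an actual bijection $X''\supseteq \alpha^{-1}(y)\cap\beta^{-1}(z)\leftrightarrow\{(y,z)\}$. In the partial-map setting these coincide because every relevant set is forced to have at most one element, so "same cardinality" and "bijectively correspond" are the same statement; I would make this explicit at the outset (as in the displayed reformulation) so that the rest is a mechanical check. Nothing deeper is needed; there is no additional obstacle beyond this bookkeeping.
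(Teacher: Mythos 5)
Your overall route is the same as the paper's: reduce the corollary to the cardinality statement of Proposition \ref{pro:setcat-comm} and then translate ``equal cardinalities'' into the partial pullback condition of Definition \ref{def:setcat-pb}, using that $\delta(z)\cap\gamma(y)$ automatically has at most one element when $\gamma,\delta$ are partial. The paper's Part a and Part b are exactly your forward and converse directions.

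There is, however, one false claim in your bookkeeping. You assert that $D\alpha(y)$ and $D\beta(z)$ are each empty or singletons ``as $\alpha$ is partial''. This is not so: the dual of a partial map is in general not a partial map, since $D\alpha(y)=\{x \mid \alpha(x)= y\}$ is a fibre and can be arbitrarily large (take $\alpha$ constant). Consequently your displayed biconditional, whose right-hand side reads $D\alpha(y)=\{x\}=D\beta(z)$, is not a faithful reformulation of the condition in Proposition \ref{pro:setcat-comm} (``both intersections nonempty'' is a priori weaker than ``equal cardinalities''), and your uniqueness argument in the forward direction --- ``forcing $x=x'$ since $D\alpha(y)$ has at most one element'' --- rests on this false premise. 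What is actually forced to have at most one element is the \emph{intersection} $D\alpha(y)\cap D\beta(z)$, and only under one of the two hypotheses being traded: under commutativity of the dual diagram it has cardinality $|\delta(z)\cap\gamma(y)|\le 1$ by Proposition \ref{pro:setcat-comm}, and under the partial pullback hypothesis it is exactly the set of $x\in X''$ with $\alpha(x)=y$, $\beta(x)=z$, which the pullback property of \eqref{eq:setcat-pbp} makes empty or a singleton. With that substitution (which you in fact set up correctly at the outset and then drift away from) your argument closes and coincides with the paper's, which derives uniqueness of $x$ precisely from the cardinality of $D\beta(z)\cap D\alpha(y)$ rather than of the individual fibres.
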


\begin{proof}
 {\bf Part a.} Suppose the left diagram of \eqref{eq:setcat-xyzwD} commutes.
 Pick $x \in X$. 

 \noindent 1. Let $y = \alpha(x)$ and $z = \beta(x)$ both be nonzero.
  Since then $x \in D\beta(z) \cap D\alpha(y)$,
  the intersection $\delta(z) \cap \gamma(y)$ has cardinality $\geq 1$.
  Thus $\delta(z) = \gamma(y)$ is nonzero,
  and $\delta \circ \beta(x) =  \gamma \circ \alpha (x)$.

\noindent  2. If $\delta(z) = \gamma(y)$ is nonzero, the cardinality of their
intersection is one. So there is a unique $x$ in $D\beta(z) \cap D\alpha(y)$,
and so a unique $x$ such that $y = \alpha(x)$ and $z = \beta(x)$.

\medskip
\noindent{\bf Part b.} Conversely assume the diagram \eqref{eq:setcat-xyzw} is a
partial pullback diagram. Let $z \in Z$ and $y \in Y$.
Then $\delta(z) \cap \gamma(y)$ has cardinality $0$ or $1$.
If it is one, then since \eqref{eq:setcat-pbp} is
a pullback diagram, there is a unique $x \in X^{\prime \prime}$ such that
$z = \beta(x)$ and $y = \alpha(x)$. Then this $x$ is unique in
$D \beta(z) \cap D\alpha(y)$ and so this intersection also has cardinality
one.

If $\delta(z) \cap \gamma(y)$ has cardinality $0$, there can be no
$x$ such that $z = \beta(x)$ and $y = \alpha(x)$ since \eqref{eq:setcat-pbp}
is a partial pullback diagram. Hence $D \beta(z) \cap D\alpha(y)$ also has
cardinality $0$.
\end{proof}

\section{Species}
\label{sec:species}
When considering species in a setting relating to combinatorial
Hopf algebras, one normally has a species in vector spaces.
However in the combinatorial setting these vector spaces
come with a distinguished basis (or even several). This suggests
it could be more natural to consider bimonoid or Hopf species in the category
of sets. However this is too restrictive, as the notion of comonoid is not 
even defined for species in the category of sets.
The setting of $\setmm$, the category
of sets with {\it multimaps} is the fully satisfactory setting: One
has a null object, enabling comonoids, and one may dualize maps.

\subsection{Species in the category $\setmm$}
Let $\setx$ be the category of sets with bijections as morphism.
A species in a category $\CC$ is a functor
$\sP : \setx \pil \CC$. Species becomes again a category with natural
transformations as morphisms. We shall consider species where
$\CC$ is either $\set, \setmm$ or $\vect$. When $\CC$ is a monoidal category
with products, we can make species into a monoidal category.
For $\setmm$ and $\vect$ the monoidal products are respectively the
cartesian product $\times $ and the tensor product $\te$.

For $\setmm$ the monoidal product of species $\sP$ and $\sQ$ is defined as:
\[ (\sP \cdot \sQ)[X] = \coprod_{X = S \scup T} \sP(S) \times \sQ(T)\]
and the unit as:
\[ \unit[X] = \begin{cases} \nil, & X \neq \emptyset \\
    \{* \}, &  X = \emptyset \end{cases}. \]
The categories of species over $\set, \setmm$ and $\vect$
are denoted as
\[ \sps, \,\, \spsm, \,\, \spvv .\]
As usual in a monoidal category we then have the notion of a {\it monoid}.
It consists of natural transformations
\[ \mu : \sP \cdot \sP \pil \sP, \quad \iota : \unit \pil \sP. \]
For $\spsm$ the multiplication map  $\mu$ may for $X = S \scup T$
be decomposed as multimaps
\[ \mu_{S,T} : \sP[S] \times \sP[T] \promap \sP[X]. \]
The unit $i$ also has a non-trivial multimap 
\[ \iota_\emptyset : \{* \} \promap \sP[\emptyset] \]
which may be identified as an element of $\Hom(\sP[\emptyset], \NN)$. 
These maps fulfill axioms \cite[8.2.1]{AgMa}, the most significant being that
$\mu$ is associative, corresponding to the commutativity of the following
diagram:
\[ \xymatrix{ \sP[R] \times \sP[S] \times \sP[T] \ar[r]^{\hskip 2mm \id \times \mu_{S,T}}
    \ar[d]_{\mu_{R,S} \times \id} &
    \sP[R] \times \sP[S \scup T]  \ar[d]^{\mu_{R,S \scup T}} \\
    \sP[R \scup S] \times \sP[T] \ar[r]_{\mu_{R \scup S,T}} &
    \sP[R \scup S \scup T]
 }
\]
The dual notion is a comonoid in $\spsm$, \cite[8.2.2]{AgMa} consisting of
\[ \Delta : \sP \pil \sP \cdot \sP, \quad \epsilon : \sP \pil \unit \]
which decompose into multimaps
\[ \Delta_{S,T} : \sP[X] \promap \sP[S] \times \sP[T], \quad
 \epsilon :  \sP[X] \promap \begin{cases} \nil, & X \neq \emptyset \\
    \{*\}, & X = \emptyset.
  \end{cases}
\]
Note in the last case that a multimap $\sP[X] \promap \nil$ is an
ordinary (the unique) set map $\sP[X] \pil \{*\}$, and a multimap
$\sP[\emptyset] \promap \{*\}$ identifies as an ordinary set map
$\sP[\emptyset] \pil \NN$.
The maps $\Delta$ and $\epsilon$ fulfill requirements dual to those
for $\mu$ and $i$. The dualization functor
$\setmm \mto{D} \setmm$, turns a monoid into a comonoid and vice versa.

\begin{remark} \label{rem:setcat-gradeo}
  In our settings we will always assume $P[\emptyset] =
\{*\}$ and both $\iota_\emptyset$ and $\epsilon_\emptyset$ are given by the map
$ * \mapsto (* \mapsto 1)$.
\end{remark}

\medskip
If one instead of $\setmm$ considers the category $\set$ with usual maps
between sets, there is not a good notion of comonoid due to there
being no maps $\sP[X] \pil \emptyset$.
One has however in \cite[Sec.4]{AM2} the more ad hoc notion
of set-theoretic comonoid.

The categories $\set$ and $\setmm$ have the same objects, sets.
We may then consider the categories $\sps$ and $\spsm$
to also have the same objects.
The faithful functor $\set \pil \setmm$ induces a faithful
functor $\sps \pil \spsm$, so $\Hom_{\sps}(\sP, sQ) \sus
\Hom_{\spsm}(\sP,\sQ)$, with many more
morphisms in the latter $\Hom$-set. By Lemma \ref{lem:setcat-bij} two
objects in $\spsm$ are isomorphic iff they are isomorphic in $\sps$.

\subsection{Bimonoids and Hopf monoids}

A bimonoid in $\spsm$ is a species $\sB$, \cite[Subsec.8.3]{AgMa}
with a monoid structure
$(\mu,i)$ and a comonoid structure $(\Delta, \epsilon)$ such
that:
\begin{itemize}
  \item[1.] $\Delta,\epsilon$ are morphism of monoids, or equivalently
  \item[2.] $\mu,i$ are morphisms of comonoids.
\end{itemize}
    Concretely, let
$X = A \scup B \scup C \scup D$ be a partition into four sets.
We have a diagram below where all maps are isomorphisms and each of the
four positions simply indicate different ways of writing
$X$ as a union of these four sets. For instance at the lower
left position we mean $(A \scup B) \scup (C \scup D)$. 
\[ \xymatrix{ X
    \ar[d] \ar[r] &
    \raisebox{-6mm}{\STv{\begin{matrix} {A} \\ {C} \end{matrix}}
      {\begin{matrix} {B} \\ {D} \end{matrix}}} \ar[d] \\
    \raisebox{-5mm}{\STh{A \,B}{C \, D}} \ar[r]
    & \raisebox{-5mm}{\ABCD{A}{B}{C}{D}}}
\]

  This induces a diagram in $\setmm$, where at any of the four positions
  of the diagram, the dividing lines mean that we take the cartesian
  product of sets. For instance at the upper right position we have
  $\sB[A \scup C] \times \sB[B \scup D]$:
  \begin{equation} \label{eq:setcat-deltamu}
    \xymatrix{ \sB[X] \ar[d]_{\Delta_{A B, C D}}
      & \raisebox{-4mm}{\STvb{ \sB \left [
            \begin{matrix} A \\ C \end{matrix} \right ]}
        {\sB \left [ \begin{matrix} B \\ D \end{matrix} \right ]}}
      \ar[d]_{\Delta_{A,C}}^{\Delta_{B,D}} \ar[l]_{\mu_{A C, B D}} \\
    \raisebox{-6mm}{\SThb{\sB[A \, B]}{\sB[C \,  D]}}  
    &
\raisebox{-6mm}{\ABCDb{\sB[A]} {\sB[B]} {\sB[C]} {\sB[D]}}
\ar[l]_{\mu_{A,B}}^{\mu_{C,D}}
}.
\end{equation}

In the lower right position we really should have respectively
\[ \sB[A] \times \sB[B] \times \sB[C] \times \sB[D],
  \quad \sB[A] \times \sB[C] \times \sB[B] \times \sB[D] \]
and then applying the twist map $\beta :
\sB[B] \times \sB[C] \pil \sB[C] \times \sB[B]$. We omit this minor
detail here.
The requirements 1. and 2.  for a bimonoid species is that:
\begin{itemize}
\item The diagrams \eqref{eq:setcat-deltamu} commute,
\item The three diagrams below in $\setmm$ commute.
\end{itemize}
\[ 
\xymatrix{ \sB[\emptyset] \times \sB[\emptyset] 
  \ar[r]^{\epsilon \times \epsilon} \ar[d]^{\mu} & \{*\} \times \{*\} \ar[d]\\
  \sB[\emptyset] \ar[r]^{\epsilon} & \{* \} }, \quad
\xymatrix{ \{*\} 
  \ar[r]^{i} \ar[d]^{\Delta} & \sB[\emptyset] \ar[d]^{\Delta} \\
  \{*\} \times \{*\} \ar[r]^{i \times i}
  & \sB[\emptyset] \times \sB[\emptyset]  }
\]
and
\[ \xymatrix{ & \sB[\emptyset] \ar[dr]^{\epsilon} &  \\
    \{*\} \ar[ur]^{i} \ar[rr]^{\id} & & \{* \} }
\]

A {\it Hopf monoid} is a bimonoid $\sH$ with a morphism of species
$s : \sH \pil \sH$, the {\it antipode},
which acts as an inverse of the identity map in the
convolution monoid $\Hom(\sH,\sH)$.
In our setting this will normally not exist since our morphisms
have coefficients in the natural numbers. However we may simply extend
to integer coefficients. Let $\setz$ be sets with multimaps
with integer coefficients. Then a bimonoid in $\setmm$ becomes
a bimonoid in $\setz$. By
\cite[Prop. 8.10]{AgMa} a necessary
and sufficient condition for the existence of the antipode is
that we have a map $s_{| \emptyset} : \sH[\emptyset] \pil \sH[\emptyset]$
making $\sH[\emptyset]$ into a Hopf monoid in $\setz$.
In our cases this will hold as we will consider {\it connected}
bimonoids, i.e. which have $\sH[\emptyset] = \{*\}$.
We may take  $s_{|\emptyset}$ the identity map.

\medskip
\noindent {\it Note.} We shall only consider connected bimonoid species $\sB$,
i.e.
we have $\sB[\emptyset] = \{*\}$, a one-element set.
For bimonoids in species in $\setmm$, they then automatically become
Hopf monoids in species when we consider them as species in $\setz$, i.e.
extend the coefficients in morphisms
to integers. Our concern in this article will not be antipodes, and
we therefore stick to the minimal effective setting needed,
which is the category of species in $\setmm$. 

\medskip
\noindent{\it Significant point of view.}
For a bimonoid $\sB$ with $(\mu,i)$ and $(\Delta, \epsilon)$, the monoid
structure $(\mu,i)$ may be dualized to a comonoid structure
$(\Delta^\prime, \epsilon^\prime)$. Se we get $\sB$ with two comonoid structures.
We may then ask: Given a species $\sP$ with to comonoid structures
$(\Delta^1, \epsilon_1)$ and $(\Delta_2, \epsilon_2)$, when do they give
a bimonoid (and a Hopf) species when dualizing $(\Delta^2, \epsilon_2)$ to
$(\mu_2, i_2)$?

\medskip
\noindent{\it Important point.} If we had considered bimonoids $\sB$ in
species of vector spaces,
the dual of a product map $\mu : \sB[S] \te \sB[T] \pil \sB[X]$
will be a coproduct map  $\Delta^\prime : \sB[X]^* \pil \sB[S]^* \te \sB[T]^*$.
This does not really allow one to compare this with a coproduct
map $\Delta :  \sB[X] \pil \sB[S] \te \sB[T]$, since these
live on different (dual) spaces. This is a possible reason why this point of
view has apparently not been pursued. However when we consider species
in the category $\setmm$, the dual $\Delta^\prime$ of a product map $\mu$ is
a morphism between the same objects (sets) as the coproduct map $\Delta$.
This allows us to compare them.

\medskip
Very often for species, the maps $\Delta_{S,T}^i$ for $i = 1,2$
are {\it partial maps}.
So let us consider a species $\sP : \setx \pil \setmm$ with two
comonoid structures $\Delta^1$ and $\Delta^2$ such that for each
disjoint union $X = S \scup T$ the maps:
\[ \Delta^1, \Delta^2 : \sP[X] \promap \sP[S] \times \sP[T] \]
are partial maps. Let $(\mu_2, i_2)$ be the monoid
structure dual to $(\Delta^2, \epsilon_2)$.
For each decomposition $X = A \scup B \scup C \scup D$
  there is a diagram:
  \begin{equation} \label{eq:setcat-dia}
    \xymatrix{ \sP[X] \ar[r]^{\Delta^2_{A C,
          B D} \hskip 3mm} \ar[d]_{\Delta^1_{A B, C D}}
      & \raisebox{-3.5mm}{\STvb{\sP\left [ \begin{matrix} A \\ C \end{matrix} \right ]}
        {\sP\left [\begin{matrix} B \\ D \end{matrix} \right ]}
          \ar[d]^{\Delta^1_{B,D}}_{ \Delta^1_{A,C}}} \\
        \raisebox{-5mm}{\SThb{\sP[A \, B]}{\sP[C \, D]}
          \ar[r]^{\Delta^2_{A,B}}_{\Delta^2_{C,D}}}
          & \raisebox{-6.5mm}{\ABCDb{\sP[A]}{\sP[B]}{\sP[C]}{\sP[D]}}.
        }
\end{equation}

      We shall now investigate conditions on $\Delta^1$ and $\Delta^2$
      for this diagram such that $(\Delta^1, \mu_2, \epsilon^1, \iota_2)$
      becomes a bimonoid (and so a Hopf monoid with the assumptions above
      on $\sP[\emptyset]$ that this is $\{* \}$).

      \begin{proposition} \label{pro:setcat-DeMu}
        Let $\sP$ be a species in $\setmm$ with comonoid structures
        $\Delta^1, \Delta^2$ being partial maps,
        with $\epsilon_1 = \epsilon_2$ the natural maps on $\sP[\emptyset]$.
        Then  $(\sP, \Delta^1, \epsilon_1, \mu_2, i_2)$
        is a bimonoid in species iff
        the diagram \eqref{eq:setcat-dia} is always a partial pull-back
        diagram.
        \end{proposition}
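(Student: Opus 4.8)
The proposition is an "unwinding the definitions" statement, so the strategy is to translate the bimonoid axioms into properties of the maps $\Delta^1,\Delta^2$, and then recognize those properties as exactly the partial‑pullback condition via Corollary~\ref{cor:setcat-pullback}. I would proceed axiom by axiom. Since $(\mu_2,i_2)$ is the dual of $(\Delta^2,\epsilon_2)$ by construction, and $\Delta^1,\Delta^2$ are already comonoid structures by hypothesis, the only thing left to check for a bimonoid is the compatibility between $\mu_2$ and $\Delta^1$ (the commutativity of the square \eqref{eq:setcat-deltamu}, here with $\mu=\mu_2$ and $\Delta=\Delta^1$) together with the three small unit/counit diagrams involving $\sP[\emptyset]$. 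The latter three are immediate from the assumption $\epsilon_1=\epsilon_2$ are the natural maps on $\sP[\emptyset]$ and Remark~\ref{rem:setcat-gradeo}: both sides are the identity or the canonical map on a one‑point set, so these diagrams commute automatically. So the real content is the compatibility square.

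**The key step.**
Fix a decomposition $X = A \scup B \scup C \scup D$. The bimonoid compatibility square for $(\Delta^1,\mu_2)$ is precisely the dual, in the vertical direction, of the diagram \eqref{eq:setcat-dia}: indeed $\mu_2 = D\Delta^2$, so the square asserting $\Delta^1\circ\mu_2 = (\mu_2\times\mu_2)\circ(\text{rearrange})\circ(\Delta^1\times\Delta^1)$ is obtained from \eqref{eq:setcat-dia} by dualizing the two $\Delta^2$ arrows into $\mu_2$ arrows. Concretely, \eqref{eq:setcat-dia} has the shape of the square \eqref{eq:setcat-xyzw} with $\alpha=\Delta^2_{AC,BD}$ on top, $\beta=\Delta^1_{AB,CD}$ on the left, $\gamma = \Delta^1_{A,C}\times\Delta^1_{B,D}$ on the right, $\delta = \Delta^2_{A,B}\times\Delta^2_{C,D}$ on the bottom (up to the twist $\beta$ on $\sB[B]\times\sB[C]$, which is an isomorphism and thus harmless). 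Dualizing the horizontal maps $\alpha,\delta$ turns \eqref{eq:setcat-dia} into exactly the left‑hand diagram of \eqref{eq:setcat-xyzwD}, whose commutativity is — word for word — the bimonoid compatibility condition for $(\Delta^1,\mu_2)$ on this decomposition. All four maps $\alpha,\beta,\gamma,\delta$ are partial maps: $\Delta^1,\Delta^2$ are partial by hypothesis, and a product of partial maps is partial (entrywise the matrices have row sums $0$ or $1$, and the product has row sums $0$ or $1$; equivalently, a product of partial maps in $\setmm$ is partial). Hence Corollary~\ref{cor:setcat-pullback} applies directly and says: the left diagram of \eqref{eq:setcat-xyzwD} commutes iff \eqref{eq:setcat-dia} is a partial pullback diagram. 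Running this equivalence over all decompositions $X=A\scup B\scup C\scup D$ yields the proposition.

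**The expected obstacle and bookkeeping.**
The only genuinely fiddly point is matching up the diagram \eqref{eq:setcat-dia} with the abstract square \eqref{eq:setcat-xyzw} so that Corollary~\ref{cor:setcat-pullback} can be cited verbatim. One must check that "dualize the horizontal maps of \eqref{eq:setcat-dia}" really does produce the bimonoid compatibility square, and in particular that the twist isomorphism $\beta\colon \sB[B]\times\sB[C]\to\sB[C]\times\sB[B]$ inserted at the lower‑right corner (as flagged in the discussion around \eqref{eq:setcat-deltamu}) does not disturb commutativity — it does not, since an isomorphism composed on one side of a square preserves commutativity, and it dualizes to its own inverse, another isomorphism. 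I would also note that the empty‑set decompositions (where some of $A,B,C,D$ are empty) are included among "all decompositions", so the unit/counit diagrams are subsumed once one observes $\sP[\emptyset]=\{*\}$ forces the relevant partial pullback squares to be trivially pullbacks in $\set$. With the correspondence set up, the proof is one application of Corollary~\ref{cor:setcat-pullback} per decomposition, and there is no further calculation.
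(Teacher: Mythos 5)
Your proposal is correct and matches the paper's argument: the paper's entire proof is the single line ``This follows from Corollary~\ref{cor:setcat-pullback},'' and your write-up is a faithful unpacking of that citation, identifying the compatibility square for $(\Delta^1,\mu_2)$ as the horizontal dual of \eqref{eq:setcat-dia} and checking that products of partial maps are partial so the corollary applies. The extra bookkeeping you supply (the twist isomorphism, the unit/counit diagrams) is implicit in the paper but correctly handled.
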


    \begin{proof}
      This follows from Corollary \ref{cor:setcat-pullback}.
    \end{proof}

    \subsection{Species with restrictions}
    In all examples and cases we consider for species,
    for a subset $Y \sus X$ it is meaningful
    to have restriction maps $\sS[X] \pil \sS[Y]$.
    For two sets $X,Y$, a {\it coinjection} $X \pil Y$ is an
    injective map $i : Y \pil X$. 

    Let $\setii $ bf the category of finite sets with coinjections $X \pil Y$
    as morphisms. Note that $\setx$ embeds as a full subcategory by
    sending a bijection $\sigma : X \overset{\iso}{\pil} Y$ to the
    coinjection $\sigma : X \pil Y$ corresponding to the injection
    $\sigma^{-1} : Y \pil X$. 

    \begin{definition} A {\it species with restrictions} is
      a functor $\sS : \setii \pil \set $.
      An injection 
      $i : Y \pil X$ gives  a map $i^* : \sS[X] \pil \sS[Y]$,
      and for $s \in \sS[X]$ 
      we usually write $s_{Y}$ for $i^*(s)$. 
\end{definition}

Species with restrictions, as well as several other interesting variants,
were first considered in \cite{Sch-CoHa}.
They are discussed in \cite[Section 8.7.8]{AgMa}, relating to linearized
comonoids. The category of species of sets with restrictions is given
to be equivalent to linearized cocommutative comonoids, \cite[Prop.8.29]{AgMa}.
With the central idea of the present article, see further Section
\ref{sec:respre}, we may linearize set species to non-cocommutative
species. In \cite{Pe} species with restrictions are termed combinatorial
presheaves.

A species with restrictions $\sS : \setii \pil \set$
induces of course  a species with
restrictions $\sS : \setii \pil \setmm $.

\begin{definition} \label{def:setcat-resco}
  Let $\sS$ be a species with restrictions in $\set$
  and consider it as a species with restrictions in $\setmm$. 
  A comonoid structure $(\Delta,\epsilon)$ on $\sS$
  is a {\it restriction comonoid} iff for each decomposition
  $X = U \scup V$ in the comonoid map 
  \[ \Delta : \sS[X] \pil \sS[U] \times \sS[V], \] an element
  $s \in \sS[X]$ has image either $\Delta(s) = \nil$ or
  $\Delta(s) = (s_U,s_V)$ where $s_U$ and $s_V$ are the restrictions of $S$.

  The counit $\epsilon(s) = \nil$ if $X \neq \emptyset$, and $\epsilon(*)$
  is $* \in \unit[\emptyset]$ when $X = \emptyset$ (see Remark
  \ref{rem:setcat-gradeo}).
\end{definition}

\fbox{\parbox{11.8cm}
{{\it Central theme.} Given a restriction species,
it may induce several
restriction comonoids by varying when $\Delta(s)$ is
$\nil$ or not.} }

\medskip
\noindent Section \ref{sec:bimonoid} gives our general way of doing this.
In Sections \ref{sec:respre} - \ref{sec:HApairs} we for each species give
two comonoids $\Delta^1$ and $\Delta^2$.

\section{The lattice of preorders}
\label{sec:pre}

Our main notion in this article is restriction species over preorders.
We recall basics on preorders.

\subsection{Preorders} \label{subsec:preorders}
A {\it preorder} $\leq$ on a set $X$, is a relation on $X$ which is: 

\begin{itemize}
\item[1.] {\it Reflexive:} $a \leq a$ for every $a \in X$
\item[2.] {\it Transitive:} If $a \leq b$ and $b \leq c$ then $a \leq c$ for
  $a,b,c \in X$.
\end{itemize}

Denote by $\Pre(X)$ the set of all preorders on $X$.
It is a partially ordered set with order relation $\preceq $
where
\[ \leq_1 \,\, \preceq \,\,
  \leq_2 \text{ iff } a \leq_1 b \Rightarrow a \leq_2 b \text{ for every }
a,b \in X.\]
In fact $(\Pre(X), \preceq)$ is a complete lattice. The smallest element is the
discrete order where no two distinct elements are comparable and
the largest element is the coarse order $\leq $ where $a \leq b$ for every
pair of elements $a,b \in X$.

\begin{definition}
An {\it down-set} for a preorder $\leq$ on $X$, is a subset
of $I$ of $X$ such that $y \in I$ and $x \leq y$ implies $x \in I$.
An {\it up-set} for $\leq$ is a subset $F$ of $X$ such
that $x \in F$ and $x \leq y$ implies $y \in F$. Whenever $I$ is a down-set
the complement set $F = X \backslash I$  is an up-set. Such a pair
$(I,F)$ is a {\it cut} for the preorder $\leq$. 
\end{definition}

Preorders on $X$ correspond to finite topologies on $X$.
Given a preorder $P$, the open subsets of the topology
are the up-sets of $P$. So the open subsets are the upper sets
of cuts of $P$. In particular the {\it discrete topology} on $X$, $D = D(X)$, 
corresponds to the minimal element in $\Pre(X)$,  the preorder
where the only comparability
relations are $a \leq a$ for $a \in X$. The {\it coarse topology}, $C = C(X)$, 
where the only open subsets are $\emptyset$ and $X$, corresponds
to the maximal element in $\Pre(X)$, where
we have $a \leq b$ for any two elements $a,b$ in $X$.

\medskip
For two preorders
$P$ and $Q$ the meet
$P \wedge Q$ is the preorder $\leq$ where $a \leq b$ if $a \leq_P b$
and $a \leq_Q b$.
The join $P \vee Q$ is the unique smallest preorder $R$
such that $P \preceq R$ and $Q \preceq R$. 
It is obtained by taking the transitive closure of the union of
the ordering relations for $P$ and $Q$.
There is furthermore an involution on $\Pre(X)$ by sending a preorder
$P$ to its opposite preorder $P^\op$ where $a \leq_{P^\op} b$ iff
$b \leq_P a$. 

\begin{lemma} \label{lem:pre-resY}
  Let $P$ and $Q$ be preorders on $X$ and
  $Y \sus X$. Then:
  \[ P_{|Y} \vee Q_{|Y} \preceq (P \vee Q)_{|Y}, \quad
    P_{|Y} \wedge Q_{|Y} = (P \wedge Q)_{|Y}. \]
  Furthermore if $Y$ is either a down-set or an up-set for $P \vee Q$,
  we have equality in the first relation.
\end{lemma}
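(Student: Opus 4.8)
The plan is to prove the three assertions in turn, working directly from the definitions of meet, join, and restriction of preorders.

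First I would handle the meet identity $P_{|Y} \wedge Q_{|Y} = (P \wedge Q)_{|Y}$, which is the easy one. For $a,b \in Y$, by definition $a \leq_{(P\wedge Q)_{|Y}} b$ iff $a \leq_{P \wedge Q} b$ (as elements of $X$) iff $a \leq_P b$ and $a \leq_Q b$; on the other hand $a \leq_{P_{|Y} \wedge Q_{|Y}} b$ iff $a \leq_{P_{|Y}} b$ and $a \leq_{Q_{|Y}} b$, which is again $a \leq_P b$ and $a \leq_Q b$. So the two relations coincide. This also makes the inclusion $P_{|Y} \wedge Q_{|Y} \preceq (P\wedge Q)_{|Y}$ an equality, so that half of the first displayed relation needs no separate argument; but I should still record the meet statement since it is used later.

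Next, for the join inclusion $P_{|Y} \vee Q_{|Y} \preceq (P\vee Q)_{|Y}$: since $P \preceq P \vee Q$, restricting to $Y$ gives $P_{|Y} \preceq (P\vee Q)_{|Y}$, and likewise $Q_{|Y} \preceq (P\vee Q)_{|Y}$. As $P_{|Y} \vee Q_{|Y}$ is by definition the smallest preorder above both $P_{|Y}$ and $Q_{|Y}$, it is below $(P\vee Q)_{|Y}$. Alternatively, and more explicitly for the equality statement to come, I would describe $\leq_{P \vee Q}$ concretely: $a \leq_{P\vee Q} b$ iff there is a chain $a = x_0, x_1, \dots, x_k = b$ in $X$ where each consecutive step $x_{i} \leq_P x_{i+1}$ or $x_i \leq_Q x_{i+1}$ (this is the transitive closure of the union of the two relation sets). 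The inclusion is then clear: a chain witnessing $a \leq_{P_{|Y}} b$ or a chain witnessing $a \leq_{Q_{|Y}} b$, spliced together, is in particular such a chain in $X$ lying inside $Y$, hence $a \leq_{(P\vee Q)_{|Y}} b$.

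The main obstacle, and the substantive part, is the equality when $Y$ is a down-set (or up-set) for $P \vee Q$. Suppose $Y$ is a down-set for $R := P\vee Q$, and let $a,b \in Y$ with $a \leq_R b$. Take a chain $a = x_0, x_1, \dots, x_k = b$ in $X$ with each step a $P$-step or a $Q$-step. I claim every $x_i$ lies in $Y$: indeed $x_i \leq_R x_{i+1} \leq_R \dots \leq_R x_k = b$, so $x_i \leq_R b \in Y$, and since $Y$ is a down-set for $R$, $x_i \in Y$. Hence the chain lies entirely in $Y$, each step is a $P_{|Y}$-step or a $Q_{|Y}$-step, and therefore $a \leq_{P_{|Y} \vee Q_{|Y}} b$. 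Combined with the inclusion already shown, this gives equality. The up-set case is symmetric: if $Y$ is an up-set for $R$, then for $a,b \in Y$ with $a \leq_R b$ and a connecting chain $x_0,\dots,x_k$, one has $a = x_0 \leq_R x_i$ with $a \in Y$, so $x_i \in Y$ for all $i$, and the same conclusion follows. (One could also deduce the up-set case from the down-set case by passing to opposite preorders, since $(P\vee Q)^{\op} = P^{\op} \wedge Q^{\op}$ interacts with restriction — but this is more delicate with join versus meet, so the direct symmetric argument is cleaner.)

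I expect no genuine difficulty beyond being careful that the chain-characterization of the join is the right tool and that "down-set for $P\vee Q$" — rather than for $P$ or $Q$ separately — is exactly what forces the intermediate chain elements to stay inside $Y$.
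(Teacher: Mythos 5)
Your proposal is correct and follows essentially the same route as the paper: the meet identity and the join inequality are immediate from the definitions, and the equality statement is proved by taking an alternating $P$/$Q$-chain witnessing $a \leq_{P\vee Q} b$ and using the down-set (resp.\ up-set) hypothesis to force every intermediate element into $Y$. The only cosmetic difference is that you deduce $x_i \in Y$ directly from $x_i \leq_{P\vee Q} b$, while the paper walks backwards along the chain one step at a time; these are the same argument.
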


\begin{proof}
  It is clear that both $P_{|Y}, Q_{|Y} \preceq (P \vee Q)_{|Y}$, giving
  the first relation. The second is also clear.
  For the last statement, suppose $a \leq b$ in $(P \vee Q)_{|Y}$.
  Then there is a sequence of elements in $X$:
  \[ a = x_0 \leq_P x_1 \leq_Q x_2 \leq_P \cdots \leq_Q x_r = b. \]
  If $Y$ is a down-set for $P \vee Q$, it is a down-set for both
  $P$ and $Q$.  Since $b = x_r \in Y$ we
  get $x_{r-1} \in Y$ and then successively all $x_i$ in $Y$.
  Hence we also have $a \leq b$ in $P_{|Y} \vee Q_{|Y}$.
  The argument when $Y$ is an up-set is similar.
\end{proof}

\begin{definition} \label{def:pre-c} Given a preorder $P$ on $X$.
Define a relation on $X$ by $a \circ b$ if $a \leq b$ and $b \leq a$.
This is an equivalence relation, and so it partitions $X$ into
equivalence classes. Each such class is a {\it bubble} for $P$.
The preorder $P$ is a poset if each bubble is a singleton.

If $a \leq b$ and $a$ and $b$ are not in the same bubble, we write $a < b$. 
If $a$ and $b$ are elements that are not comparable for the preorder
we write $a \nrel{} b$. 

We have a {\it total preorder} if any two elements of $X$ are comparable
for the preorder. If the total preorder is a poset it is a {\it total order}.
In this case for any two elements $a,b$ in $X$ precisely one
of the following holds: $a < b, a = b, a > b$.

The preorder is a {\it partition order} if $a \leq b$ implies
$a \circ b$. In other words only elements in the same bubble are comparable.
Such a preorder is the same as an equivalence relation.
Note that a preorder $P$ is a partition order iff $P = P^{\op}$. 
\end{definition}

\begin{lemma} Let $P$ be a preorder.

a. $P^\bob = P \wedge P^{\op}$ is the partition order whose bubbles are the
bubbles of $P$. In particular $P$ is a poset iff $P^\bob$ is the
minimal element $D$.

b. $P^\bullet = P \vee P^{\op}$ is the partition order where the underlying sets
of the bubbles are the underlying sets of the connected components of
$P$. In particular if $P$ has only one component, then $P^\bullet$ is
the maximal element $C$.
\end{lemma}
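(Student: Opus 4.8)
The plan is to establish both parts by unwinding the definitions and using the standard characterizations of meet and join of preorders, together with the already-recorded facts about $P^\op$. Recall that $P^\bob = P \wedge P^\op$ means the preorder $\leq$ where $a \leq b$ iff $a \leq_P b$ \emph{and} $b \leq_P a$, which is precisely the relation $a \circ b$ of Definition \ref{def:pre-c}; since $\circ$ is an equivalence relation, $P^\bob$ is an equivalence relation, hence a partition order. Its comparability classes (= bubbles) are exactly the $\circ$-classes, which are by definition the bubbles of $P$. For the addendum: $P^\bob = D$ iff every $\circ$-class is a singleton iff $P$ is a poset, which is immediate from Definition \ref{def:pre-c}.

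For part b, the first step is to observe that $P^\bullet = P \vee P^\op$ is a partition order, i.e. $(P \vee P^\op)^\op = P \vee P^\op$. This follows because $(P \vee P^\op)^\op = P^\op \vee P$ (the opposite of a transitive closure of a union of relations is the transitive closure of the union of the opposite relations, and join is commutative), so $P^\bullet$ is self-opposite and hence a partition order by the last line of Definition \ref{def:pre-c}. Next I would identify the bubbles. Since $P^\bullet$ is obtained by transitively closing the union of $\leq_P$ and $\geq_P$, two elements $a,b$ lie in the same bubble of $P^\bullet$ iff there is a zigzag chain $a = x_0 \, R_1 \, x_1 \, R_2 \, \cdots \, R_r \, x_r = b$ where each $R_i$ is either $\leq_P$ or $\geq_P$ — but this is exactly the condition that $a$ and $b$ lie in the same connected component of $P$ (viewing $P$ as a directed graph, connectivity in the underlying undirected graph). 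So the underlying sets of the bubbles of $P^\bullet$ are the underlying sets of the connected components of $P$, as claimed. The special case is then immediate: if $P$ has a single component, all of $X$ is one bubble, so $a \leq_{P^\bullet} b$ for all $a,b$, i.e. $P^\bullet = C$.

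The main obstacle, such as it is, is being careful about what ``same bubble of $P^\bullet$'' means: a bubble is a $\circ$-class of $P^\bullet$, so $a$ and $b$ are in the same bubble iff $a \leq_{P^\bullet} b$ and $b \leq_{P^\bullet} a$; but since $P^\bullet$ is already a partition order, $a \leq_{P^\bullet} b$ alone forces $a \circ_{P^\bullet} b$, so ``same bubble'' is just ``comparable in $P^\bullet$'', which is ``connected by a zigzag'', which is ``same component of $P$''. Once this equivalence of three conditions is spelled out the rest is formal. I expect the whole argument to be short; the only thing that needs a sentence of justification is that the transitive closure of $\leq_P \cup \geq_P$ yields precisely undirected connectivity, which one proves by an obvious induction on the length of the zigzag (and I would just cite the chain description of $P \vee Q$ already used in the proof of Lemma \ref{lem:pre-resY}).
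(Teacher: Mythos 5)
Your proof is correct and follows essentially the same route as the paper's: show each of $P^\bob$ and $P^\bullet$ is self-opposite (hence a partition order by Definition \ref{def:pre-c}) and then identify the bubbles directly from the definitions of meet and join, with the zigzag description of the transitive closure giving the connected components in part b. The paper's own proof is just a terser version of exactly this argument.
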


\begin{proof}
Clearly $P^\bob = (P^\bob)^{\op}$, and so $P^\bob$ is a partition order.
The same goes for $P^\bullet$. It is also clear that $a$ and $b$ are in the
same bubble of $P$ iff they are in the same bubble of $P^\bob$.
Also $a$ and $b$ are in the same component of $P$ iff they are in the
same bubble of $P^\bullet$. 
\end{proof}

\subsection{Permutations and global descents} The
following will be used in Section \ref{sec:perm} on
permutation Hopf algebras.
 Let $T_1$ and $T_2$ be total orders on $X$. Then $T_1$ induces
  an order preserving bijection $X \pil [n]$. With this identification,
  the pair $(T_1,T_2)$ identifies as a permutation $\sigma$ on $[n]$.

  If one instead use $T_2$ to give an order preserving bijection
  $X \pil [n]$, the pair $(T_2,T_1)$ identifies as the inverse
  permutation $\sigma^{-1}$.

  \begin{lemma} Let $T_1$ and $T_2$ be total orders, with $(T_1,T_2)$
    corresponding to the permutation $\sigma$. 

a. The preorder $T = T_1 \vee T_2^{\op}$ is a total preorder.
  It corresponds
  to the global descent decomposition of $\sigma$: If we write
  \[ \sigma = n_1n_2 \cdots n_{r-1}| n_{r} \cdots n_{s-1} | n_s \cdots \]
  where we have global descents at  $r-1, s-1, \cdots$ and so on, then
  each of the segments between the vertical markers $|$ become
  bubbles in $T$ and where we have global descents, the ordering relation
  in $T$ is strict.

b. For the components of $S = T_1 \wedge T_2$, their underlying
  sets are  precisely the underlying sets of
  the bubbles in $T$, or formally $S^\bullet = T^\bob$. 
\end{lemma}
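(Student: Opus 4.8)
The plan is to fix the listing $x_1<_{T_2}x_2<_{T_2}\cdots<_{T_2}x_n$ of $X$ in $T_2$-order, so that $\sigma(i)$ is the $T_1$-rank of $x_i$. Everything rests on the following reading of a global descent, immediate from its definition: \emph{$\sigma$ has a global descent at position $l$ iff $x_i>_{T_1}x_j$ whenever $i\le l<j$} (and then automatically $x_i<_{T_2}x_j$). Note also $T\succeq T_1$ and $T\succeq T_2^{\op}$: the first makes $T$ a total preorder (as $T_1$ is a total order), and the second gives $x_j\le_T x_i$ whenever $i<j$ (then $x_j<_{T_2^{\op}}x_i$).

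For part a the crux is the claim that, for $i<j$, the elements $x_i$ and $x_j$ lie in the same bubble of $T$ iff no position in $\{i,i+1,\dots,j-1\}$ is a global descent of $\sigma$; since $x_j\le_T x_i$ always holds, this reduces to deciding whether $x_i\le_T x_j$. If some $l$ with $i\le l<j$ is a global descent, let $L$ be the set of the $l$ elements first in $T_2$-order, so $x_i\in L$ and $x_j\notin L$; by the displayed property every element of $L$ is both $T_1$-above and $T_2$-below every element of the complement, so a chain realising $x_i\le_T x_j$ in $T=T_1\vee T_2^{\op}$ would contain a step from $L$ to the complement, which can be neither a $<_{T_1}$-step nor a $<_{T_2^{\op}}$-step — contradiction; hence $x_i\not\le_T x_j$. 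Conversely, if $x_{l+1}<_T x_l$ for some $l$, then for any positions $j\le l<i$ the chain $x_i\le_T x_{l+1}<_T x_l\le_T x_j$ (outer steps from $T\succeq T_2^{\op}$) forces $x_i<_T x_j$, hence $x_j\not\le_T x_i$; as $x_j<_{T_1}x_i$ would give $x_j\le_T x_i$, we get $x_j>_{T_1}x_i$, i.e.\ $l$ is a global descent. Since the bubble relation is an equivalence, these two facts give the claim (for the ``if'' direction, apply the second to each consecutive pair $x_l,x_{l+1}$ with $i\le l<j$ and use transitivity). Hence the bubbles of $T$ are exactly the maximal runs of consecutive $x_i$ without an interior global descent, i.e.\ the segments of the global descent decomposition; and because $x_j\le_T x_i$ for $i<j$, the earlier segment is the $T$-larger one, so the relation of $T$ between consecutive segments is strict.

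For part b, note first that if $a,b$ lie in distinct segments then the global descent separating them gives $a>_{T_1}b$ and $a<_{T_2}b$ (or vice versa), so $a$ and $b$ are incomparable in $S=T_1\wedge T_2$; hence every connected component of $S$ lies inside a single segment. It remains to show that a segment $B$ is $S$-connected. By Lemma~\ref{lem:pre-resY} we have $S_{|B}=(T_1)_{|B}\wedge(T_2)_{|B}$; applying part a to the total orders $(T_1)_{|B},(T_2)_{|B}$ on $B$ — whose permutation is the factor of $\sigma$ cut out by $B$, hence has no global descent (one at an interior position would be a global descent of $\sigma$) — gives $(T_1)_{|B}\vee(T_2)_{|B}^{\op}=C$. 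So it suffices to prove: \emph{for total orders $P,Q$ on a finite set, if $P\wedge Q$ is disconnected then $P\vee Q^{\op}\neq C$.} Suppose the set splits as $U\sqcup V$ with $U,V$ nonempty and no element of $U$ comparable in $P\wedge Q$ to any element of $V$; since $P,Q$ are total orders this says precisely $u<_P v\Leftrightarrow u>_Q v$ for $u\in U$ and $v\in V$. Relabelling so that the $P$-minimum lies in $U$, put $W=\{y:\ y<_P v\text{ for all }v\in V\}$. Then $W$ is a $P$-down-set, is nonempty (it contains the $P$-minimum) and proper ($W\cap V=\emptyset\neq V$), and it is a $Q$-up-set: if $y\in W$ and $y<_Q y'$ then $y'\notin V$ (else the equivalence forces $y>_P y'$, contradicting $y\in W$), so $y'\in U$, and then $v<_Q y<_Q y'$ for each $v\in V$ forces $y'<_P v$ by the equivalence. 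Thus $W$ is a nontrivial down-set of $P\vee Q^{\op}$, so $P\vee Q^{\op}\neq C$, contradicting the previous step. Therefore each segment is a single component of $S$, so the components of $S$ coincide with the segments, which are the bubbles of $T$; that is, $S^\bullet=T^\bob$.

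The step I expect to be the main obstacle is the last implication — constructing the separating down-set $W$ inside a segment; the rest is routine manipulation of the global-descent dictionary and of chains in the join preorder $T$.
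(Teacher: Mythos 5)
Your proof is correct, and it reaches the conclusion by a noticeably different route from the paper's, most visibly in part b. The paper first treats the no-global-descent case head-on: it shows $T_1\vee T_2^{\op}=C$ by an explicit bubble-merging argument, and shows $T_1\wedge T_2$ is connected by iteratively building a path upward from the $T_1$-minimum ("in this way we may continue"); the general case is then obtained by localizing to the segments and checking that global descents force strictness and disconnection. You instead organize part a around the clean equivalence that $x_i\le_T x_j$ (for $i<j$ in $T_2$-order) holds iff no global descent separates them, with the forward direction coming from the observation that any chain in the join would need a step crossing the descent cut, which is impossible in either generating relation; this is essentially the paper's strictness argument, repackaged as an iff. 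For part b your argument is genuinely different: rather than constructing a connecting path inside a segment, you pass to the contrapositive and show that a disconnection $U\sqcup V$ of $P\wedge Q$ (for total orders $P,Q$) produces an explicit nonempty proper subset $W$ that is simultaneously a $P$-down-set and a $Q$-up-set, hence a nontrivial cut of $P\vee Q^{\op}$, contradicting part a applied to the restricted (descent-free) pair via Lemma \ref{lem:pre-resY}. What this buys is a tighter, non-iterative argument that replaces the paper's somewhat informal path-building, and it isolates a reusable fact (a disconnected meet of two total orders forces a common cut); the cost is that part b now depends on part a and on the small observation, which you state but do not prove, that a global descent of the permutation restricted to a segment would already be a global descent of $\sigma$ -- true because the neighbouring global descents supply the missing comparisons, but worth a sentence.
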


\begin{proof}
  {\bf $(T_1,T_2)$ has no global descent.}
  First assume the permutation $\sigma$
  corresponding to $(T_1,T_2)$ has no global
  descents. We show that $T = T_1 \vee T_2^\op$ is the maximal preorder $C$.
  Let $x \leq_1 y$. Since we have no global descent at $x$ there are
  $z \leq_1 x <_1 w$ such that $z \leq_2 w$, and so $z \geq_2^\op w$.
  If $y \leq_1 w$ we get $x \leq_1 y \leq_1 w \leq_2^\op z \leq_1 x$, and
  get $x$ and $y$ in the same $T$-bubble. If $w \leq_1 y$, we get
  $x \leq_1 w \leq_2^\op z \leq_1 x$ and get $x,w$ in the same bubble.
  Then we may continue as above with the pair $w \leq_1 y$, and get
  successively $w = w_1 <_1 \cdots < w_r$ all in the same bubble, until
  we end with $x$ and $y$ in the same $T$-bubble. 

  Still assuming $(T_1,T_2)$ has no global descent,
  we show that $T_1 \wedge T_2$ is a connected poset, or
  $(T_1 \wedge T_2) \vee (T_1 \wedge T_2)^\op = C$.
  Let $x$ be minimal for $T_1$. As we have no global descent, there
  is $x <_1 w$ with $x <_2 w$. So $x$ and $w$ are connected.
  Let $x <_1 y$ be the covering. If $x >_2 y$ then $y <_1 w$ and
  $y <_2 w $ and so $y$ and $w$ are connected. If $x<_2 y$ then
  $x$ and $y$ are connected. In this way we may continue
  and get the $\leq_1$-segment from $x$ to $w$ connected.
  After that, as $w$ is not a global descent, we may continue as above, and
  get a connected $\leq_1$-segment from $w = w_1$ to a large $w_2$.
  Again we continue and eventually get $T_1 \wedge T_2$ a connected poset.

  \medskip
 \noindent {\bf The general situation.}
  From the above, if we have global descents at $x$ and $y$ and no
  global descents in between, the interval $\langle x, y]$ for
  $\leq_1$ gets contained
  in a bubble of $T_1 \vee T_2^\op$. Also any such interval
  gets connected in $T_1 \wedge T_2$. 

  On the other hand, suppose there is 
  a global descent at $x$. Let $D = \{ z \, |\, z \leq_1 x \}$ and
  the complement $U = \{ w \, | \, x <_1 w \}$.
  Then for any $z \in D$ and $w \in U$
  we have $z <_1 w$ and $z >_2 w$ and so $z <_2^\op w$. Hence for
  such $z$ and $w$ we have strictly $z < w$ in $T_1 \vee T_2^\op$.
Thus the ordering relation is strict at global descents.

  Since $z <_1 w$ and $z >_2 w$ for $z \in D$ and $w \in U$, we
  cannot get $z$ and $w$ connected in $T_1 \wedge T_2$, as we cannot
  get any transition from $D$ to $U$.
  Thus the global descents disconnect $T_1 \wedge T_2$.
\end{proof}

\subsection{Refinements of preorders}
\label{subsec:refine}
In Section \ref{sec:pairs} we investigate pairs of preorders $(P,Q)$
and when they give intertwining comonoids.
In  Section \ref{sec:HApairs} 
we improve on the characterization 
by the notion of refinement of partial orders which we now consider.

\begin{definition} \label{def:refine-refine}
A preorder $P$ is a {\it refinement} of $Q$ if:
\begin{itemize}
\item[a.] Each bubble of $P$ is contained in a bubble of $Q$:
  \[ a \circ_P b \Rightarrow a \circ_Q b. \]
\item[b.] If $a,b$ are {\it not} in the {same} $Q$-bubble, then:
  \[ a <_Q b \quad \Leftrightarrow \quad a <_P b. \]
\end{itemize}
Note that $P \preceq Q$.
If the restriction of $P$ to each bubble of $Q$ is a partition order,
then $P$ is a {\it bubble refinement}. This is equivalent to a. and
\begin{itemize}
\item[b'.] For any $a,b$: $a <_Q b \, \Leftrightarrow \, a <_P b$.
\end{itemize}
\end{definition}

\begin{lemma}
  Given a preorder $P$. There is a unique minimal total preorder $T$, such that
  $P$ is a refinement of $T$. We call $T$ the {\it total preorder hull} of
  $P$.
\end{lemma}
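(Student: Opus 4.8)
The plan is to construct the total preorder $T$ explicitly from the data of $P$ and then verify minimality. First I would observe what it means for $P$ to be a refinement of a \emph{total} preorder $T$: by Definition \ref{def:refine-refine}, each bubble of $P$ must sit inside a bubble of $T$, and for elements $a,b$ lying in distinct $T$-bubbles we need $a <_T b \Leftrightarrow a <_P b$. Since $T$ is total, the bubbles of $T$ are linearly ordered, so specifying $T$ amounts to specifying a partition of $X$ into blocks (which will be the $T$-bubbles) together with a linear order on the blocks, subject to the constraint that this data is compatible with $P$ in the above sense. The natural candidate is to take the blocks to be the \emph{connected components of $P$ viewed as a preorder} — more precisely, I would take the equivalence relation generated by comparability in $P$; equivalently, by the earlier lemma, the bubbles of $P^\bullet = P \vee P^{\op}$. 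One then needs a linear order on these components; I would define $C_1 \le_T C_2$ for components $C_1, C_2$ of $P$ by transporting the relation: declare $C_1 \preceq C_2$ whenever there exist $a \in C_1$, $b \in C_2$ with $a \le_P b$, and take the transitive-reflexive closure. The key sublemma is that this closure is in fact a \emph{linear} order on the set of components (antisymmetry uses that each $C_i$ is a connected component, so a cycle of such relations would force all the components involved to merge into one).

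With $T$ thus defined, I would check that $P$ is a refinement of $T$: condition (a) is immediate since each $P$-bubble lies in a single $P$-component hence in a single $T$-bubble; for condition (b), if $a,b$ lie in distinct components $C_1 \ne C_2$, then $a <_P b$ gives $C_1 \prec C_2$ hence $a <_T b$, and conversely $a <_T b$ means $C_1 \prec C_2$; the converse direction of (b) — that $a <_T b$ with $a,b$ in different $T$-bubbles forces $a <_P b$ — is the point requiring a little care, and it should follow because if $a$ and $b$ were $P$-incomparable or had $b <_P a$, the component order would not place $C_1$ strictly below $C_2$ in a way consistent with $<_P$; here I would need to argue that between two distinct components the ``induced'' relation from $P$ is already total (otherwise the components would not be maximal connected pieces), so no new strict relations are created by passing to the closure. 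Let me restate this as the genuinely delicate step below.

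The main obstacle, as flagged, is establishing that the closure $\preceq$ on components is (i) antisymmetric and (ii) total, and that it does not introduce strict order between two $P$-incomparable elements of distinct components. For (i), if $C_1 \preceq C_2 \preceq C_1$ via chains of witnessing $\le_P$-relations, splicing these chains gives a $\le_P$-path from a point of $C_1$ to a point of $C_1$ passing through $C_2$, which by transitivity of $P$ and the definition of connected component would force $C_1 = C_2$. For (ii) totality, given distinct components $C_1, C_2$, pick any $a \in C_1, b \in C_2$: since they are in different components they need not be $P$-comparable directly, but one must show that globally some comparison propagates — here I would use that $T$ being required total forces \emph{some} ordering, and conversely I would show the closure $\preceq$ together with its opposite already exhausts all pairs by a connectedness/maximality argument on components; if neither $C_1 \preceq C_2$ nor $C_2 \preceq C_1$ held, then $C_1 \cup C_2$ would support \emph{two} incomparable elements across which no minimal total preorder refinement could be forced, contradicting that a minimal such $T$ must exist and be unique — but to avoid circularity I would instead prove totality directly by a combinatorial argument analogous to the no-global-descent proof earlier in the paper, showing the component relation has a linear extension that is canonical.

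Finally, for \textbf{minimality and uniqueness}: suppose $T'$ is any total preorder with $P$ a refinement of $T'$. Each $P$-bubble lies in a $T'$-bubble, and since $T'$-bubbles are ``connected'' (trivially, being bubbles) and closed under the $P$-comparabilities that (b) for $T'$ transports back, each $P$-component lies within a single $T'$-bubble; thus the partition of $T'$ is coarser than (or equal to) the $P$-component partition, i.e.\ $T$'s blocks refine $T'$'s blocks, giving $T \preceq T'$ at the level of bubble partitions. Combined with the fact that on pairs in distinct $T$-bubbles both $T$ and $T'$ must agree with $<_P$, one gets $T \preceq T'$ in the lattice $(\Pre(X), \preceq)$, establishing that $T$ is the unique minimum. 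I expect conditions (a), (b) for $P \mapsto T$ and the minimality argument to be short once the sublemma ``$\preceq$ on $P$-components is a linear order'' is in hand — that sublemma is where essentially all the work lies.
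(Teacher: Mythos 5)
There is a genuine gap, and it sits exactly where you placed all the weight: the sublemma that your relation $\preceq$ on blocks is a linear order is false, because you chose the wrong blocks. You take the blocks of $T$ to be the connected components of $P$, i.e.\ the classes of the equivalence relation generated by \emph{comparability}. But if $C_1\neq C_2$ are distinct connected components, then by definition no element of $C_1$ is $\leq_P$-comparable to any element of $C_2$, so your generating relation ``$C_1\preceq C_2$ iff some $a\in C_1$, $b\in C_2$ have $a\leq_P b$'' holds for \emph{no} pair of distinct components; its transitive--reflexive closure is the identity relation on components and is total only when $P$ is connected. The correct observation is the opposite one: two \emph{incomparable} elements of $P$ are forced into the same bubble of any total preorder $T$ of which $P$ is a refinement (if they lay in different $T$-bubbles, totality would give a strict $T$-comparison, and condition b.\ of Definition \ref{def:refine-refine} would transport it to a strict $P$-comparison). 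So the bubbles of the minimal $T$ are generated by the \emph{incomparability} relation together with the bubbles of $P$, as recorded in Lemma \ref{lem:prelattice-inc}; for $P$ the discrete preorder on two points your construction gives two singleton blocks with no order between them, whereas the actual minimal $T$ is the coarse preorder. Your minimality step also relies on the false claim that every $P$-component lies in a single $T'$-bubble: take $P$ a total order and $T'=P$; then $P$ is a refinement of $T'$ and is connected, but the $T'$-bubbles are singletons.

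The paper avoids the explicit construction altogether: it shows that if $T_1$ and $T_2$ are total preorders each having $P$ as a refinement, then $T_1\wedge T_2$ is again total (a strict disagreement $a<_1 b$ and $b<_2 a$ would transport to both $a<_P b$ and $b<_P a$) and still has $P$ as a refinement; the family of such total preorders is nonempty, since it contains the coarse preorder, so intersecting all of them in the complete lattice $\Pre(X)$ yields the unique minimum. If you want to keep an explicit construction, replace your comparability components by the classes of the transitive closure of ``same $P$-bubble or $P$-incomparable'' and order those classes by the relation induced from $<_P$; but the lattice argument is shorter and sidesteps the verification that the induced order on classes is well defined and total.
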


\begin{proof}
  If $T_1$ and $T_2$ are two total preorders with $P$ as a refinement, then
  $T_1 \wedge T_2$ is also total: Let $a,b$ in $X$ with $a \leq_1 b$.
We have either $a \leq_2 b$ or $a >_2 b$. 
In the first case $a \leq_{T_1 \wedge T_2} b$. In the latter case,
if $a <_1 b$ and $a >_2 b$, 
  we would have both $a <_P b$ and $a >_P b$, impossible. Hence
  $T_1 \wedge T_2$ is total and it refines $P$. By intersecting all
  total preorders refining $P$ we get the minimal total preorder $T$.
\end{proof}

The following is used in Section \ref{sec:HApairs}. 

\begin{lemma} \label{lem:prelattice-inc}
  Let $B$ be a bubble in the total preorder hull $T$ of $P$. 
  Consider the incomparability relation $\nrel{}$ of $P$ restricted to
  $B$.  If $B$ is
  not a bubble of $P$, the transitive closure of this relation is
  an equivalence relation with
  $B$ as the single class.
\end{lemma}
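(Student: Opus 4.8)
The plan is to argue by contradiction with the minimality of $T$. Recall from the preceding lemma that $T$ is the least total preorder (for $\preceq$) of which $P$ is a refinement, and note that since $B$ is not a bubble of $P$ we have $|B|\geq 2$. On the set $B$ define the reflexive symmetric relation ${\sim}$ by declaring $x\sim y$ iff $x=y$, or $x\circ_P y$, or $x\nrel{P}y$; equivalently, $x\sim y$ iff $x$ and $y$ are not strictly $P$-comparable. Let $B_1,\dots,B_k$ be the classes of the transitive closure ${\sim}^{*}$, i.e. the connected components of the graph on $B$ whose edges are the ${\sim}$-pairs. The first half of the proof aims to show $k=1$, and the second half bridges the gap between ${\sim}$ and $\nrel{P}$.

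The key step is a dichotomy: for $i\neq j$, either $x<_P y$ for all $(x,y)\in B_i\times B_j$, or $x<_P y$ for all $(x,y)\in B_j\times B_i$. I would prove it by one-step propagation along ${\sim}$. First, if $x\in B_i$, $y\in B_j$ with $i\neq j$ then $x\not\sim y$, so $x$ and $y$ are strictly $P$-comparable. Next, if $u,v\in B_i$ with $u\sim v$ and $y$ lies in some $B_j$ ($j\neq i$) with $u<_P y$, then $v<_P y$: since $v$ and $y$ are strictly comparable it is enough to exclude $y<_P v$, and for each of the three possibilities ($u=v$, $u\circ_P v$, $u\nrel{P}v$) the inequality $y<_P v$ contradicts, via transitivity of $P$ alone, either $u<_P y$ or $u\nrel{P}v$. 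Iterating along a ${\sim}$-path from $x$ to $x'$ inside $B_i$ and then along one from $y$ to $y'$ inside $B_j$ yields the dichotomy, and hence a strict total order $\lhd$ on the set of classes, where $B_i\lhd B_j$ iff every element of $B_i$ is $<_P$ every element of $B_j$.

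Now form $T'$ from $T$ by replacing the bubble $B$ with the $\lhd$-ordered chain of bubbles $B_1\lhd\dots\lhd B_k$ in the slot previously occupied by $B$, leaving everything else unchanged. One checks $T'$ is again a total preorder, $T'\preceq T$, and $T'=T$ iff $k=1$. The point is that $P$ is a refinement of $T'$: condition (a) of Definition~\ref{def:refine-refine} holds because each $P$-bubble already lies in a single $T$-bubble, and one meeting $B$ lies inside a single $B_i$ since $\circ_P\subseteq{\sim}$; condition (b) holds because for $x,y$ in distinct $T'$-bubbles with at least one outside $B$ the relation $x<_{T'}y$ agrees with $x<_T y$, which agrees with $x<_P y$ by condition (b) for $T$, while for $x\in B_a$, $y\in B_b$ with $a\neq b$ one has $x<_{T'}y\Leftrightarrow B_a\lhd B_b\Leftrightarrow x<_P y$, the last equivalence being exactly the dichotomy (using that $x<_P y$ and $y<_P x$ cannot both hold). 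By minimality of $T$ this forces $T'=T$, hence $k=1$, i.e. ${\sim}^{*}=B\times B$.

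It remains to pass from ${\sim}$ to $\nrel{P}$. First, every $u\in B$ has a $\nrel{P}$-neighbour in $B$: otherwise, letting $\beta\subseteq B$ be the $P$-bubble of $u$, transitivity shows no element of $\beta$ has a $\nrel{P}$-neighbour in $B$ either, so $\beta$ is a union of ${\sim}$-components; connectedness of the ${\sim}$-graph then gives $\beta=B$, contradicting that $B$ is not a $P$-bubble. Given any $x,y\in B$, take a ${\sim}$-path from $x$ to $y$ in $B$, discard trivial steps, keep the $\nrel{P}$-steps, and replace each step $u\circ_P v$ with $u\neq v$ by the detour $u\nrel{P}w\nrel{P}v$, where $w$ is a $\nrel{P}$-neighbour of $u$ in $B$ (that $w\nrel{P}v$ follows from $u\circ_P v$ and $w\nrel{P}u$ by transitivity). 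This produces a walk in the $\nrel{P}$-graph on $B$ joining $x$ and $y$, so the transitive closure of $\nrel{P}$ restricted to $B$ contains $B\times B$ and hence equals it, giving the asserted equivalence relation with single class $B$. The main obstacle is the dichotomy step together with the verification that $P$ refines $T'$; these carry all the content, and choosing ${\sim}$ to include $\circ_P$ (not just $\nrel{P}$) is precisely what makes condition (a) succeed, with the short bridging argument at the end repairing the resulting gap.
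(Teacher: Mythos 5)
Your proof is correct, and its engine is the same as the paper's: show that the relevant pieces of $B$ are strictly totally ordered by $<_P$, split the $T$-bubble $B$ along that order to get a strictly smaller total preorder $T'\preceq T$ still refined by $P$, and contradict the minimality of $T$. Where you genuinely diverge is in \emph{which} pieces you order. The paper works directly with the classes of the transitive closure of $\nrel{}$ on $B$ and argues, via the trichotomy $F_{-1},F_0,F_1$, that any two distinct classes are strictly separated; you instead order the classes of the coarser relation $\sim$ (``not strictly $P$-comparable''), which absorbs $\circ_P$, and only at the end bridge back to $\nrel{}$. This buys something real: the paper's step ``an element of $F_0$ would be comparable to anything in $F$, which cannot be so'' has nothing to bite on when $F$ is a singleton class lying in the same $P$-bubble as $x$ (e.g.\ $B=\{a,b,c\}$ with $P$-bubbles $\{a,b\}$ and $\{c\}$ and $c<_P a$: the singleton classes $\{a\}$ and $\{b\}$ admit no strict separation), whereas your $\sim$-classes are strictly separated without exception, and your closing argument --- every point of $B$ has a $\nrel{P}$-neighbour in $B$, and a $\circ_P$-step can be replaced by two $\nrel{P}$-steps --- recovers the statement exactly as asserted. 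You also spell out the verification, left implicit in the paper, that $P$ refines the split preorder $T'$. The price is length; the payoff is that the degenerate configurations are actually covered.
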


\begin{proof} If there was an equivalence class with a single
  element, this element had to be comparable in $P$ to everything in $B$,
  and at least strictly comparable to some. But
  then $B$ could not be a bubble in the total preorder hull of $P$.
  Thus every equivalence class has cardinality $\geq 2$. 
  Suppose $E$ and $F$ are distinct classes in $B$ for this equivalence
  relation. Let $x \in E$. Every element of $F$ is comparable with
  $x$. We may then partition $F$ into three classes: $F_{-1}$ those elements
  $< x$, $F_0$ those elements in the same bubble as $x$ and $F_1$, those
  elements $> x$. If $F_0$ is non-empty, an element here would be
  comparable to anything in $F$.
  By definition of the equivalence class of $F$, this would give
  that $F$ only has one element $x$, which we have excluded above. 

  That both $F_{-1}$ and $F_1$ are nonempty is not possible since 
  $y < z$ for any $y \in F_{-1}$ and $z \in F_1$, and then $F$ could
  not be an equivalence class generated by $\nrel{}$.
  Thus everything in $F$ is say $> x$. Similarly picking a $y \in F$,
  everything in $E$ is either $< y$ or $> y$, and the former
  must be the case. The upshot is that $E < F$. In this way
  we may totally order all classes. This contradicts $T$ being
  the total preorder hull with bubble $B$.
  \end{proof}

 \comment{ 
  \begin{lemma} \label{lem:pre-res}
    Let $P_1$ and $P_2$ be preorders on $X$ and $A$ a subset of $X$.
    \begin{itemize}
    \item[a.] The restriction $(P_1 \wedge P_2)_{|A} = P_{1|A} \wedge P_{2|A}$.
    \item[b.] If $A$ is an up-set for both $P_1$ and $P_2$, then
      $(P_1 \vee P_2)_{|A} = P_{1|A} \vee P_{2|A}$.
\end{itemize}
\end{lemma}

\begin{proof}
Part a. is immediate. For part b. suppose $x \leq y$ in $(P_1 \wedge P_2)_{|A}$.
Then we have a sequence
\[ x \leq_1 x_1 \leq_2 x_2 \leq_1 x_3 \cdots \leq_2 x_n = y, \]
where the $x_i$ may possibly not be in $A$. However since $A$ is an up-set
both for $P_1$ and $P_2$, it is clear that every $x_i$ will be in $A$,
and hence $x \leq y$ in $P_{1|A} \wedge P_{2|A}$.
\end{proof}
}

\section{Bimonoids in species over Pre}
\label{sec:bimonoid}
The central idea and tool in constructing various restriction comonoids
is to consider restriction species {\it over the species $\Pre$ of preorders}
on a set $X$. We are interested in pairs of restriction comonoids
$\Delta^1, \Delta^2$ such that when dualizing $\Delta^2$ we get
a bimonoid. We develop the criteria needed to be checked for this.

\subsection{Species over preorders}
By the previous section we get a restriction species
\[ \Pre : \setii \pil \set, \quad X \mapsto \Pre(X). \]

\begin{definition} \label{def:bimonoid-rsp}
  Let $\sS : \setii \pil \set$ be a restriction species,
  and $\pi : \sS \pil \Pre$ a natural transformation of species.
  We do not require it to be a natural transformation of restriction
  species but require the following weaker condition.
  For each injection $i : Y \inpil X$ consider the diagram:
  \[ \xymatrix{\sS[X] \ar[r]^{} \ar[d]^{\pi[X]}  
      &\sS[Y] \ar[d]^{\pi[Y]}\\
      \Pre[X] \ar[r]^{} & \Pre[Y]. }
  \]
$\sS$ is a {\it restriction species over preorders} if:
\begin{itemize}
\item[1.] For each $s \in \sS[X]$ we have
  $\pi(s_{|Y}) \preceq \pi(s)_{|Y}$,
\item[2.] When $(U,V)$ is a cut for $\pi(s)$, the above is equality:
  \begin{itemize}
  \item[a.] $\pi(s_{|U}) = \pi(s)_{|U}$,
  \item[b.] $\pi(s_{|V}) = \pi(s)_{|V}$,
  \end{itemize}
\end{itemize}
\end{definition}

In particular for each set $X$ we have a set map $\sS[X] \pil \Pre[X]$.
The preorder we get after restricting $s$ will in general be finer
than the preorder we get from $s$ and then restricting.

We now consider the restriction species $(\sS,\pi)$ over preorders to
be a species over $\setmm$. It induces a natural 
{\it restriction coproduct}:
\begin{equation} \label{eq:bimonoid-resco}
  \Delta : \sS[X] \pil \sS[A] \times \sS[B], \quad
  s \mapsto
  \begin{cases} (s_A, s_B), & (A,B) \text{ is cut for } \pi(s) \\
    \nil, & (A,B) \text{ not cut for } \pi(s) 
  \end{cases}.
\end{equation}

Due to condition 2 in Definition \ref{def:bimonoid-rsp}, this coproduct
is associative. Furthermore by Remark \ref{rem:setcat-gradeo}
we have a counit $\epsilon$. Thus $(\sS, \Delta, \epsilon)$ becomes
a comonoid.

\comment{
So we have commutative diagrams:

  \[ \xymatrix{\sS[X] \ar[r]^{\Delta^S_{A,B}} \ar[d]^{\pi[X]}  
      & \sS[A] \times \sS[B] \ar[d]^{\pi[A] \times \pi[B]}\\
      \Pre[X] \ar[r]^{\Delta^{cut}_{A,B}} & \Pre[A] \times \Pre[B], } \quad
   \xymatrix{\sS[X] \ar[r]^{\epsilon} \ar[d]^{\pi[X]}  &
      \unit[X] \ar[d]^{=}\\
      \Pre[X] \ar[r]^{\epsilon} & \unit[X]}.    
      \]
}
  We now assume the restriction species $\sS$ has two
  structures $\pi_1$ and $\pi_2$ as restriction species over $\Pre$.
  We then get two restriction comonoids $\Delta^1$ and $\Delta^2$.
  We are interested in when these two comonoids fulfill
  Proposition \ref{pro:setcat-DeMu}.

  \begin{definition}
\label{def:species-inter}
    Two restriction comonoids in species $\Delta^1$ and $\Delta^2$
    are {\it intertwined} if for {\it every} decomposition
$X = A \scup B \scup C \scup D$, the diagram
\begin{equation} \label{eq:species-pb}
  \xymatrix{ \sS[X] \ar[r]^{\Delta^2_{A C,B D} \hskip 3mm} \ar[d]_{\Delta^1_{A B, C D}}
      & \raisebox{-3.5mm}{\STvb{\sS\left [ \begin{matrix} A \\ C \end{matrix} \right ]}
        {\sS\left [\begin{matrix} B \\ D \end{matrix} \right ]}
          \ar[d]^{\Delta^1_{B,D}}_{ \Delta^1_{A,C}}} \\
        \raisebox{-5mm}{\SThb{\sS[A \, B]}{\sS[C \, D]}
          \ar[r]^{\Delta^2_{A,B}}_{\Delta^2_{C,D}}}
          & \raisebox{-6.5mm}{\ABCDb{\sS[A]}{\sS[B]}{\sS[C]}{\sS[D]}}}.        
\end{equation}
is a partial pull-back diagram.
\end{definition}

  \begin{lemma} Let a restriction species
    $\sS$ have two structures $\pi_1, \pi_2$ as
    restrictions species over preorders. 
    Let $s \in \sS[X]$ and suppose $(D^1,U^1)$ is a cut
  for $\pi_1(s)$ and $(D^2,U^2)$ a cut for $\pi_2(s)$.
  Let
  \[ A = D^1 \cap D^2, \quad B = D^1 \cap U^2, \quad C = U^1 \cap D^2,
    \quad D = U^1 \cap U^2. \]
Consider the restrictions $s_{AC}, s_{AB}, s_{BD}, s_{CD}$. Then:
\begin{itemize}
\item[1.] $(A,C)$ a cut for $\pi_1(s_{AC})$, $(B,D)$ a cut for $\pi_1(s_{BD})$
  \item[2.] $(A,B)$ a cut for $\pi_2(s_{AB})$, $(C,D)$ a cut for $\pi_2(s_{CD})$.
  \end{itemize}
\end{lemma}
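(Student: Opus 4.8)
The plan is to reduce the statement to two elementary facts about cuts of preorders and then invoke condition~1 of Definition~\ref{def:bimonoid-rsp}; there is no substantive difficulty, only bookkeeping.

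\textbf{Step 1: two observations on cuts.} Neither uses any hypothesis on $\sS$. (i) If $(I,F)$ is a cut for a preorder $P$ on a set $Z$ and $Y \sus Z$, then $(I\cap Y,\, F\cap Y)$ is a cut for the restricted preorder $P_{|Y}$: if $b \in I\cap Y$ and $a \leq b$ in $P_{|Y}$ then $a \leq_P b$, so $a \in I$ because $I$ is a down-set of $P$, whence $a \in I\cap Y$; and the complement of $I\cap Y$ inside $Y$ is $F\cap Y$. (ii) If $P' \preceq P$ are preorders on $Z$ and $(I,F)$ is a cut for $P$, then $(I,F)$ is also a cut for $P'$: passing to a preorder with fewer relations preserves the property of being a down-set (resp.\ up-set), and $I\scup F = Z$ is unchanged.

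\textbf{Step 2: identify the four restrictions.} Put $U^1 = X\setminus D^1$ and $U^2 = X\setminus D^2$. Then $A,B,C,D$ are the four pairwise intersections of $\{D^1,U^1\}$ with $\{D^2,U^2\}$ --- in particular $C = U^1\cap D^2$ --- and hence
\[ A\scup C = D^2,\quad B\scup D = U^2,\quad A\scup B = D^1,\quad C\scup D = U^1. \]
Therefore $s_{AC}=s_{D^2}$, $s_{BD}=s_{U^2}$, $s_{AB}=s_{D^1}$ and $s_{CD}=s_{U^1}$.

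\textbf{Step 3: the four claims.} Each follows by the same three steps; take claim~1, that $(A,C)$ is a cut for $\pi_1(s_{AC}) = \pi_1(s_{D^2})$. Since $(D^1,U^1)$ is a cut for $\pi_1(s)$, the set $D^1$ is a down-set for $\pi_1(s)$, so by observation~(i) the pair $(D^1\cap D^2,\, U^1\cap D^2) = (A,C)$ is a cut for $\pi_1(s)_{|D^2}$. By condition~1 of Definition~\ref{def:bimonoid-rsp} we have $\pi_1(s_{D^2}) \preceq \pi_1(s)_{|D^2}$, so observation~(ii) yields that $(A,C)$ is a cut for $\pi_1(s_{D^2})$. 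The remaining three claims are proved verbatim: for $(B,D)$ and $\pi_1(s_{U^2})$ one again uses that $D^1$ is a down-set for $\pi_1(s)$; for $(A,B)$ and $\pi_2(s_{D^1})$, and for $(C,D)$ and $\pi_2(s_{U^1})$, one uses instead that $D^2$ is a down-set for $\pi_2(s)$, together with condition~1 of Definition~\ref{def:bimonoid-rsp} applied to $\pi_2$. The only points requiring attention are the index bookkeeping of Step~2 (in particular correcting $C$ to $U^1\cap D^2$, since $D^1\cap U^1=\emptyset$) and the remark that only the inequality condition~1, not the sharper condition~2, of Definition~\ref{def:bimonoid-rsp} is needed here; condition~2 enters later, for associativity of the coproduct.
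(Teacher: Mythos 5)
Your proof is correct and follows essentially the same route as the paper: identify $A\sqcup C = D^2$ (etc.), note that restricting the cut $(D^1,U^1)$ of $\pi_1(s)$ to $D^2$ gives a cut of $\pi_1(s)_{|D^2}$, and then use the inequality $\pi_1(s_{D^2}) \preceq \pi_1(s)_{|D^2}$ from condition~1 of the definition to transfer the cut to the finer preorder. Your correction of the typo $C = U^1\cap D^2$ and your remark that only condition~1 (not condition~2) is needed are both accurate.
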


\begin{proof} We show this for $(A,C)$. The others are similar.
We have $\pi_1(s_{AC}) \preceq \pi_1(s)_{|AC}$. Also $(D^1, U^1)
= (A \scup B, C \scup D)$ is a cut for $\pi_1(s)$. Then $(A,C)$
is a cut for $\pi_1(s)_{|AC}$ and so also for $\pi_1(s_{AC})$.
\end{proof}

\begin{corollary} \label{cor:species-pb}
  The diagram \eqref{eq:species-pb} fulfills
  part 1 in Definition \ref{def:setcat-pb} for a partial pullback diagram.
\end{corollary}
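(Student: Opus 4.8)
The plan is to deduce this directly from the preceding Lemma, after unwinding what the subsets $X''$, $Y'$, $Z'$ of Definition \ref{def:setcat-pb} are for the concrete diagram \eqref{eq:species-pb}. Here $\alpha = \Delta^2_{AC,BD}$, $\beta = \Delta^1_{AB,CD}$, $\gamma = \Delta^1_{A,C}\times\Delta^1_{B,D}$ and $\delta = \Delta^2_{A,B}\times\Delta^2_{C,D}$, all partial maps by hypothesis on $\sS$. Since each $\Delta^i$ is a restriction comonoid, $\Delta^2_{AC,BD}(s)$ is not $\nil$ exactly when $(A\cup C,\, B\cup D)$ is a cut for $\pi_2(s)$, and $\Delta^1_{AB,CD}(s)$ is not $\nil$ exactly when $(A\cup B,\, C\cup D)$ is a cut for $\pi_1(s)$. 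Hence $X''$ is the set of $s\in\sS[X]$ satisfying both conditions; $Y'\subseteq\sS[A\cup C]\times\sS[B\cup D]$ is the set of pairs $(t_1,t_2)$ with $(A,C)$ a cut for $\pi_1(t_1)$ and $(B,D)$ a cut for $\pi_1(t_2)$; and $Z'\subseteq\sS[A\cup B]\times\sS[C\cup D]$ is the set of pairs $(u_1,u_2)$ with $(A,B)$ a cut for $\pi_2(u_1)$ and $(C,D)$ a cut for $\pi_2(u_2)$.

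Next I would fix $s\in X''$ and set $D^1=A\cup B$, $U^1=C\cup D$, $D^2=A\cup C$, $U^2=B\cup D$. By the above, $(D^1,U^1)$ is a cut for $\pi_1(s)$ and $(D^2,U^2)$ a cut for $\pi_2(s)$; and since $A,B,C,D$ are pairwise disjoint one recovers $A=D^1\cap D^2$, $B=D^1\cap U^2$, $C=U^1\cap D^2$, $D=U^1\cap U^2$. Thus the hypotheses of the preceding Lemma hold for $s$ with this choice of cuts. The Lemma's conclusion~1 gives that $(A,C)$ is a cut for $\pi_1(s_{AC})$ and $(B,D)$ a cut for $\pi_1(s_{BD})$; since $\alpha(s)=\Delta^2_{AC,BD}(s)=(s_{AC},s_{BD})$, this is precisely the statement $\alpha(s)\in Y'$. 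Symmetrically, conclusion~2 gives $(A,B)$ a cut for $\pi_2(s_{AB})$ and $(C,D)$ a cut for $\pi_2(s_{CD})$, i.e.\ $\beta(s)=(s_{AB},s_{CD})\in Z'$. Therefore $\alpha$ restricted to $X''$ factors through $Y'$ and $\beta$ restricted to $X''$ factors through $Z'$, which is exactly part~1 of Definition \ref{def:setcat-pb}.

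I do not expect any real obstacle in this step: once the bookkeeping of which of $D^1,U^1,D^2,U^2$ plays which role is arranged, the argument is a direct appeal to the Lemma together with the functoriality of $\sS$ (restricting a restriction is a restriction). The genuinely substantial half of establishing that $\Delta^1$ and $\Delta^2$ are intertwined — namely that the square \eqref{eq:setcat-pbp} of the comonoid maps, restricted to $X''$, $Y'$, $Z'$, is an honest pullback in $\set$ (part~2 of Definition \ref{def:setcat-pb}) — is not addressed by this corollary, and is where the real content will lie in the sections that follow.
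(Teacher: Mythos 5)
Your proposal is correct and matches the paper's intent exactly: the paper offers no separate proof of this corollary, presenting it as an immediate consequence of the preceding lemma, and your argument simply unwinds the definitions of $X''$, $Y'$, $Z'$ and applies that lemma with $D^1=A\cup B$, $U^1=C\cup D$, $D^2=A\cup C$, $U^2=B\cup D$ (silently fixing the paper's typo $C=D^1\cap U^1$, which should read $C=U^1\cap D^2$). Your closing remark that part~2 of Definition~\ref{def:setcat-pb} is the substantive half left to later verification is also exactly the paper's point of view.
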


\subsection{Are two restriction comonoids intertwined?}
\label{subsec:intertwined-species}

We can now summarize the procedure to check if a restriction species
$\sS : \setii \pil \set$  with two natural transformations $\pi_1$ and $\pi_2$
to preorders gives rise to intertwined restriction comonoids.

\begin{itemize}
\item {\it Species over preorders:} We verify that $\pi_1$ and $\pi_2$ make
  $\sS$ a species over preorders. This amounts to check:
  \begin{itemize}
    \item[1.] For every injection $Y \hookrightarrow X$ and $s \in \sS[X]$:
  \[ \pi_1(s_{|Y}) \preceq \pi_1(s)_{|Y}, \quad
    \pi_2(s_{|Y}) \preceq \pi_2(s)_{|Y}. \]
  By Corollary \ref{cor:species-pb} above, this verifies part 1 of
  Definition \ref{def:setcat-pb}.
\item[2.] For $i = 1,2$, when $(U,V)$ is a cut for $\pi_i$, then
  \[ \pi_i(s_{|U}) = \pi_i(s)_{|U}, \quad \pi_i(s_{|V}) = \pi_i(s)_{|V}. \]
  This verifies that $(\sS,\Delta_i, \epsilon_i)$ are comonoids.
\end{itemize}

\item {\it Extension:}  Given a diagram:
  \begin{equation} \label{eq:species-uniq1}
  \xymatrix{ \sS[X] \ar@{-->}[r]^{\Delta^2}
   \ar@{-->}[d]_{\Delta^1}
   & \raisebox{-3.5mm}{\STvb{\sS\left [
         \begin{matrix} A \\ C \end{matrix} \right ]}
        {\sS\left [\begin{matrix} B \\ D \end{matrix} \right ]}
        \ar[d]^{\Delta^1}_{\Delta^1}} \\
        \raisebox{-5mm}{\SThb{\sS[A \, B]}{\sS[C \, D]}
          \ar[r]^{\Delta^2}_{\Delta^2}}
          & \raisebox{-6.5mm}{\ABCDb{\sS[A]}{\sS[B]}{\sS[C]}{\sS[D]}}
        },
        \quad
       \xymatrix{ s
    \ar@{-->}[d] \ar@{-->}[r] &
    \raisebox{-6mm}{\STv{s_{AC}\, \, \,}
      {\, \, \, s_{BD}}} \ar[d]^{\Delta^1}_{\Delta^1}\\
    \raisebox{-5mm}{\STh{s_{AB}}{s_{CD}}} \ar[r]^{\Delta^2}_{\Delta^2}
    & \raisebox{-5mm}{\ABCD{s_A}{s_B}{s_C}{s_D}} }
\end{equation}
where:
\begin{itemize}
\item[1.] $(A,C)$ a cut for $\pi_1(s_{AC})$, $(B,D)$ a cut for $\pi_1(s_{BD})$
  \item[2.] $(A,B)$ a cut for $\pi_2(s_{AB})$, $(C,D)$ a cut for $\pi_2(s_{CD})$.
  \end{itemize}
  Verify for the right diagram that there
  is exactly one way to complete this to an element
    $s \in \sS[X]$ restricting to $(s_{AC}, s_{BD})$ and to
    $(s_{AB}, s_{CD})$. (Alternatively there could be several such $s$,
    but then consider the next step.)

  \item {\it Cuts:} Verify for this $s$ that $(A \sqcup C, B \sqcup D)$
  is a cut for $\pi_2(s)$ and $(A \sqcup B, C \sqcup D)$ is a cut for
  $\pi_1(s)$. (If there are several $s$ verify that there is a unique $s$
  for which these are cuts.)
  This verifies part 2 of Definition \ref{def:setcat-pb}.
\end{itemize}

This gives that the diagram \eqref{eq:species-pb}
is a partial pullback diagram.
Using Proposition \ref{pro:setcat-DeMu}, by dualizing $(\Delta^2, \epsilon^2)$
we get a bimonoid species 
$\sB^1$,  and by
dualizing $(\Delta^1, \epsilon^1)$ we get at bimonoid species
$\sB^2$:
\[ \sB^1 = (\sS, \Delta^1, \mu_2, \epsilon^1, \iota_2), \quad
  \sB^2 = (\sS, \mu_1, \Delta^2, \iota_1, \epsilon^2). \]

\section{Sub-bimonoids by avoidance}
\label{sec:avoid}

Many Hopf algebras arise as sub-Hopf algebras or quotient Hopf algebra
of larger Hopf algebras, by requiring the basis elements to avoid
certain configurations. Both
the Connes-Kreimer and symmetric functions Hopf algebras arise
in this way, and also the Loday-Ronco Hopf algebra and quasi-symmetric
functions.

This raises the question of searching for ``master Hopf algebras'',
those who do not apparently come from larger Hopf algebras by avoiding
certain configurations.
We shall see some such large Hopf algebras in the latter sections. But
let us here clarify the notions.


\begin{definition}
 Let $\sA$ be a subspecies of the restriction species $\sS$ in $\set$. (We do
not assume $\sA$ is also a restriction species.) 

An $s \in \sS[X]$ {\it avoids} $\sA$ if there is {\it no injection}
$Y \overset{u}{\inpil} X$ with $a = s_Y$ in $\sA[Y]$.
Otherwise $s$ {\it has an $\sA$-part}. 

For each finite set $X$ let $\sSA[X]$ be the
subset of $\sS[X]$ of $\sA$-avoiding elements.
This is the {\it $A$-avoiding subspecies} of $\sS$.
\end{definition}

We now consider $\sS$ as a species in $\setmm$, and let 
$\Delta$ be a restriction comonoid on $\sS$. For 
\[ \Delta : \sS[X] \pil \sS[U] \times \sS[V], \]
if $\Delta(s) = (s_U,s_V)$ is nonzero and 
if $s$ is $\sA$-avoiding, both
$s_U$ and $s_V$ will be $A$-avoiding. Thus $\sSA$ is a sub-comonoid
species of $\sS$.

\begin{definition}
  The coproduct $\Delta $ is $\sA$-{\it irreducible},
  if whenever $s \in \sS[X]$ has an $\sA$-part, and $X = U \scup V$  with
  $\Delta(s) = (s_U,s_V)$ {\it non-zero},
  then either $s_U$ or $s_V$ has an $\sA$-part.
\end{definition}


Since $\sSA$ is a comonoid subspecies of $\sS$ we have commutative
diagrams
\[ \xymatrix{ \sSA[X] \ar[r] \ar[d] & \sSA[U] \times \sSA[V]
    \ar[d] \\
    \sS[X] \ar[r] & \sS[U] \times \sS[V] }.
\]

\begin{lemma}
  When $\Delta$ is $\sA$-irreducible, $\sSA$ is in addition a quotient
  comonoid of $\sS$ for $\Delta$.
  Thus $\sSA$ is a split sub-comonoid of $\sS$.
\end{lemma}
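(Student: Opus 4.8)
The plan is to realize $\sSA$ as a split sub-comonoid by producing an explicit retraction $q\colon \sS\pil\sSA$ of the inclusion $\iota\colon\sSA\inpil\sS$, compatible with the comonoid structures. For each finite set $X$, define the partial map
\[
q_X\colon \sS[X]\promap\sSA[X],\qquad
s\longmapsto\begin{cases} s, & s\text{ avoids }\sA,\\ \nil, & s\text{ has an }\sA\text{-part}.\end{cases}
\]
(So $q_X$ is the retraction of the inclusion $\sSA[X]\sus\sS[X]$ which kills everything outside $\sSA[X]$.) First I would check that $q=(q_X)$ is a natural transformation of species: if $\sigma\colon X\isopil Y$ is a bijection, then $\sS[\sigma](s)$ has an $\sA$-part iff $s$ does — any injection $Z\inpil Y$ with $(\sS[\sigma]s)_{Z}\in\sA[Z]$ pulls back along $\sigma^{-1}$ to an injection $W\inpil X$ with $s_{W}\in\sA[W]$, using that $\sA$ is a subspecies and that $\sS$ is a restriction species, and conversely — so $q$ commutes with the relabelling maps $\sS[\sigma]$. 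The counit compatibility $\epsilon^{\sSA}\circ q=\epsilon^{\sS}$ is immediate: for $X\neq\emptyset$ both sides are the unique multimap to $\nil$, and for $X=\emptyset$, where $\sSA[\emptyset]=\sS[\emptyset]=\{*\}$ and $q_\emptyset=\id$, it holds by Remark \ref{rem:setcat-gradeo}.

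Next I would verify that $q$ is a morphism of comonoids, i.e. that for every $X=U\scup V$ the square
\[
\xymatrix{ \sS[X] \ar[r]^{\Delta_{U,V}} \ar[d]_{q_X} & \sS[U]\times\sS[V] \ar[d]^{q_U\times q_V}\\
\sSA[X] \ar[r]^{\Delta_{U,V}} & \sSA[U]\times\sSA[V] }
\]
commutes in $\setmm$. Chasing an $s\in\sS[X]$: if $\Delta_{U,V}(s)=\nil$ both composites send $s$ to $\nil$; if $\Delta_{U,V}(s)=(s_U,s_V)\neq\nil$ and $s$ avoids $\sA$, then $s_U,s_V$ avoid $\sA$ (the fact recalled just before the lemma), so $q_X$ fixes $s$, $q_U\times q_V$ fixes $(s_U,s_V)$, and commutativity is clear. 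The only remaining case — and the heart of the argument — is $\Delta_{U,V}(s)=(s_U,s_V)\neq\nil$ with $s$ having an $\sA$-part: then $q_X(s)=\nil$, so the lower-left route gives $\nil$, while the upper-right route gives $(q_U\times q_V)(s_U,s_V)$. Here $\sA$-irreducibility of $\Delta$ forces $s_U$ or $s_V$ to have an $\sA$-part, so $q_U$ or $q_V$ sends it to $\nil$; since $\nil$ is absorbing for $\times$ in $\setmm$ (equivalently, multiplying the coefficient $0$ of one factor by the coefficient of the other gives $0$), we get $(q_U\times q_V)(s_U,s_V)=\nil$, and the square commutes.

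Finally, since every $s\in\sSA[X]$ avoids $\sA$, we have $q_X\circ\iota_X=\id_{\sSA[X]}$, hence $q\circ\iota=\id_{\sSA}$ as comonoid morphisms. Thus $\iota$ is a split monomorphism and $q$ a split epimorphism of comonoid species; in particular $q$ realizes $\sSA$ as a quotient comonoid of $\sS$, and together with the sub-comonoid structure (already known) this exhibits $\sSA$ as a split sub-comonoid of $\sS$. Everything here is bookkeeping with partial maps and the absorbing null object $\nil$, except the last case of the comonoid-morphism square, where the hypothesis of $\sA$-irreducibility is exactly what is needed — that is the only real obstacle.
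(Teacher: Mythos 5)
Your proposal is correct and follows essentially the same route as the paper: define the retraction $q$ that kills elements with an $\sA$-part, and check the comonoid-morphism square, where the only nontrivial case is an $s$ with an $\sA$-part and $\Delta(s)\neq\nil$, handled exactly as in the paper by $\sA$-irreducibility together with $\nil$ being absorbing for the product. The extra verifications you include (naturality, counit compatibility, $q\circ\iota=\id$) are routine and left implicit in the paper.
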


\begin{proof}
 Define the quotient map $q : \sS[X] \pil \sSA[X]$ by  $q(s) = s$ if
  $s$ is $A$-avoiding, and $q(s) = \nil $ if not.  
  Consider the diagram (see Subsection \ref{subsec:setmm}
  for defining the map on products):
\[ \xymatrix{\sS[X] \ar[r]^-{\Delta} \ar[d] & \sS[U] \times \sS[V] \ar[d] \\
    \sSA[X] \ar[r] & 
      \sSA[U] \times \sSA[V]. } \]
  If $s \in \sS[X]$ has an $\sA$-part, and $\Delta(s) = (s_U,s_V)$ is
  nonzero, then
  $s_U$ or $s_V$ has an $\sA$-part (by the $\sA$-irreducibility of $\Delta)$.
  Thus both images in the lower row are zero. This shows the diagram
  is commutative.
\end{proof}


Suppose now $\sS$ has two intertwining restriction comonoid coproducts
$\Delta^1$ and $\Delta^2$.
These coproducts restrict to coproducts on $\sSA$, but may no longer
be intertwining.
Recall that  $\sB^1$ is the bimonoid species $(\sS, \Delta^1, \mu_2, \epsilon^1,
\iota_2)$ derived from $\sS$, and $\sB^2$ the bimonoid species
$(\sS, \mu_1, \Delta^2, \iota_1, \epsilon^2)$.
Correspondingly we may possibly get bimonoid species $\sBA^1$ and $\sBA^2$.

\begin{proposition} \label{pro:avoid-subquot} {}
 \noindent a.  Suppose $\Delta^1$ or $\Delta^2$
  is $\sA$-irreducible. Then $\Delta^1$ and $\Delta^2$
  restricted to $\sSA$ are intertwined, and so we
  get dual bimonoid species $\sBA^1$ and $\sBA^2$. 

\noindent b. If $\Delta^2$ is $\sA$-irreducible, then $\sBA^1$ is
  a sub-bimonoid species of $\sB^1$ and $\sBA^2$ a quotient bimonoid species
  of $\sB^2$.

\noindent c. If $\Delta^1$ is $\sA$-irreducible, then $\sBA^2$ is
  a sub-bimonoid species of $\sB^2$ and $\sBA^1$ a quotient bimonoid species
  of $\sB^1$.
\end{proposition}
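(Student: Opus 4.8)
The plan is to reduce everything to two facts already established: \textbf{(i)} for \emph{any} restriction comonoid, in particular for both $\Delta^1$ and $\Delta^2$, the inclusion $\sSA \hookrightarrow \sS$ exhibits $\sSA$ as a sub-comonoid; and \textbf{(ii)} when $\Delta^i$ is $\sA$-irreducible, the map $q : \sS[X] \to \sSA[X]$ sending $s$ to $s$ if $s$ is $\sA$-avoiding and to $\nil$ otherwise is a morphism of comonoids for $\Delta^i$, so $\sSA$ is then also a quotient comonoid of $\sS$ for $\Delta^i$. The one observation to add is that, viewed as a multimap, $q$ is exactly the dual $D\iota$ of the inclusion $\iota : \sSA \hookrightarrow \sS$: the matrix of $\iota$ has a single nonzero entry, equal to $1$, in each row, and its transpose is the matrix of $q$. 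Since $D$ is an involutive contravariant functor on $\setmm$ compatible with the monoidal product, it interchanges monoids and comonoids, and each $\mu_i$ is the dual of $\Delta^i$; hence ``$\iota$ is a morphism of monoids for $\mu_i$'' is equivalent to ``$q$ is a morphism of comonoids for $\Delta^i$'', and symmetrically.

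For part a I would verify that the intertwining square \eqref{eq:species-pb}, now formed over $\sSA$, is a partial pullback diagram, and then apply Proposition \ref{pro:setcat-DeMu}. Condition 1 of Definition \ref{def:setcat-pb} holds by the same argument as Corollary \ref{cor:species-pb}: the Lemma preceding that corollary only involves the preorders $\pi_1(s),\pi_2(s)$ and the notion of cut, which are unchanged when one passes from $\sS$ to the subspecies $\sSA$. For condition 2, start from a pair $(y,z)$ of elements of $\sSA[A\sqcup C]\times\sSA[B\sqcup D]$ and $\sSA[A\sqcup B]\times\sSA[C\sqcup D]$ satisfying the hypotheses of the pullback square in Definition \ref{def:setcat-pb}; regarding it inside the $\sS$-square, which is a partial pullback by hypothesis, produces a unique $s \in \sS[X]$ with $\Delta^2_{AC,BD}(s)=(s_{AC},s_{BD})=y$ and $\Delta^1_{AB,CD}(s)=(s_{AB},s_{CD})=z$. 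The one point that requires the hypothesis is that this $s$ lies in $\sSA[X]$: if $\Delta^2$ is $\sA$-irreducible, then $\Delta^2_{AC,BD}(s)=(s_{AC},s_{BD})$ is nonzero with both entries $\sA$-avoiding, so by $\sA$-irreducibility $s$ can have no $\sA$-part; if instead $\Delta^1$ is $\sA$-irreducible, argue identically with $\Delta^1_{AB,CD}(s)=(s_{AB},s_{CD})$. Thus the $\sSA$-square is a partial pullback, and by Proposition \ref{pro:setcat-DeMu} both $(\sSA,\Delta^1,\epsilon^1,\mu_2,\iota_2)=\sBA^1$ and $(\sSA,\mu_1,\Delta^2,\iota_1,\epsilon^2)=\sBA^2$ are bimonoid species.

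Granting part a, parts b and c are bookkeeping with \textbf{(i)} and \textbf{(ii)}. For part b, assume $\Delta^2$ is $\sA$-irreducible. The inclusion $\iota : \sBA^1 \hookrightarrow \sB^1$ is a morphism of the comonoid part $(\Delta^1,\epsilon^1)$ by \textbf{(i)}, and a morphism of the monoid part $(\mu_2,\iota_2)$ because, by the dualization dictionary of the first paragraph, this is precisely the statement that $q = D\iota$ is a morphism of comonoids for $\Delta^2$, which is \textbf{(ii)}; so $\sBA^1$ is a sub-bimonoid species of $\sB^1$ (unit and counit conditions being automatic in the connected setting, where $\iota$ and $q$ are the identity of $\{*\}$ on the empty set). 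Dually, $q : \sB^2 \to \sBA^2$ is a morphism of the comonoid part $(\Delta^2,\epsilon^2)$ by \textbf{(ii)} and of the monoid part $(\mu_1,\iota_1)$ because its dual, the inclusion, is a morphism of comonoids for $\Delta^1$, which is \textbf{(i)}; hence $\sBA^2$ is a quotient bimonoid species of $\sB^2$. Part c is the same statement with $1$ and $2$ interchanged.

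The only step that is not purely formal is showing the pullback element $s$ in part a lies in $\sSA[X]$, which is exactly where $\sA$-irreducibility enters; I expect that to be the crux, although it is in fact a one-line argument once one applies irreducibility to the correct coproduct ($\Delta^2$ in part b's situation, $\Delta^1$ in part c's). The remaining care is in the dualization dictionary relating ``sub-monoid for $D\Delta$'' to ``quotient comonoid for $\Delta$'', but that is immediate from $D$ being an involutive contravariant functor on $\setmm$ compatible with the monoidal structure.
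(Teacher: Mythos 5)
Your proposal is correct and follows essentially the same route as the paper: part a is proved by showing the unique extension $s$ in the intertwining square is forced to be $\sA$-avoiding by applying $\sA$-irreducibility to the nonzero coproduct whose two components lie in $\sSA$, and parts b and c follow by combining the sub-comonoid property of $\sSA$ with the quotient-comonoid property from the preceding lemma and dualizing. Your explicit identification of the quotient map $q$ with $D\iota$ is a nice spelling-out of the dualization step that the paper leaves implicit, but it is the same argument.
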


\begin{proof}
a. Consider the diagram 
 \begin{equation} \label{eq:species-uniq2}
  \xymatrix{ \sS[X] \ar@{-->}[r]^-{\Delta^2}
   \ar@{-->}[d]_{\Delta^1}
   & \raisebox{-3.5mm}{\STvb{\hskip -3mm \sSA\left [
         \begin{matrix} A \\ C \end{matrix} \right ]}
        {\hskip 3.5mm\sSA\left [\begin{matrix} B \\ D \end{matrix} \right ]}
        \ar[d]^-{\Delta^2}_-{\Delta^2}} \\
        \raisebox{-5mm}{\SThb{\sSA[A \, B]}{\sSA[C \, D]}
          \ar[r]^-{\Delta^1}_-{\Delta^1}}
        & \raisebox{-6.5mm}{\ABCDb{\hskip -3mm \sSA[A]}
          {\hskip 4mm \sSA[B]}{\hskip -3mm \sSA[C]}{\hskip 4mm \sSA[D]}}.
        },
        \quad
       \xymatrix{ s
    \ar@{-->}[d] \ar@{-->}[r] &
    \raisebox{-6mm}{\STv{\hskip -2mm s_{AC}}
      {\hskip 2 mm s_{BD}}} \ar[d]^-{\Delta^2}_-{\Delta^2}\\
    \raisebox{-5mm}{\STh{s_{AB}}{s_{CD}}} \ar[r]^-{\Delta^1}_-{\Delta^1}
    & \raisebox{-5mm}{\ABCD{s_A}{s_B}{s_C}{s_D}}. }
\end{equation}
If $\Delta^1$ is $\sA$-irreducible, any extension $s$ (in the right
square) will be $\sA$-avoiding.
Similarly if $\Delta^2$ is $\sA$-irreducible.
So $\Delta^1$ and $\Delta^2$ are intertwined for $\sSA$, and we
get bimonoid species $\sBA^1$ and $\sBA^2$. 

\medskip
\noindent b. If $\Delta^2$ is $\sA$-irreducible, $\sSA$ is a quotient comonoid species
for $\Delta^2$, and so $\sBA^1$ is a submonoid species of $\sB^1$ for
the multiplication $\mu_2$. It
is also a comonoid subspecies for $\Delta^1$, and so a sub-bimonoid species
of $\sB^1$. Dualizing we get $\sBA^2$ as a quotient bimonoid species
of $\sB^2$. Part c. is similar.
\end{proof}

 \medskip \medskip
{\noindent {\large \bf Part II: Constructions of Hopf algebras.}

\section{Hopf algebras from restriction species over
  preorders}
\label{sec:respre}
We show how the following Hopf algebras (HA) come from restriction
species over preorders:
\begin{multicols}{2}
\begin{itemize}
\item Commutative polynomial ring
\item Tensor algebra
\item Schmitt's HA of graphs
\item The HA of posets
\item The Connes-Kreimer HA
\item The HA of symmetric functions
\end{itemize}
\end{multicols}
First we recall the Fock functor.

\subsection{The Fock functor}
Let
\[ \sS : \setx \pil \setmm \]
be a species. For $X = [n] = \{1,2, \ldots, n\}$
write $\sS[n]$ for $\sS[X]$. 
Let $\sS[n]_{S_n}$ be the orbits of $\sS[n]$, the {\it coinvariants},
under the action of
the symmetric groups $S_n$.
The (bosonic) Fock functor of $\sS$ is 

\[ \clK(\sS) = \oplus_{n \in \hele} \kk \sS[n]_{S_n}. \]
In our cases we assume $\sS[\emptyset] = \{*\}$, and $\epsilon$
the natural map, Definition \ref{def:setcat-resco}.
Then if $\sS$ is a bimonoid species
over $\setmm$ it becomes a Hopf monoid species over $\setz$. 
The Fock functor applied to $\sS$ then becomes a Hopf algebra,
\cite[Section 15]{AgMa}.

\medskip
When considering species $\sS$ over vector spaces, one may also
consider the {\it invariants} $\sS[n]^{S_n}$, and this gives
the contragredient Fock functor \cite[Chap.15.1]{AgMa}:
\[ \clK^\vee(\sS) = \oplus_{n \in \hele} \kk \sS[n]^{S_n}. \]
In our situation with a species $\sS$ over $\setmm$,
$\clK(\sS)$ and $\clK^\vee(\sS)$ are isomorphic as
graded vector spaces, but may not be isomorphic
as Hopf algebras.
However, in general these Hopf algebras are isomorphic when the
characteristic of the field $\kk$ is zero.

\medskip
When the species $\sS : \setx \pil \setmm$
gives a bimonoid $\sB = (\sS,\Delta, \mu,
\epsilon, \iota)$, we may dualize all maps and get a dual Hopf
species $\sB^* = (\sS, D\mu, D\Delta, D\iota, D\epsilon)$.
When $H$ is a {\it graded} Hopf algebra with finite dimensional graded pieces,
we also get a graded dual Hopf algebra $H^*$. 
By \cite[Thm.5.13.]{AgMa} we have
\[ \clK^\vee(\sB^*) = \clK(\sB)^*. \]

\subsection{The symmetric algebra} \label{sub:fock-pol}
Let $F$ be a fixed set of variables (or colors) $\{x_1, x_2, \ldots, x_f\}$. 
Let $\sS[X]$ be the set of functions $s : X \pil F$.
Note that the coinvariants $\sS[n]_{S_n}$ correspond to monomials
$x_1^{a_1}x_2^{a_2} \cdots x_f^{a_f}$ of degree $n$. 
For $s \in \sS[X]$
let both the first
and second preorder be the discrete preorder on $X$ (see Subsection
\ref{subsec:preorders}):
\[ \pi_1(s) = \pi_2(s) = D(X). \]
For $Y \sus X$ there are natural restriction maps $\sS[X] \pil \sS[Y]$.
This gives coproducts $\Delta^1 = \Delta^2$, both equal to the natural
restriction maps:
\[ \sS[X] \pil \sS[A] \times \sS[B], \quad  s \mapsto (s_A, s_B) \]
for any decomposition $X = A \sqcup B$.
Consider diagram \eqref{eq:species-pb}. 

\medskip
\noindent {\it Species over preorders:} This is clear.

\medskip
\noindent {\it Extension:}
Given the diagram \eqref{eq:species-uniq1},
it is clear that $s_{AB}, s_{AC}, s_{CD}$ and $s_{BD}$ glue together to give
a unique map $s: X \pil F$.

\medskip
\noindent {\it Cuts:} Clearly
$(A \sqcup B, C \sqcup D)$ and $(A \sqcup C, B \sqcup D)$ are cuts
for $\pi_1(s) = \pi_2(s) = D(X)$.

\medskip
Thus we have two intertwining comonoids of species. Dualizing
we get a bimonoid species $\sB^1 = (\sS,\Delta^1, \mu_2, \epsilon^1, \iota_2)$. 
The Fock functor $\clK(\sB^1)$ becomes the symmetric (polynomial) algebra
$k[x_1, \ldots, x_f]$ with its natural Hopf algebra structure. The
element $s \in \sS[X]$ is sent to the monomial $\prod_{x \in X} s(x)$
in this polynomial ring.

\begin{remark}
  The contragredient Fock functor $\clK^\vee$ sends $\sB^1$
  (or the isomorphic dual bimonoid $\sB^2 = (\sS, \mu_1, \Delta^2,
  \iota_1, \epsilon^2)$ )
  to the divided powers algebra, which is the dual of the
  symmetric Hopf algebra.
\end{remark}

\begin{remark} \label{rem:fock-DC}
  If we had chosen the $\pi_1(s) = D$ and $\pi_2(s) = C$ the
  coarse preorder, this would not give a Hopf species.
  The only possible cuts for $C$ on $X$ are
  $(X, \emptyset)$ and $(\emptyset,X)$.
  Let $X$ and $Y$ be non-empty sets. Consider the diagram:
\begin{equation*} \label{eq:eks-pbxy}
  \xymatrix{ \sS[X \sqcup Y] \ar@{-->}[r]^{\Delta^2}
  \ar@{-->}[d]_{\Delta^1}
& \raisebox{-3.5mm}{\STvb{\sS[X]}
        {\sS[Y]}
          \ar[d]^{\Delta^1}_{\Delta^1}} \\
        \raisebox{-5mm}{\SThb{\sS[X]}{\sS[Y]}
          \ar[r]^{\Delta^2}_{\Delta^2}}
        & \raisebox{-6.5mm}{\ABCDb{\sS[X]}{\sS[\emptyset]}
          {\sS[\emptyset]}{\sS[Y]}}.
      },     
\end{equation*}
Given elements $s_X \in \sS[X]$ and $s_Y \in \sS[Y]$ we can complete
it to a unique element $s \in \sS[X \cup Y]$ restricting to $s_X$ and $s_Y$.
But $(X,Y)$ will not be a cut for $\pi_2(s) = C$.
\end{remark}

\subsection{The tensor algebra}
Let $\sT[X]$ consists of pairs $(s,T)$ where $s : X \pil F$ is a function,
and $T$ is a total order on $X$.
The coinvariants $\sT[n]_{S_n}$ then correspond to words in $F$ of length $n$. 
Let the associated preorders be given by
\[ \pi_1(s,T) = D(X), \quad \pi_2(s,T) = T. \]

For a cut $(U,V)$ for the total order $T$, the restriction coproduct
\[ \Delta^2 : \sT[X] \pil \sT[U] \times \sT[V], \quad \text{sends }
(s,T) \mapsto (s_U, T_{|U}) \times (s_V, T_{|V}).\]

Similarly for any decomposition $X  = A \sqcup B$ we get the restriction
coproduct
\[ \Delta^1 : \sT[X] \pil \sT[A] \times \sT[B]. \]

\medskip
\noindent {\it Species over preorders:}
For $Y \sus X$:
\[ \pi_2(s,T)_{|Y} = T_{|Y} = \pi_2(s_Y,T_{|Y}), \]
and the similar holds obviously for $\pi_1$.

\medskip
\noindent {\it Extension:}
Given the diagram \eqref{eq:species-uniq1}, it
is clear that the maps $s_{AB}, s_{AC}, s_{CD}$ and $s_{BD}$ glue together to give
a unique map $s: X \pil F$. Furthermore define the total order $T$ by
its restrictions to $A \sqcup C$ and $B \sqcup D$ being given by
respectively $T_{AC}$ and $T_{BD}$ and if $x \in A \sqcup C$ and $y
\in B \sqcup D$ then $x <_T y$. This is the unique possible order
if $A \sqcup C, B \sqcup D$ is a cut for $T$.

\medskip
\noindent {\it  Cuts:}
It is clear that 
$(A \sqcup B, C \sqcup D)$ and $(A \sqcup C, B \sqcup D)$ are cuts
for $\pi_1(s,T) = D(X)$ and for $\pi_2(s,T) = T$, respectively.

\medskip
We get a bimonoid species $\sB^1 = (\sT, \Delta^1,\mu_2,\epsilon^1, \iota_2)$
whose associated Hopf algebra by the Fock functor $\clK$ is the tensor
algebra $k\langle x_1, \ldots, x_f \rangle$. The dual bimonoid
species $\sB^2 = (\sT, \mu_1, \Delta^2, \iota_1, \epsilon^2)$ gives
the shuffle Hopf algebra by the Fock functor $\clK^\vee$.


\subsection{Hopf algebra of graphs}
Let $\sG[X]$ be the (simple) graphs with vertex set $X$.
For a graph $G$ we let $\pi_1(G) = D$ the discrete order and
$\pi_2(G)$ the partition order whose bubbles are the connected
components of $G$. So $x \leq_2 y$ if $x$ and $y$
are connected by a path in $G$. This gives $\sG[X]$ two structures
of species over preorders.
We may note that for $Y \sus X$ then
$\pi_2(G_{|Y})$ in general
is a finer partition order than the restriction of $\pi_2(G)_{|Y}$,
since after restricting to a subset $Y$ we may get more components
than before we restricted.
The coproduct
\[\Delta^1 : \sG[X] \pil \sG[U] \times \sG[V], \quad
G \mapsto (G_{|U}, G_{|V}) \]
is simply restriction on each factor for any decomposition
$X = U \scup V$. The coproduct for $\Delta^2$ is
similar whenever $(U,V)$ is a cut for $\pi_2(G)$.
It is straightforward to verify that these coproducts are intertwined.

Applying the Fock functor to the bimonoid species
$\sB^1 = (\sG,\Delta^1, \mu_2, \epsilon^1, \iota_2)$ we get
Schmitt's Hopf algebra of graphs \cite{Sch-Inc}.

\subsection{Hopf algebras of posets or preorders}
Let $\sP[X]$ be the set of all posets (alternatively we may consider
all preorders) on $X$.
For a poset $P$ let $\pi_1(P) = P$ and $\pi_2(P) = P^{\bullet}$, the
partition order whose bubbles are the connected components of $P$.
It gives the restriction coproduct
\[ \Delta^1 : \sP[X] \pil \sP[U] \times \sP[V], \quad P \mapsto
  (P_{|U}, P_{|V}), \]
when $(U,V)$ is a cut for $\pi_1(P) = P$, and similarly for $\Delta^2$. 
The corresponding restriction comonoids are easily checked to be
intertwined. 

Applying the Fock functor to the bimonoid species
$\sB^1 = (\sP, \Delta^1, \mu_2, \epsilon^1, \iota_2)$ we get
Schmitt's Hopf algebra of posets, \cite{Sch-Inc}, or more
generally the Hopf algebra of finite topologies \cite{FaFoMa}.

Note that if we consider any class of posets closed under cuts
and disjoint unions
we get a Hopf sub-algebra of the Hopf algebra of posets.

\begin{example} {\bf Connes-Kreimer.}
The class of posets
avoiding the cherry poset, are the posets whose Hasse diagram is
a forest. Then $\Delta^2$ is irreducible for this class, and
we get the Connes-Kreimer Hopf algebra as a sub-Hopf algebra.
\end{example}

\begin{example}{\bf Symmetric functions.}
  The class of posets avoiding the cherry poset and its opposite,
  the $V$-poset,
  are those whose connected components are total orders.
  Again $\Delta^2$ is irreducible for this class, and 
  we get the Hopf algebra of symmetric functions as a sub-Hopf algebra.
\end{example}

\section{Many Hopf algebras of permutations}
\label{sec:perm}

In this section the Malvenuto-Reutenauer (MR) Hopf algebra \cite{MR}
is the master Hopf algebra. We give a wealth of quotient Hopf algebras
of the MR-algebra, Theorem \ref{thm:perm-A},
the Loday-Ronco Hopf algebras being a special case.
Taking an {\it arbitrary} family of permutations without global
descents the permutations avoiding the patterns of this family form
a quotient Hopf algebra. The Loday-Ronco Hopf algebra is the special
case when the family is the single permutation $2 1 3$. 

We give {\it two distinct ways} to get the MR-Hopf algebra
from intertwining restriction comonoids. The first corresponds
to the original way of defining the MR-Hopf algebra in \cite{MR}, sometimes
called the $F$-basis. 
The second corresponds to the $M$-basis in \cite{AgSo} and
to the plane partition basis in \cite{Fo-PP}. This second basis turns out
to be the effective one for considering sub- and quotient Hopf
algebras of avoidance,
Theorem \ref{thm:perm-A}.

Permutation patterns from restrictions species are also
considered in \cite{Pe}, inspired by \cite{Va}.
The viewpoint there is not avoidance, but grouping permutations together
which possess a certain pattern. 

\subsection{Malvenuto-Reutenauer} \label{sub:eks-MR1}
Let $\sS[X]$ be the set of pairs $s = (T_1, T_2)$ where $T_1$ and $T_2$ are
total orders on $X$. Identifying $T_1$ with $\{ 1 < 2 < \cdots < n\}$, this
corresponds to a permutation of this latter set.

Let $\pi_1(T_1,T_2) = T_1$ and $\pi(T_1, T_2) = T_2$.
The coproduct map is:
\begin{align*} \Delta^1 : \sS[X] & \lpil  \sS[U] \times \sS[V] \\
  (T_1,T_2) & \mapsto \begin{cases}  (T_{1U}, T_{2U}; T_{1V},T_{2V}) &
    (U,V) \text{ is a cut for } T_1 \\
    \nil & \text{ otherwise}
  \end{cases}
\end{align*}
and similarly for $\Delta^2$.

\medskip
\noindent {\it Species over preorders:}
This is clear.

\medskip
\noindent {\it Extension:}
In the diagram \eqref{eq:species-uniq1}, to extend to an element
$s = (T_1, T_2)$ of $\sS[X]$, for $(A \sqcup C, B \sqcup D)$ to be
a cut we must have $x <_{T_2} y$ for $x \in A \sqcup C$ and $y \in B \sqcup D$.
Otherwise the order on $T_2$ is given by those in $T_{2|AC}$ and $T_{2|BD}$.
Similarly we extend to $T_1$ such that $(A \sqcup B, C \sqcup D)$
is a cut for $T_1$.

\medskip
\noindent {\it Cuts:} Clearly
$(A \sqcup B, C \sqcup D)$ and $(A \sqcup C, B \sqcup D)$ are cuts
for $T_1$ and $T_2$ respectively.

\medskip
Let $\sB^1$ be the bimonoid species $(\sS, \Delta^1, \mu_2, \epsilon^1, \iota_2)$.
Applying the Fock functor $\clK(\sB^1)$ we get the Malvenuto-Reutenauer (MR)
Hopf algebra. 

If $(T_1, T_2)$ corresponds to the permutation $\sigma$,
then $(T_1,T_2)$ maps to $F_\sigma$ in the notation of \cite{AgSo}.
The coproduct in the MR-Hopf algebra $\clK(\sB^1)$
is then given by deconcatenation and standardization:
\[ \Delta(F_{3124}) = 1 \te F_{3124}  + F_{1} \te F_{123} + F_{21} \te F_{12}
  + F_{312} \te F_1 + F_{3124} \te 1, \]
and the product is given by shifting the second factor and shuffling
\begin{align*}  F_{12} \cdot F_{312} = & F_{12534} + F_{15234} + F_{15324}
  + F_{15342} + F_{51234} + F_{51324} \\
  = & + F_{51342} + F_{53124} + F_{53142} + F_{53412}
\end{align*}

\subsection{Malvenuto-Reutenauer II} \label{sub:eks-MR2}
Consider again
$\sS[X]$ consisting of pairs $s = (T_1,T_2)$ of total orders on $X$.
In \cite{Fo-PP}, L.Foissy constructs a bijection between permutations in $S_n$
and plane posets with $n$ elements. The corresponding plane
poset has two orders, the vertical order $\leq_v$ which is given
by $T_1 \wedge T_2$ and the horizontal order $\leq_h$ given by
$T_1 \wedge T_2^{\op}$.

These two will not work as projection preorders for $\sS[X]$, but it is
closely related to the following which does work.
Let $\pi_1(s) = T_1 \vee T_2^{\op}$ and $\pi_2(s) = T_1 \wedge T_2$.
Recall from Section \ref{sec:pre} that the former is a
total preorder corresponding to the global descents of the permutation
associated to $(T_1,T_2)$. 
When $(U,V)$ is a cut for $T_1 \vee T_2^{\op}$, the coproduct 
\[ \Delta^1 : \sS[X] \pil \sS[U] \times \sS[V],
\quad (T_1,T_2) \mapsto (T_{1|U}, T_{2|U};T_{1|V}, T_{2|V}). \]
The coproduct $\Delta^2$ is defined in the same way when $(U,V)$ is
a cut for $T_1 \wedge T_2$. 

\medskip
\noindent{\it Species over preorders:}
Lemma \ref{lem:pre-resY} shows this.

\medskip
\noindent{\it Extension:}
Consider the diagram \eqref{eq:species-uniq1}.
We determine both $T_1$ and $T_2$ from the pair $s_{AB}$ and $s_{CD}$.
Since $T_1 \preceq T_1 \vee T_2^{\op}$, the pair $(A \sqcup B, C \sqcup D)$
is a cut for $T_1$ also. Hence for $x \in A \sqcup B$ and $y \in C \sqcup D$
we must have $x <_{T_1} y$, and on $A \sqcup B$ and $C \sqcup D$ we
determine $T_1$ from the restrictions $T_{1|AB}$ and $T_{1|CD}$.
Similarly $T_2$ may be determined from the cut $(A \sqcup B, C \sqcup D)$.

\medskip
\noindent {\it Cuts:}
By construction 
$(A \sqcup B, C \sqcup D)$ is a cut
for $T_1 \vee T_2^{\op}$. Let us argue that
$(A \sqcup C, B \sqcup D)$ is a cut for $T_1 \wedge T_2$.
Let  $x \in A \sqcup C$ and $y \in B \sqcup D$. What could go
wrong is that $y <_{T_1 \wedge T_2} x$. Looking at $T_1$
this cannot happen if both are
in $A \sqcup B$ (then $x \in A$ and $y \in B$) or both in $C \sqcup D$
(then $x \in C$ and $y \in D$).
But since $A \sqcup B, C \sqcup D$ is a cut for $T_1$ and for $T_2^\op$,
if $x \in A$ and $y \in D$, then $x <_{T_1} y$, and if $x \in C$ and
$y \in B$ then $x <_{T_2} y$. Hence we cannot have both
$y <_{T_1} x$ and $y <_{T_2} x$, and so
$(A \sqcup C, B \sqcup D)$ is a cut for $T_1 \wedge T_2$.

\medskip
Now let $\sB^1$ be the bimonoid species $(\sS,\Delta^1, \mu_2, \epsilon^1, \mu_2)$.
Then $\clK(\sB^1)$ is a Hopf algebra, and actually it is again
the Malvenuto-Reutenauer Hopf algebra. This time, when the pair $(T_1,T_2)$
corresponds to the permutation $\sigma $, the pair maps to
the basis element $M_{\sigma}$
in the notation of \cite{AgSo}. The coproduct is then defined
in terms of global descents:
\[ \Delta^1(M_\sigma) = \sum_{} M_{\tau}
  \te M_{\rho}, \]
the sum over all $1 \leq k \leq n-1$ where $\sigma$ has a global
descent at position $k$,
and $\tau$ is the standardization of $\sigma$ restricted
to $\{1,2,\cdots, k\}$ and $\rho$ the standardization of
$\sigma$ to $\{k+1, \cdots, n\}$.

The bases $M_\sigma $ and $F_\sigma $
are related by
\[ F_\sigma = \sum_{\tau \geq \sigma} M_\tau. \]
That this gives an isomorphism between $\clK(\sB^1)$ and the
Malvenuto-Reutenauer algebra constructed in Subsection \ref{sub:eks-MR1}
is Corollary 5.11 in \cite{Fo-PP}.

Let  $\sB^2$ be the bimonoid species $(\sS,\mu_1, \Delta^2, \iota_1,
\epsilon^2)$, the dual of $\sB^1$.
Then $\clK^\vee(\sB^2) $ is the dual Hopf algebra of
$\clK(\sB^1)$. In this case $(T_1,T_2)$ maps to $M_\sigma^*$.
In $\clK^\vee(\sB^2)$ of Subsection \ref{sub:eks-MR1} $(T_1,T_2)$  maps to
$F_\sigma^*$. These bases are now related
by $M_\sigma^* = \sum_{\tau \leq \sigma} F_\tau^*$ as can be checked by
the coproducts $\Delta^2$.

\subsection{The Loday-Ronco Hopf algebra and
  Foissy Hopf algebra   of planar trees}
\label{sub:LR}
Let $\sS[X]$ be the set of pairs $(T_1,T_2)$ of total orders,
which correspond to $213$-avoiding permutations,
i.e. in  $X$ there are no $x <_1 y <_1 z$ such that $y <_2 x <_2 z$.
If we let $\pi_1(T_1, T_2) = T_1$ and $\pi_2(T_1, T_2) = T_2$
be the projection maps as in
Subsection \ref{sub:eks-MR1}, this will not work. The problem is
that even if the pairs in the lower and right of the diagram
\eqref{eq:species-pb} are $213$-avoiding, the extension to $\sS[X]$ may
not be.

Instead we do as in Subsection \ref{sub:eks-MR2} above. Let
\[ \leq_1 = \pi_1(T_1,T_2) = T_1 \vee T_2^\op,  \quad \leq_2 = \pi_2(T_1,T_2) =
  T_1 \wedge T_2. \]
One may check for the diagram \eqref{eq:species-uniq1} that the
extension $(T_1,T_2)$ defined as in Subsection \ref{sub:eks-MR2}
is $213$-avoiding when all elements in the right diagram of
\eqref{eq:species-uniq1}
are $213$-avoiding. This is due to the cut $(A \sqcup B, C \sqcup D)$
being a global descent of the permutation corresponding to $(T_1,T_2)$.

Letting $\sB^1$ be the bimonoid species
$(\sS, \Delta^1, \mu_2, \epsilon^1, \iota_2)$
then $\clK(\sB^1)$ is by \cite{Fo-PP} seen to be Foissy's Hopf
algebra of planar trees, which is again isomorphic to Loday-Ronco's (LR)
Hopf algebra of planar binary trees. Here the correspondence is
that if $(T_1, T_2)$ corresponds to $\sigma$, the pair
maps to the basis element $M_\sigma$ in \cite{AgSo-LR}.
\begin{remark}
In \cite{AgSo-LR} there is also another basis for LR-algebra, the $F$-basis given
by \[ F_\sigma = \sum_{\tau \geq \sigma} M_\tau. \] While
we saw in the above Subsection \ref{sub:eks-MR1}
that the $F$-basis had a combinatorial (species) interpretation for the
Malvenuto-Reutenauer algebra, it seems not to be the case 
that the $F$-basis for the LR-algebra can be interpreted in
terms of cuts in species over preorders.
\end{remark}

\subsection{Hopf algebras of permutations with avoidance}
Let $\sS$ again be the restriction species of pairs $(T_1,T_2)$ of total orders,
and again we make this into two species over preorders by:
\[ \pi_1(s) = T_1 \vee T_2^\op, \quad \pi_2(s) = T_1 \wedge T_2. \]
We saw in Subsection \ref{sub:eks-MR2} that this gives two
intertwined restriction comonoids $\Delta^1$ and $\Delta^2$. 
Let $\sA$ be a subspecies (not necessarily restriction subspecies)
consisting of pairs $(T_1,T_2)$ such that the associated permutation
has {\it no global descent}. This is equivalent to
$T_1 \vee T_2^\op$ being the coarse preorder $C$.
Let $A$ be the set of permutations coming from the elements of $\sA$.
They are permutations without global descents and may be of varying length.

\begin{theorem} \label{thm:perm-A} Let $A$ be any set of
  permutations (of possibly different lengths) with {\it no global descents}.
  The $A$-avoiding permutations form a Hopf algebra which is
  a quotient Hopf algebra of the Malvenuto-Reutenauer Hopf algebra.

  Formulated for species, let $\sS$ be the species of pairs $(T_1,T_2)$
  of total orders and $\sB^1$ the associated bimonoid species from
  Subsection \ref{sub:eks-MR2}. Let $\sA$ the sub-species 
  corresponding to the permutations in $A$. 
  The coproduct $\Delta^1$ for $\sS[X]$ is $\sA$-irreducible.
  Hence the $\sA$-avoiding subspecies $\sSA$ of $\sS$ gives
  a quotient bimonoid species $\sBA^1$ of $\sB^1$.
\end{theorem}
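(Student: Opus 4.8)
The plan is to reduce the whole statement to one combinatorial assertion: that $\Delta^1$ is $\sA$-irreducible. Granting that, Proposition~\ref{pro:avoid-subquot}(a) shows $\Delta^1$ and $\Delta^2$ restrict to intertwined coproducts on $\sSA$, so we obtain bimonoid species $\sBA^1$ and $\sBA^2$, and part~(c) of that proposition shows $\sBA^1$ is a quotient bimonoid species of $\sB^1$. Applying the Fock functor $\clK$ to the surjection $\sB^1 \pils \sBA^1$ produces a surjection of Hopf algebras $\clK(\sB^1) \pils \clK(\sBA^1)$; since $\clK(\sB^1)$ is the Malvenuto--Reutenauer Hopf algebra by Subsection~\ref{sub:eks-MR2}, $\clK(\sBA^1)$ is a quotient Hopf algebra of it. Finally, the basis of $\clK(\sBA^1)$ is $\sSA[n]_{S_n}$, and since each $S_n$-orbit of a pair of total orders on $[n]$ has a unique representative with $T_1$ the standard order, this basis is exactly the set of $A$-avoiding permutations; this gives the first assertion of the theorem.

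To prove $\sA$-irreducibility of $\Delta^1$, I would start from $s = (T_1,T_2) \in \sS[X]$ having an $\sA$-part, together with a decomposition $X = U \scup V$ for which $\Delta^1(s) = (s_U, s_V) \neq \nil$, i.e. $(U,V)$ is a cut for $T := T_1 \vee T_2^{\op}$. If $U$ or $V$ is empty there is nothing to prove, so assume both are nonempty. By the description of $T_1 \vee T_2^{\op}$ recalled in Section~\ref{sec:pre}, $T$ is a total preorder whose bubbles are the segments of $\sigma$ between consecutive global descents, with earlier segments strictly below later ones; since $U$ is a down-set of $T$ and $V$ its complement, $U$ is a union of initial bubbles. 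Identifying $X$ with $[n]$ via $T_1$, this means $U = \{1, \dots, k\}$ and $V = \{k+1, \dots, n\}$ with $0 < k < n$, and that $k$ is a global-descent position of $\sigma$, so $\sigma_i > \sigma_j$ whenever $i \le k < j$.

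Next I would pick an injection $Y \inpil X$ witnessing the $\sA$-part, so that $s_Y = (T_{1|Y}, T_{2|Y}) \in \sA[Y]$; thus the standardization $\tau$ of the subword of $\sigma$ on the $T_1$-positions of $Y$ has no global descents. Suppose for contradiction that $Y$ meets both $U$ and $V$, and set $Y_U = Y \cap U$, $Y_V = Y \cap V$, both nonempty, with $p := |Y_U| \ge 1$. Because $\sigma$ has a global descent at $k$, every value of $\sigma$ at a position of $Y_U$ exceeds every value at a position of $Y_V$, and the $Y_U$-positions all precede the $Y_V$-positions; standardization preserves both of these relations, so $\tau$ has a global descent at position $p$, with $1 \le p \le |Y| - 1$ --- contradicting $\tau \in A$. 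Hence $Y \subseteq U$ or $Y \subseteq V$, and by functoriality of the restriction species (restrict $s$ to $U$ then to $Y$, versus restrict $s$ to $Y$ directly) the corresponding factor $s_U$ or $s_V$ has an $\sA$-part. This is $\sA$-irreducibility of $\Delta^1$.

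I expect the only delicate point to be the bookkeeping in the last two paragraphs: reading off from ``$(U,V)$ is a cut for $T_1 \vee T_2^{\op}$'' that $|U|$ is a global-descent position of $\sigma$ (via the bubble description of Section~\ref{sec:pre}), and then checking that this global descent is inherited under restriction followed by standardization. There is no analytic content here; it is purely a matter of keeping the two total orders and the standardization convention consistent. The degenerate case $U = \emptyset$ or $V = \emptyset$ is trivial and handled separately, as above.
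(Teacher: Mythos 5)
Your proposal is correct and follows essentially the same route as the paper: the whole theorem is reduced to the $\sA$-irreducibility of $\Delta^1$, which is established by observing that a cut for $T_1\vee T_2^{\op}$ is a global-descent cut of $\sigma$, and that a pattern with no global descents cannot straddle such a cut (your standardization argument is just a more explicit write-up of the paper's one-line assertion to this effect). The surrounding reductions via Proposition~\ref{pro:avoid-subquot} and the Fock functor match the paper's treatment as well.
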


  Applying the Fock functor we get the above statement for algebras:
  $\clK(\sBA^1)$ is a  Hopf quotient algebra of the
   Malvenuto-Reutenauer algebra
  $\clK(\sB^1)$ consisting of $A$-avoiding permutations.

\begin{proof}
  Let $\sigma = \sigma_1 \sigma_2 \ldots \sigma_n$
  be the permutation corresponding to $(T_1,T_2)$.
  The total preorder $T_1 \vee T_2^\op$ has cuts corresponding precisely
  to the global descents of $\sigma$. Let  $(U,V)$ a cut for
  $T_1 \vee T_2^\op$ corresponding to a global descent of $\sigma$ at position
  $k$. Then the restrictions $\sigma_U$ is the standardization of
  $\sigma_1 \ldots \sigma_k$ and
  $\sigma_V$ the standardization of $\sigma_{k+1} \ldots \sigma_n$.
  If $\tau$ is a permutation with {\it no global descent}
  and $\sigma$ has the pattern $\tau$, then we cannot have part
  of the $\tau$-pattern in $\sigma_U$ and another part in $\sigma_V$.
  So the full $\tau$-pattern is either in $\sigma_U$ or in $\sigma_V$.
  Hence if $\sigma$ has a $\tau$-part, then either $\sigma_U$
  has a $\tau$-part, or $\sigma_V$ has a $\tau$-part.
  That is, $\Delta^1$ is $A$-irreducible.
\end{proof}

\begin{example}
  We saw in Subsection \ref{sub:LR} that the Loday-Ronco
  Hopf algebra is the quotient algebra of the MR-algebra
  consisting of $213$-avoiding permutations.
  \end{example}

\begin{example}
  If $A = \{ 213, 132\}$, the permutations are of the type
  \[ 6789\, 45\, 123 \]
  where between each global descent, we have an increasing sequence
  with increments of size $1$.
  This can be identified as the composition $(4,2,3)$.
  In this case we get the Hopf algebra of quasi-symmetric functions.
\end{example}

\begin{example}
  Let $A = \{ 12 \}$. Then there is only a single permutation of each
  length, permutations like $54321$.
  We get the divided powers
  Hopf algebra of the polynomial ring in one variable $k[x]$.
  If we apply the Fock functor to the dual bimonoid species
  $\sB^2 = (\sS, \mu^1, \Delta^2, \iota^1, \epsilon^2)$ we get the
  ordinary  polynomial  Hopf algebra on $k[x]$. Alternatively we get the latter
  polynomial Hopf algebra by
  the contragredient Fock functor $\clK^\vee(\sB^1)$. 
\end{example}

\begin{example}
  Let $A = \{3142, 2413 \}$. This gives the Hopf algebra
  ${\mathcal W}{\mathcal NP}$ of \cite[Def.4]{Fo-PP} of plane posets avoiding
  subposets of types
  
\begin{center}
\begin{tikzpicture}[inner sep=0pt, scale=0.4]
\draw [help lines, white] (-2,-2) grid (4,2);

\node (aa) at (-1,-1) {$\bullet$};
\node (cc) at (-1,1) {$\bullet$};
\node (bb) at (1,-1) {$\bullet$};
\node (dd) at (1,1) {$\bullet$};

\draw (aa)--(cc); 
\draw (cc)--(bb);
\draw (bb)--(dd);

\node (aa) at (3,-1) {$\bullet$};
\node (cc) at (3,1) {$\bullet$};
\node (bb) at (5,-1) {$\bullet$};
\node (dd) at (5,1) {$\bullet$};

\draw (aa)--(cc); 
\draw (aa)--(dd);
\draw (bb)--(dd);
\end{tikzpicture}
\end{center}
\end{example}

\begin{problem}
  {\it  The Malvenuto-Reutenauer and Loday-Ronco Hopf algebras
  are self-dual, while the Hopf algebra of quasi-symmetric
  functions is not. Which sets $A$ of permutations without global
  descents give a quotient Hopf algebra which is self-dual?}
\end{problem}

\begin{remark} A packed word is a sequence $a_1a_2 \ldots a_n$ of
  positive integers such that the set $\{ a_1, a_2, \ldots,  a_n \}$ equals
  $\{1,2, \ldots, k\}$ for some $k$. It can be identified with
  a surjection $f : [n] \pil [k]$. It can also be identified (up to
  isomorphism) with a pair $(T,S)$ where $T$ is a total order on a set
  $X$ and $S$ is a total preorder on $X$.

  The Hopf algebra of packed words {\bf WQSymm} 
  \cite{ThNoW}, is recently considered in \cite{BV}. The
  $212$-avoiding packed words give the generalized Stirling permutations of
  \cite{GS78}, and give a Hopf algebra studied
  in \cite{BV}. As above, it is a quotient Hopf algebra
  of {\bf WQSymm}, the Hopf algebra of packed words.
  Furthermore $212$ and $213$-avoiding
  packed words may be identified with planar trees giving a Hopf algebra
  {\bf PTrees} in \cite{BV}, a quotient of the Hopf algebra of
  generalized Stirling permutations.
\end{remark}

\section{Hopf algebras of parking filtrations}
\label{sec:parking}
J-C.Novelli
and J-Y. Thibon \cite{ThNo1, ThNo2} introduced  the Hopf algebra
of parking functions {\bf PQSymm}. It has the Malvenuto-Reutenauer
Hopf algebra {\bf FQSymm} as a quotient Hopf algebra. We give a new
master Hopf algebra, sitting above both these. Its elements
are pairs $(\{X_i\}, \{Y_j\})$ of {\it parking filtrations} of $X$.
Parking functions is the Hopf sub-algebra where the filtration $\{Y_j\}$ is
equivalent to a total order on $X$.

\subsection{Parking functions and filtrations}
Let $[n] = \{1,2,\ldots, n\}$.
A {\it non-decreasing parking function} is an order preserving map
$a : [n] \pil [n]$ such that $a(i) \leq i$. It may be identified
as non-decreasing sequences $a_1, a_2, \ldots, a_n$ such that
$a_i \leq i$. A {\it parking function} is a sequence
$a_1, \dots, a_n$ such that if ordered in non-decreasing order, it
becomes a parking function as above. So if an integer $q$ occurs, there
are at least $q-1$ of the $a_i's$ which are $< q$.

\begin{definition}
  For a finite set $X$ with cardinality $n$,
a {\it parking function} is a function
$p : X \pil \NN$ such that for $i \leq n$,
then $p^{-1}([i])$ has cardinality $\geq i$. Equivalently
(let $X_i = p^{-1}([i])$), if a filtration of $X$:
    \[ \emptyset = X_0 \sus X_1 \sus \cdots \sus X_p \sus \cdots  \]
    has $|X_p| \geq p$ for $0 \leq p \leq n$, we call this a {\it parking
      filtration}.
\end{definition}


In \cite{ThNo1, ThNo2}, J-Y. Thibon and J-C. Novelli introduced
Hopf algebras of parking functions. To define such a Hopf algebra,
they needed to be able to ``standardize'' any sequence to a parking
function. For our purpose we need to standardize
any {\it exhaustive} filtration of $X$ (meaning that $X_p = X$ for
large $p$)
\[\emptyset = X_0 \sus X_1 \sus \cdots \sus X_p \sus \cdots \]
to a parking filtration. We assume in the following that our filtrations
are always exhaustive.

\begin{definition}
  Let $\{X_t\}$ be a filtration of $X$ with $n = |X|$.
  Define an order-preserving function $p : [n] \pil \NN$ by induction as
  follows. First, as convention, set $p(0) = 0$. If $p(t-1)$ is defined for $1 \leq t \leq n$,
  let $p(t)$ be the minimal $p > p(t-1)$ such that $|X_p| \geq t$.

  We thus get a strictly increasing sequence
  $p(0) < p(1) < \cdots < p(n)$, the {\it dilation} sequence
  for $\{X_t\}$. It measures how large the index $p$ must be so
  that $|X_p| \geq t$, while also keeping the sequence $\{p(t)\}$
  strictly increasing.

  Note that $\{X_t\}$ is a parking filtration iff
  $p(t) = t$ for each $t$. Let $\{\oX_t\}$ be the filtration given by
  $\oX_t = X_{p(t)}$. Then $\{\oX_t\}$ is a parking filtration, the
  {\it parkization} of $\{X_t\}$ and we also write $\{X_t\}^- =
  \{\oX_t\}$. Two filtrations are {\it equivalent} if they have the same
  parkizations.
\end{definition}

\begin{definition} \label{def:park-order}
  For a filtration $\{ X_t\}$, the set of $b$ in $\{0\} \cup [n]$ such
  that the cardinality $|X_{p(b)}| = b$, are the {\it break points} of
  $\{X_t\}$. Let $B$ be the set of break points. Then $B \supseteq \{0,n\}$.
  There is a total preorder on $X$ given by $y \geq x$ if for every $b \in B$,
  $y \in X_b$ implies $x \in X_b$. 
  If we write: 
  \[ B : 0 = b_0 < b_1 < \cdots < b_\ell = n, \]
  this is the total preorder on $X$ whose bubbles (see Definition
  \ref{def:pre-c}) are the $X_{p(b_r)} \backslash X_{p(b_{r-1})}$ with these
  successively larger for the preorder as $r$ increases. 
\end{definition}

\begin{remark} \label{rem:parking-Hp}
  Total orders on $X$ correspond to those parking filtrations where
 all points are break points,  $B = \{0 \} \cup [n]$.
  
  A parking function as defined in the beginning of this subsection,
  corresponds to a pair $(\{X_i\}, \{Y_j\})$ of parking filtrations
  of a set $X$, such that $\{Y_j\}$ is a parking filtration where
  all points are break points. So $\{Y_j\}$  gives a total order on
  $Y_n = X = X_n$ and this may be identified with $[n]$. Then
  $X_t \backslash X_{t-1}$ are
  those elements $i$ in $X = [n]$ such that $a_i = t$.
\end{remark}

Our goal now is to show:
\begin{proposition} \label{pro:parking-UX}
  Let $\{X_t \}$ and $\{Y_t \}$ be equivalent filtrations of $X$
  and $U \sus X$. Then $\{U \cap X_t\}$ and
    $\{ U \cap Y_t \}$ are equivalent filtrations
    of $U$.
  \end{proposition}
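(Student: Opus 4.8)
\emph{Plan.} Write $\{X_t\}^{-}$ for the parkization of a filtration, and recall that $\{X_t\}\sim\{Y_t\}$ means $\{X_t\}^{-}=\{Y_t\}^{-}$. The whole statement follows once we prove the single identity
\[
\{\,U\cap X_t\,\}^{-}\;=\;\{\,U\cap \overline X_t\,\}^{-},
\]
i.e. that the parkization of the restricted filtration depends only on $\{X_t\}^{-}=\{\overline X_t\}$: granting this and $\{X_t\}^{-}=\{Y_t\}^{-}$ we get $\{U\cap X_t\}^{-}=\{U\cap\overline X_t\}^{-}=\{U\cap\overline Y_t\}^{-}=\{U\cap Y_t\}^{-}$. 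So the task is to understand parkization well enough to compare these two filtrations of $U$ (extended constantly past their stabilization points).

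\emph{An explicit form of parkization.} List the distinct sets occurring in $\{X_t\}$, with $|X|=n$, as $\emptyset=S_0\subsetneq S_1\subsetneq\cdots\subsetneq S_k=X$, where $S_j$ first occurs at time $v_j$ (so $0=v_0<v_1<\cdots<v_k$), and put $r_j=v_{j+1}-v_j$ for $j<k$. With $p$ the dilation sequence and $\tau(t)=\min\{s:|X_s|\ge t\}$, one has $p(t)=\max(p(t-1)+1,\tau(t))$, and $\tau(t)=v_j$ for $t\in\{|S_{j-1}|+1,\dots,|S_j|\}$; from this one checks that $p$ meets every $S_j$ (it never ``jumps over'' a distinct set), that $p(w_j)=v_j$ at the first index $w_j$ with $\overline X_{w_j}=S_j$, and that the run of $S_j$ ends precisely when the running index reaches $|S_j|+1$ or $p$ reaches $v_{j+1}$. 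Hence $\{\overline X_t\}$ is the parking filtration in which $S_j$ occupies exactly the indices $[w_j,w_{j+1})$, with $w_0=0$ and $w_{j+1}=\min(w_j+r_j,\,|S_j|+1)$ for $j<k$; unrolling, $w_j=\min_{0\le q<j}\bigl(|S_q|+1+v_j-v_{q+1}\bigr)$. In particular $\{\overline X_t\}$ has the same distinct sets as $\{X_t\}$.

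\emph{Comparison of the two restricted filtrations.} Consequently $\{U\cap X_t\}$ and $\{U\cap\overline X_t\}$ share the same distinct sets $\emptyset=\hat T_0\subsetneq\cdots\subsetneq\hat T_L=U$, with $\hat T_i=U\cap S_{j_i}$ where $j_0=0<j_1<\cdots<j_L$ are the indices at which $j\mapsto U\cap S_j$ jumps. Their filtration multiplicities are $\hat\rho_i=v_{j_{i+1}}-v_{j_i}$ and $\tilde\rho_i=w_{j_{i+1}}-w_{j_i}$, and $w_{j+1}\le w_j+r_j$ gives $\tilde\rho_i\le\hat\rho_i$. Writing $\hat\omega_i,\tilde\omega_i$ for the resulting run-starts (computed by the same recursion, with the unrolled form $\hat\omega_i=\min_{0\le\ell<i}(|\hat T_\ell|+1+v_{j_i}-v_{j_{\ell+1}})$), it suffices to prove $\hat\omega_i=\tilde\omega_i$ for all $i$, by induction on $i$ (the cases $i=0,1$ being immediate). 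For the step, assuming $\hat\omega_i=\tilde\omega_i$, we need $\min(\hat\omega_i+\hat\rho_i,|\hat T_i|+1)=\min(\hat\omega_i+\tilde\rho_i,|\hat T_i|+1)$; since $\tilde\rho_i\le\hat\rho_i$, this reduces to: if $\tilde\rho_i<\hat\rho_i$ then $\hat\omega_i+\tilde\rho_i\ge|\hat T_i|+1$.

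\emph{The key inequality (the main obstacle).} If $\tilde\rho_i<\hat\rho_i$ then some $j^{*}\in[j_i,j_{i+1})$ has $w_{j^{*}+1}=|S_{j^{*}}|+1$, and $\hat T_i=U\cap S_{j^{*}}$. The crucial auxiliary estimate is $w_{j_i}-\hat\omega_i\le|S_{j_i}\setminus U|$, obtained by matching the two unrolled $\min$-formulas term by term: the term of $\hat\omega_i$ indexed by $\ell$ equals the term of $w_{j_i}$ indexed by $q=j_{\ell+1}-1$ minus $|S_{j_{\ell+1}-1}\setminus U|$, and $S_{j_{\ell+1}-1}\subseteq S_{j_i}$. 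Combining this with $\tilde\rho_i=w_{j_{i+1}}-w_{j_i}\ge w_{j^{*}+1}-w_{j_i}=|S_{j^{*}}|+1-w_{j_i}$ and $|S_{j^{*}}|=|\hat T_i|+|S_{j^{*}}\setminus U|$ yields
\[
\hat\omega_i+\tilde\rho_i\ \ge\ (|\hat T_i|+1)+|S_{j^{*}}\setminus U|-(w_{j_i}-\hat\omega_i)\ \ge\ |\hat T_i|+1,
\]
using $S_{j_i}\subseteq S_{j^{*}}$. This closes the induction, hence the identity of the first paragraph and the Proposition. I expect the delicate point to be exactly this: making the explicit parkization formula precise enough to run the capacity bookkeeping, especially the estimate $w_{j_i}-\hat\omega_i\le|S_{j_i}\setminus U|$. (One could instead argue directly with the dilation sequences of the three filtrations, but their interaction with intersection by $U$ appears to require the same bookkeeping.)
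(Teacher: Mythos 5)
Your argument is correct, but it takes a genuinely different route from the paper's. The paper reduces to deleting a single element: Lemma \ref{lem:parking-p} computes how the dilation function changes when one point $x$ is removed (it is unchanged below the relevant break point and shifts by one from there on), Lemma \ref{lem:parking-xX} deduces that equivalence of filtrations is preserved under removal of one point, and the proposition then follows by deleting the points of $X\setminus U$ one at a time. You instead reformulate the statement as the single identity $\{U\cap X_t\}^{-}=\{U\cap \overline{X}_t\}^{-}$, derive a closed-form description of parkization (the run-start indices obey $w_{j+1}=\min(w_j+r_j,\,|S_j|+1)$, with the unrolled min-formula $w_j=\min_{0\le q<j}(|S_q|+1+v_j-v_{q+1})$), and compare the two restricted filtrations of $U$ globally. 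The entire weight of your proof rests on the capacity estimate $w_{j_i}-\hat\omega_i\le|S_{j_i}\setminus U|$, obtained by matching the term of $\hat\omega_i$ indexed by $\ell$ with the term of $w_{j_i}$ indexed by $q=j_{\ell+1}-1$; I have checked this matching and the ensuing chain of inequalities, and they close the induction correctly. The trade-off: the paper's one-point reduction keeps each step local (only one break point is disturbed), so the bookkeeping stays light at the cost of an extra induction; your explicit formula for the parkization handles an arbitrary restriction in one pass and makes the break-point structure of $\{\overline{X}_t\}$ completely explicit, which is of some independent interest, at the cost of heavier indexing and the delicate estimate above.
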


  We do this by successively removing elements:
  
    \begin{proposition} \label{pro:parking-xX} {} \hskip 2mm
    Let $\{X_t \}$ and $\{Y_t \}$ be equivalent filtrations of $X$.
    Let $x \in X$. Then $\{X_t \backslash \{x\} \}$ and
    $\{ Y_t \backslash \{x\} \}$ are equivalent filtrations
    of $X \backslash \{x \}$.
  \end{proposition}

  We first show a lemma.
  
\begin{lemma}
\label{lem:parking-p}
  Let $\{X_t\}$ be a filtration of $X$ with dilation
  function $p^o$. Let $b ^\prime < b$ be successive break points,
  and $x \in X_{p^o(b)} \backslash X_{p^o(b^\prime)}$. The dilation
  function $p^n$ of the filtration $\{X_t \backslash \{x\} \}$
  of $X\backslash \{x\}$ is:
  \[ p^n(t) = \begin{cases} p^o(t), & t < b \\
      p^o(t+1), & t \geq b
    \end{cases}. \]
  \end{lemma}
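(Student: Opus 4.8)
The plan is to exhibit an explicit candidate for the dilation function $p^n$ of $\{X_t\setminus\{x\}\}$ and verify that it obeys the recursion defining such a function, since that recursion determines it uniquely. Write $Y = X\setminus\{x\}$ and $Y_p = X_p\setminus\{x\}$, so that $|Y_p| = |X_p|$ when $x\notin X_p$ and $|Y_p| = |X_p|-1$ when $x\in X_p$. Three facts will be used throughout. First, since $x\in X_{p^o(b)}\setminus X_{p^o(b')}$, we have $x\notin X_p$ whenever $p\le p^o(b')$ and $x\in X_p$ whenever $p\ge p^o(b)$. Second, because no break point lies strictly between the successive break points $b'$ and $b$, any integer $s$ with $b'<s<b$ is not a break point, so $|X_{p^o(s)}|>s$, i.e. $|X_{p^o(s)}|\ge s+1$, whereas $|X_{p^o(b')}|=b'$ and $|X_{p^o(b)}|=b$. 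Third, the defining inequalities of $p^o$: $|X_p|\le t-1$ for $p^o(t-1)<p<p^o(t)$, and $|X_{p^o(t)}|\ge t$.

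The candidate is the function $q$ on $\{0,\dots,n-1\}$ given by
\[ q(t) = \begin{cases} p^o(t), & 0\le t<b,\\ p^o(t+1), & b\le t\le n-1,\end{cases} \]
which has $q(0)=p^o(0)=0$. I would prove $q=p^n$ by induction on $t$, splitting the inductive step into the cases $t<b$, $t=b$, $t>b$. For $t<b$: for $p^o(t-1)<p<p^o(t)$ one has $|Y_p|\le|X_p|\le t-1<t$, so the relevant minimum is attained no earlier than $p^o(t)$; and $|Y_{p^o(t)}|\ge t$, either because $x\notin X_{p^o(t)}$, giving $|Y_{p^o(t)}|=|X_{p^o(t)}|\ge t$, or because $x\in X_{p^o(t)}$, which forces $p^o(t)>p^o(b')$, hence $b'<t<b$, hence $t$ is not a break point and $|X_{p^o(t)}|\ge t+1$, so again $|Y_{p^o(t)}|\ge t$. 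Thus $p^n(t)=p^o(t)=q(t)$.

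For $t=b$ (if $b=n$ the second clause is vacuous and there is nothing to prove, so assume $b\le n-1$), using $p^n(b-1)=q(b-1)=p^o(b-1)$ from the previous steps: for $p^o(b-1)<p<p^o(b)$ we have $|Y_p|\le|X_p|\le b-1$; at $p=p^o(b)$ we have $|X_{p^o(b)}|=b$ but $x\in X_{p^o(b)}$, so $|Y_{p^o(b)}|=b-1<b$; for $p^o(b)<p<p^o(b+1)$ we have $|X_p|\le b$ and $x\in X_p$, so $|Y_p|\le b-1<b$; and at $p^o(b+1)$ we have $|X_{p^o(b+1)}|\ge b+1$ and $x\in X_{p^o(b+1)}$, so $|Y_{p^o(b+1)}|\ge b$. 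Hence $p^n(b)=p^o(b+1)=q(b)$. The case $t>b$ is the same computation but simpler: every $p\ge p^o(t)>p^o(b)$ contains $x$, so $|Y_p|=|X_p|-1$ identically, and then $|X_p|\le t$ for $p^o(t)<p<p^o(t+1)$ together with $|X_{p^o(t+1)}|\ge t+1$ yields $p^n(t)=p^o(t+1)=q(t)$ at once.

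The step I expect to require the most care is the single case where deleting $x$ could a priori spoil an inequality, namely $t<b$ with $x\in X_{p^o(t)}$: the resolution is precisely that such a $t$ must be a non-break point strictly between $b'$ and $b$, so $|X_{p^o(t)}|$ was already at least $t+1$ and can absorb the loss of one element. Everything else is routine bookkeeping with the inequalities recalled above, together with the uniqueness of the function satisfying the dilation recursion.
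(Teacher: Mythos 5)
Your proposal is correct and follows essentially the same route as the paper: an induction on $t$ through the cases $t<b$, $t=b$, $t>b$, verifying the defining recursion of the dilation function, with the same key observation that when $x\in X_{p^o(t)}$ and $t<b$ one is forced into $b'<t<b$, so $t$ is not a break point and $|X_{p^o(t)}|\ge t+1$ absorbs the deleted element. Your write-up is in fact slightly more explicit than the paper's about the inequalities $|Y_p|<t$ on the intermediate range $p^o(t-1)<p<p^o(t)$, but the argument is the same.
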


    \begin{proof}
      Clearly for $t \leq b^\prime$ we have $p^n(t) = p^o(t)$.
      Let $b^\prime < t < b$ so this $t$ is not a break point for
      $\{X_t\}$. Then
      \begin{itemize}
      \item $p^n(t)$ is the smallest $p > p^n(t-1) = p^o(t-1)$ (these are
        equal by induction) such that
      $X_p \backslash \{x\}$ has cardinality $\geq t$. 
\item  $p^o(t)$ is the smallest $p > p^o(t-1)$ such that
      $X_p$ has cardinality $\geq t$. 
    \end{itemize}
    Since $t$ not a break point, $X_{p^o(t)}$ has cardinality $\geq t+1$.
    Hence  $X_{p^o(t)} \backslash
    \{x \}$ has cardinality $\geq t$, and so $p^n(t) = p^o(t)$.
    
Consider $t = b$. Then 
\begin{itemize}
\item $p^n(b)$ is the smallest $p > p^n(b-1) = p^o(b-1)$ such that
      $X_p \backslash \{x\}$ has cardinality $\geq b$. 
\item  $p^o(b)$ is the smallest $p > p^o(b-1)$ such that
      $X_p$ has cardinality $\geq b$. 
    \end{itemize}
    Since $b$ is a break point $|X_{p^o(b)}| = b$ and so
    $|X_{p^o(b)} \backslash \{x\}| = b-1$. Thus we must have
    $p^n(b) > p^o(b)$. Now the smallest $p > p^o(b)$ such that
    $X_p \backslash \{x\}$ has cardinality $\geq b$ is the smallest
    $p > p^o(b)$ such that $|X_p| \geq b+1$. So $p^n(b) = p^o(b+1)$.

    Not let $t > b$. Then $p^n(t)$ is the smallest $p > p^n(t-1) = p^o(t)$ such
    that $X_p \backslash \{x\}$ has cardinality $\geq t$.
    This is again the smallest $p > p^o(t)$ such that $|X_p| \geq t+1$,
    and so $p^n(t) = p^o(t+1)$.
  \end{proof}

  \begin{proof}[Proof of Proposition \ref{pro:parking-xX}]
  Let $\{X_t \}$ have dilation function $p^o$ and $\{Y_t\}$ dilation function
  $q^o$. For $0 \leq t \leq n$ we have $X_{p^o(t)} = Y_{q^o(t)}$.
  The break points of these are the same.

  Let $\{X^\prime_t\}$ and $\{Y^\prime_t\}$ be the parkizations of 
$\{X_t \backslash \{x\} \}$ and
  $\{ Y_t \backslash \{x\} \}$. Let $x$ be in in
  $X_{p^o(b)} \backslash X_{p^o(b^\prime)} = Y_{q^o(b)} \backslash Y_{q^o(b^\prime)}$. 
For $t < b$ we have:
\[ X^\prime_t =  (X \backslash \{x\})_{p^n(t)} =
  X_{p^o(t)} \backslash \{x \} = Y_{q^o(t)} \backslash \{x\} 
  = (Y \backslash \{x\})_{q^n(t)} = Y^\prime_t. \]
The case when $t \geq b$ is similar.  
\end{proof}

We shall also need the following.

  \begin{lemma} \label{lem:parking-finer}
    The total preorder on $X\backslash \{x\}$ induced by $\{X_t \backslash
    \{x\} \}$ is finer than the restriction to $X \backslash \{x\}$
    of the total preorder on $X$ induced by $\{X_t \}$.
  \end{lemma}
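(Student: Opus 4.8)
The plan is to prove the statement in its contrapositive form. Write $P$ for the total preorder on $X$ induced by $\{X_t\}$ and $P'$ for the total preorder on $X\setminus\{x\}$ induced by $\{X_t\setminus\{x\}\}$; since $P'$ and $P_{|X\setminus\{x\}}$ are both total preorders, ``$P'$ is finer'' (i.e.\ $P'\preceq P_{|X\setminus\{x\}}$) is equivalent to: for all $a,b\in X\setminus\{x\}$, if $b<_{P}a$ then $b<_{P'}a$. I will describe $P$ through its defining chain of subsets: with break points $0=b_0<b_1<\cdots<b_\ell=n$ and $S_r:=X_{p^o(b_r)}$, one has $|S_r|=b_r$, the $S_r$ increase strictly from $\emptyset$ to $X$, and $y\le_{P}z$ iff $z\in S_r\Rightarrow y\in S_r$ for every $r$; in particular $b<_{P}a$ holds iff some $S_r$ contains $b$ but not $a$. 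The point of departure is that for $w\ne x$ the entry index $\min\{j:w\in X_j\}$ is the same for $\{X_t\}$ and $\{X_t\setminus\{x\}\}$, so it is enough, whenever some $S_r$ separates $b$ from $a$, to exhibit a subset in the defining chain of $P'$ that still contains $b$ and still omits $a$.

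First I record where $x$ lies: say $x\in S_{r_0}\setminus S_{r_0-1}$, so Lemma~\ref{lem:parking-p} applies with $b'=b_{r_0-1}$, $b=b_{r_0}$ and gives, for the new dilation function $p^n$, that $p^n(t)=p^o(t)$ for $t<b_{r_0}$ and $p^n(t)=p^o(t+1)$ for $t\ge b_{r_0}$. I then read off break points of $\{X_t\setminus\{x\}\}$: for $r<r_0$ we have $x\notin S_r$, so $|S_r\setminus\{x\}|=b_r$ and $p^n(b_r)=p^o(b_r)$, making $b_r$ a new break point with associated subset $S_r$; for $r>r_0$ we have $x\in S_r$, so $|S_r\setminus\{x\}|=b_r-1$ and $p^n(b_r-1)=p^o(b_r)$, making $b_r-1$ a new break point with associated subset $S_r\setminus\{x\}$. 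Consequently, if $b<_{P}a$ is witnessed by some $S_r$ with $r\ne r_0$, then the corresponding subset $S_r$ (resp.\ $S_r\setminus\{x\}$) in the chain for $P'$ still contains $b$ (as $b\ne x$) and still omits $a$, so $b<_{P'}a$.

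The remaining case — which I expect to be the main obstacle — is that the \emph{only} $S_r$ separating $b$ from $a$ is $S_{r_0}$ itself; this forces $b$ into the same $P$-bubble $S_{r_0}\setminus S_{r_0-1}$ as $x$, so $b_{r_0}-b_{r_0-1}\ge 2$, and $a\notin S_{r_0}$. Here I will show that $b_{r_0}-1$ is a new break point with associated subset $S_{r_0}\setminus\{x\}$. Since $b_{r_0}-1$ lies strictly between the consecutive old break points $b_{r_0-1}$ and $b_{r_0}$, it is not an old break point, so $|X_{p^o(b_{r_0}-1)}|\ne b_{r_0}-1$; being always $\ge b_{r_0}-1$, it is in fact $\ge b_{r_0}$, and as $X_{p^o(b_{r_0}-1)}\subseteq X_{p^o(b_{r_0})}=S_{r_0}$ has at most $b_{r_0}$ elements, $X_{p^o(b_{r_0}-1)}=S_{r_0}$. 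Since $b_{r_0}-1<b_{r_0}$ we get $X_{p^n(b_{r_0}-1)}\setminus\{x\}=X_{p^o(b_{r_0}-1)}\setminus\{x\}=S_{r_0}\setminus\{x\}$, of cardinality $b_{r_0}-1$, which confirms the claim; and this subset contains $b$ (it lies in $S_{r_0}$ and differs from $x$) while omitting $a$, so $b<_{P'}a$. Assembling the cases yields $P'\preceq P_{|X\setminus\{x\}}$, which is the asserted refinement.
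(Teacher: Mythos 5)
Your proof is correct and follows essentially the same route as the paper's: both rest on Lemma \ref{lem:parking-p} to compute the break points of $\{X_t\setminus\{x\}\}$ from those of $\{X_t\}$, the only delicate point in each being the extra break point $b_{r_0}-1$ created inside the bubble containing $x$ when that bubble has more than one element. The only cosmetic difference is that you verify the order implication $b<_P a\Rightarrow b<_{P'}a$ directly (which is precisely the relation $\preceq$ for total preorders), whereas the paper phrases the conclusion as containment of bubbles.
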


  \begin{proof}
    We must show that each bubble of $X \backslash \{x\}$ is contained
    in a bubble of $X$.
    Let $b^\prime < b$ be successive break points for $\{X_t\}$ such
    that $x$ is in $X_{p^o(b)} \backslash X_{p^o(b^\prime)}$.
    First note that:
    \begin{itemize}
      \item The break points $\leq b^\prime$
    are the same for $\{X_t\}$ and $\{X_t \backslash \{x \} \}$. 
  \item If $c \geq b$,
    then $c$ is a break point for $\{X_t \backslash \{x \} \}$
    iff $c+1$ is a break point for $\{X_t\}$ by Lemma \ref{lem:parking-p}.
  \end{itemize}

    Let $c^\prime < c$ be
    successive break points of $X \backslash \{x \}$.
    If $c \leq b$ or $b \leq c^\prime $ it readily follows that
    the associated bubble for $\{X_t \backslash \{x \} \}$ is
    contained in the associated bubble for  $\{X_t\}$. 

    So assume $c^\prime < b \leq c$. Consider $X_{p^o(b)-1} \sus X_{p^o(b)}$.
    If this is an equality they have cardinality $b$ and
    by construction of $p^o(b)$ we must have $p^o(b-1) = p^o(b)-1$. 
    By Lemma \ref{lem:parking-p} $p^n(b-1) = p^o(b-1) = p^o(b)-1$
    and $b-1$ is a break point for
    $\{X_t \backslash \{x \} \}$. Whence $c^\prime = b-1$, and
    it follows that the associated bubble for
    $\{ X_t \backslash \{x\}\}$ is contained in a bubble for $\{X_t\}$.

    If $X_{p^o(b)-1}$ is strictly contained in $X_{p^o(b)}$, then
    since $p^o(b-1) < p^0(b)$, its cardinality must be $b-1$.
    Then $b-1$ must be break point (but note that possibly $p^o(b-1)
    < p^o(b)-1$). So $b^\prime = b-1$ and $X_{p^o(b)-1} = X_{p^o(b)} \backslash
    \{x\}$.  Letting
    $b^{\prime \prime}$ be the successor of $b$ among the break points
    for $\{X_t\}$, we have  $c^\prime = b-1$ and $c = b^{\prime \prime} -1$ and
    we have containment of the associated bubbles.
 \end{proof}


\begin{definition} Let $\{X_t\}$ be a parking filtration.
When $b$ is a break point
\[ \emptyset = X_0 \sus X_1 \sus \cdots \sus X_b,  \quad
  \emptyset = X_b \backslash X_b \sus
  X_{b+1} \backslash X_b \sus \cdots \sus X_n \backslash X_b  \]
are both parking filtrations, which we denote respectively
$\xlp{b}$ and $\xgp{b}$.
\end{definition}

\subsection{Bimonoid species of parking filtrations}

Let $\sPa[X]$ be the set of pairs $s = (\{X_i\}, \{Y_j\})$ of
parking filtrations on $X$. 
When $U \sus X$, we then get a pair $(\{U \cap X_i \}^{-}, \{U \cap Y_j\}^{-})$
where the index bar denotes parkization.
Proposition \ref{pro:parking-UX} gives that $\sPa$
is a restriction species.

Let $\pi_1(s)$ be the total preorder
associated to $\{X_i\}$ and $\pi_2(s)$ the total
preorder associated to $\{Y_j\}$, as in Definition \ref{def:park-order}.
Let $(U,V)$ be a cut for $\pi_1(s)$.
It corresponds to a break point $b$ for $\{X_i\}$,
and so $U = X_b$ and $V = X \backslash X_b$. 
We get
\[ \Delta^1 : \sPa[X] \pil \sPa[U] \times \sPa[V],
  \quad \Delta^1(s) = (\xlp{b}, \{U \cap Y_j\}^{-} ;
  \xgp{b}, \{V \cap Y_j\}^{-}). \]
Similarly $\Delta^2(s)$ is defined when $(U,V)$ is a cut for $\pi_2(s)$.

\medskip
\noindent{\it Species over preorders:}
 Lemma \ref{lem:parking-finer} shows that
 $\pi_1(\{U \cap X_j\}^-, \{U \cap Y_j\}^-)$
 is finer than the restriction $\pi_1(\{X_j\}, \{Y_j\})_{|U}$.
 Similarly for the second projection $\pi_2$.

 Furthermore if $b$ is a break point for $\{X_t\}$ and $U = X_b$,
 the total preorder
 from $\{X_j\}$ restricted to $U$, equals the preorder from $\xlp{b}$.
 Similarly if $V = X \backslash X_b$, the total preorder from $\{X_j\}$
 restricted to $V$ also equals the preorder from $\xgp{b}$.
 The situation for the second projection $\pi_2$ is similar.

\medskip
\noindent{\it Extension:} Given the diagram
\eqref{eq:species-uniq1}, we can construct $\{ X_j\}$
from $\frac{s_{AB}}{s_{CD}}$ since $A \cup B = X_p$ where $p$ is the cardinality
of $A \cup B$, and $p$ must be a break point
for $\{X_j\}$. Similarly we can construct $\{ Y_j\}$ from
$s_{AC} | s_{BD}$ as $A \cup C = Y_q$ where $q$ is the cardinality of
$A \cup C$ and $q$ must be a break point for $\{Y_j\}$. 

\medskip
\noindent {\it Cuts:} It is clear by construction of $\{X_j\}$ that
$A \cup B = X_b$ where $b$ is a break point for $\{X_j\}$.
The same goes for $\{Y_j\}$ and $A\cup C$.

\medskip
We get the following bimonoid species and Hopf algebra:
\begin{theorem}
  The species $\sPa$ over $\setmm$ where $\sPa[X]$ are pairs of parking
  filtrations on $X$, has two natural intertwining comonoids
  $\Delta^1, \Delta^2$ coming from the associated total preorders of
  the pairs
  
  Thus $(\sPa,\Delta^1, \mu_2)$ becomes a bimonoid in species and
  by the Fock functor we get  a Hopf algebra $\clK(\sPa)$ consisting
  of isomorphism classes of pairs of parking filtrations.
  \end{theorem}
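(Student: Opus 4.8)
The plan is to recognise this as an instance of the recipe of Subsection \ref{subsec:intertwined-species}: essentially all the work has been done in the three verification paragraphs immediately above, and what remains is to assemble them and invoke the general machinery. First I would record that $\sPa$ is a restriction species $\setii \pil \set$, the restriction of a pair $(\{X_i\},\{Y_j\})$ to $U \sus X$ being $(\{U\cap X_i\}^-,\{U\cap Y_j\}^-)$; this is well defined and functorial in coinjections precisely because equivalent filtrations restrict, after parkization, to equivalent filtrations, which is Proposition \ref{pro:parking-UX} (itself obtained from Lemma \ref{lem:parking-xX} by deleting one point at a time). Then, with $\pi_1(s)$ and $\pi_2(s)$ the total preorders of Definition \ref{def:park-order} attached to the first and the second filtration, I would check the two conditions of Definition \ref{def:bimonoid-rsp}: condition 1, $\pi_i(s_{|U}) \preceq \pi_i(s)_{|U}$, is Lemma \ref{lem:parking-finer} (again one deleted point at a time), and condition 2 is the computation of the ``species over preorders'' paragraph above --- a cut for $\pi_1(s)$ is a break point $b$, so the down-set is $X_b$ and the up-set $X\backslash X_b$, and the preorders coming from $\xlp{b}$ and $\xgp{b}$ coincide with $\pi_1(s)_{|X_b}$ and $\pi_1(s)_{|X\backslash X_b}$, and similarly for $\pi_2$. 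This shows $(\sPa,\pi_1)$ and $(\sPa,\pi_2)$ are restriction species over preorders, so \eqref{eq:bimonoid-resco} yields restriction comonoids $(\sPa,\Delta^1,\epsilon^1)$ and $(\sPa,\Delta^2,\epsilon^2)$, with $\epsilon^1=\epsilon^2$ the natural counit since $\sPa[\emptyset]=\{*\}$ (Remark \ref{rem:setcat-gradeo}).

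Next I would verify that $\Delta^1$ and $\Delta^2$ are intertwined (Definition \ref{def:species-inter}): fixing $X = A\scup B\scup C\scup D$, part 1 of Definition \ref{def:setcat-pb} for the square \eqref{eq:species-pb} is Corollary \ref{cor:species-pb}, and for part 2 I would run the ``extension'' and ``cuts'' paragraphs. The key point here --- and the step I expect to be the main obstacle --- is uniqueness of the extension: given compatible restriction data along the two legs of \eqref{eq:species-uniq1}, each of the two parking filtrations on $X$ must be reconstructed, and this is forced because the cardinality of the relevant down-set is compelled to be a break point of the corresponding filtration, so that down-set equals the matching stage of the filtration and the filtration is pinned down by its two pieces; one then checks that the reassembled $s\in\sPa[X]$ genuinely has $(A\scup B, C\scup D)$ a cut for $\pi_1(s)$ and $(A\scup C, B\scup D)$ a cut for $\pi_2(s)$. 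The rigidity of parking filtrations under gluing at break points, combined with the compatibility of parkization with restriction recorded in Proposition \ref{pro:parking-UX} and Lemma \ref{lem:parking-finer}, is exactly what makes this go through.

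With \eqref{eq:species-pb} established as a partial pull-back diagram, I would finish by citing Proposition \ref{pro:setcat-DeMu}: dualizing $(\Delta^2,\epsilon^2)$ to $(\mu_2,\iota_2)$ turns $(\sPa,\Delta^1,\mu_2,\epsilon^1,\iota_2)$ into a connected bimonoid in species, which over $\setz$ is automatically a Hopf monoid. Applying the bosonic Fock functor \cite[Section 15]{AgMa} then delivers the graded Hopf algebra $\clK(\sPa) = \bigoplus_n \kk\,\sPa[n]_{S_n}$, whose distinguished basis is the set of isomorphism classes of pairs of parking filtrations. Everything beyond the extension step --- functoriality, associativity and coassociativity of the coproducts, the counit axioms --- is routine bookkeeping within the framework of Sections \ref{sec:setcat}--\ref{sec:bimonoid}.
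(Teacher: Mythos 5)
Your proposal is correct and follows essentially the same route as the paper: the paper's proof of this theorem is precisely the assembly of the three verification paragraphs (species over preorders via Lemma \ref{lem:parking-finer} and the break-point computation, extension via the forced reconstruction of each filtration from the cardinality of the down-set being a break point, and cuts by construction), followed by an appeal to the general machinery of Subsection \ref{subsec:intertwined-species} and Proposition \ref{pro:setcat-DeMu} and the Fock functor. You have correctly identified the extension/uniqueness step as the only point with genuine content, and your justification of it matches the paper's.
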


\subsection{Sub- and quotient Hopf algebras by avoidance}
When we restrict to filtrations $\{Y_p\}$ of $X$ where each $p$ is a break
point, $|Y_p| = p$, this
corresponds to a total order on $X$. If $\sA$ is the subspecies
of pairs of parking filtrations where the second filtration
$\{Y_p\}$ has some $p$ with $|Y_p| > p$, the coproduct $\Delta^2$
is $\sA$-irreducible. Thus by Proposition \ref{pro:avoid-subquot}
we get a quotient bimonoid species
$\sPaA$ of $\sPa$. By Remark \ref{rem:parking-Hp}
the associated Fock functor $\clK(\sPaA)$
is the Hopf algebra of parking functions
{\bf PQSymm} of Thibon et.al. \cite{ThNo2}, which
now is realized as a quotient Hopf algebra of the larger Hopf
algebra $\clK(\sPa)$.

If we further let $\sA^\prime$ be the subspecies of $\sPaA$
consisting of parking filtrations
where there is a $q$ with
$|X_q| > q$, the coproduct $\Delta^1$ is $\sA^\prime$-irreducible.
As pairs of parking filtrations avoiding $\sA$ and $\sA^\prime$
corresponds to pair of total orders, this gives by Proposition
\ref{pro:avoid-subquot} the Malvenuto-Reutenauer
Hopf algebra as a sub Hopf algebra of the Hopf algebra of parking
functions.

\vskip 1.5cm
{\noindent \large \bf Part III: Hopf algebras of pairs of preorders.}

\section{Bimonoid species of pairs of preorders}
\label{sec:pairs}

We consider restriction species
$\sS : \setii \pil \set$
where $\sS[X]$ consists of pairs $(P,Q)$ of preorders on $X$.
This gives two natural structures as species over preorders,
by the two projections $\pi_1(P,Q) = P$ and $\pi_2(P,Q) = Q$.

We want to find such species for which the two associated
restriction comonoids are intertwined. By dualizing, say $\Delta^2$,
this gives new master bimonoid species and Hopf algebras.

\subsection{Systematic ways of extending}
Consider the diagram \eqref{eq:species-uniq1}.
To lift to a pair of preorders $(P,Q) = (\leq_1, \leq_2)$ on $X$, 
we must for each $x,y \in X$ determine the single way to 
compare them for the preorders $\leq_1$ and $\leq_2$, since there must
be exactly one such extension pair.

      If $x,y \in A\scup B, C\scup D, A \scup C$ or $B \scup D$, this
      information is already given in the diagram \eqref{eq:species-pb}.
      The only cases where it is not already determined are if they are in
      $A$ and $D$
      or in $B$ and $C$, say (for convenience rename $x$ and $y$
      to i. $a$ and $d$ or ii. $b$ and $c$):
\begin{itemize}
\item[i.] $a \in A$ and $d \in D$, or
\item[ii.] $b \in B$ and $c \in C$.
\end{itemize}
So we need to determine how $a,d $ and
$b,c$ should be compared in these cases. The extension should be unique.


\begin{lemma} \label{lem:pairs-ABCD}
  Given two preorders $\leq_1$ and $\leq_2$, and
  cut $(D_1,U_1)$ for $\leq_1$ and $(D_2,U_2)$ for $\leq_2$. Denote
 \[ A = D_1 \cap D_2,  \quad B = D_1 \cap U_2,
\quad C = U_1 \cap D_2, \quad D = U_1 \cap U_2. \] 

  a. Let $a \in A, d \in D$. Then they are not in the same
  $\geq_1$-bubble and not in the same $\geq_2$-bubble. If they are
  comparable for $\geq_1$, then $d >_1 a$. If they are comparable
  for $\geq_2$ then $d >_2 a$.

  b. Let $b \in B, c \in C$. Then they are not in the same
  $\geq_1$-bubble and not in the same $\geq_2$-bubble. If they are
  comparable for $\geq_1$, then $c >_1 b$. If they are comparable
  for $\geq_2$ then $b >_2 c$.
\end{lemma}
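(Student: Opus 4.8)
The plan is to argue directly from the definitions of down-set and up-set, since each assertion in parts a and b reduces to a one-line contradiction. Throughout, recall that $D_1$ is a down-set for $\leq_1$ with complementary up-set $U_1 = X \setminus D_1$, and similarly $D_2, U_2$ for $\leq_2$; recall also that two elements lie in the same bubble of a preorder precisely when each is $\leq$ the other, so ruling out one direction of a comparison simultaneously rules out the "same bubble" possibility.

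First I would treat part a. Fix $a \in A = D_1 \cap D_2$ and $d \in D = U_1 \cap U_2$. The relation $d \leq_1 a$ is impossible: $a$ lies in the down-set $D_1$, so $d \leq_1 a$ would force $d \in D_1$, contradicting $d \in U_1$. This kills both $a \circ_1 d$ and one direction of $\leq_1$-comparability, so $a$ and $d$ lie in distinct $\leq_1$-bubbles, and if they are $\leq_1$-comparable the comparison must be $a \leq_1 d$; since they are not in the same bubble this means $a <_1 d$, i.e. $d >_1 a$. The identical argument with subscript $2$ — now $d \leq_2 a$ is impossible because $a \in D_2$ — gives that $a,d$ lie in distinct $\leq_2$-bubbles and that $d >_2 a$ whenever they are $\leq_2$-comparable.

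Part b is the mirror image, with the two preorders occupying opposite sides. Fix $b \in B = D_1 \cap U_2$ and $c \in C = U_1 \cap D_2$. For $\leq_1$ the relation $c \leq_1 b$ is impossible, since $b \in D_1$ and $D_1$ a down-set would then force $c \in D_1$, against $c \in U_1$; this rules out $b \circ_1 c$ and one direction of $\leq_1$-comparability, so $b$ and $c$ lie in distinct $\leq_1$-bubbles and any $\leq_1$-comparison reads $b <_1 c$, i.e. $c >_1 b$. For $\leq_2$ the roles of $b$ and $c$ swap: now $b \leq_2 c$ is impossible, because $c \in D_2$ is in the down-set, so $b$ and $c$ lie in distinct $\leq_2$-bubbles and any $\leq_2$-comparison reads $c <_2 b$, i.e. $b >_2 c$.

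I do not expect a genuine obstacle here: every step is an immediate application of the closure property of down-/up-sets together with the fact that a bubble is a $\circ$-equivalence class. The only care required is bookkeeping — tracking which of $A, B, C, D$ lies in which down-set and which up-set for each of the two preorders, and using that "incomparable excluded, hence $\leq$ in one direction" combined with "not in the same bubble" yields the strict inequality $<$.
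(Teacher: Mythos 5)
Your proof is correct and is exactly the argument the paper has in mind: the paper's own proof simply states that the lemma ``is simple to see from the definitions of $A,B,C,D$,'' and your write-up supplies precisely those details, using the down-set closure property of $D_1$ and $D_2$ to exclude one direction of each comparison and hence both the shared-bubble case and the wrong strict inequality. Nothing is missing; this is the intended one-line-per-case verification, just written out.
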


\begin{proof} This is simple to see from the definitions of $A,B,C,D$.
\end{proof}

      The systematic ways to postulate how to compare $a,d$ 
      are  then the following four ways.
      For every $A,D$ obtained from cuts we consistently require exactly
      one of the following cases:
      
      \begin{itemize}
      \item Always let $d >_1 a$ and $d >_2 a$.
      \item Always let $d >_1 a$ and $d \nrel{2} a$.
        \item Always let $d \nrel{1} a$ and $d >_2 a$.
        \item Always let $d \nrel{1} a$ and $d \nrel{2} a$. 
      \end{itemize}
      
  The systematic ways to postulate how to compare $b,c$ 
      are the  following four ways. For every $B,C$ obtained from cuts as above:
      
      \begin{itemize}
      \item Always let $c >_1 b$ and $b >_2 c$.
      \item Always let $c >_1 b$ and $b \nrel{2} c$.
        \item Always let $c \nrel{1} b$ and $b >_2 c$.
      \item Always let $c \nrel{1} b$ and $b \nrel{2} c$. 
      \end{itemize}

For $\sim$ a relation on $X$ and two subsets $Y,Z$ of $X$, write
$Y \sim Z$ if $y \sim z$ for every $y \in Y$ and $z \in Z$.
We will use for instance $Y >_1 Z$, $Y \geq_1 Z$, $Y \nrel{1} Z$,
or $Y \circ_1 Z$. We now use notation as in Lemma \ref{lem:pairs-ABCD}.
  
\begin{lemma} \label{lem:types-gg}
  Consider arbitrary cuts $(D_1,U_1)$ for $\leq_1$ and $(D_2,U_2)$ for
  $\leq_2$.
\begin{itemize}
\item[a.] If for such cuts we always have $A <_1 D$,
  then we always have $B <_1 C$.
\item[b.] If for such cuts we always have $A <_2 D$,
  then we always have $C <_2 B$.
\end{itemize}
\end{lemma}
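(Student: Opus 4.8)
The plan is to reduce the hypothesis, which quantifies over \emph{all} pairs of cuts, to a statement about single pairs of elements by testing it on principal down-sets, and then to apply it once to a well-chosen pair of cuts. Recall that for a preorder $\leq_i$ on $X$ and $z\in X$ the set $\downarrow_i z:=\{w : w\leq_i z\}$ is a down-set (by transitivity), so $(\downarrow_i z,\; X\backslash\downarrow_i z)$ is a cut for $\leq_i$, and $z\in\downarrow_i z$ by reflexivity. I would prove part~a and obtain part~b by interchanging the roles of $\leq_1$ and $\leq_2$ (and correspondingly of $B$ and $C$).

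So assume $A<_1 D$ for every choice of cuts $(D_1,U_1)$ of $\leq_1$ and $(D_2,U_2)$ of $\leq_2$; fix such cuts, and fix $b\in B=D_1\cap U_2$ and $c\in C=U_1\cap D_2$. The goal is $b<_1 c$. First I would record two consequences of $b$ and $c$ lying on opposite sides of the respective cuts: since $b\in D_1$, $c\notin D_1$ and $D_1$ is a down-set, we get $c\not\leq_1 b$; symmetrically, since $c\in D_2$, $b\notin D_2$ and $D_2$ is a down-set, we get $b\not\leq_2 c$.

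The key step is to deduce $b\leq_1 c$. Suppose instead $b\not\leq_1 c$. Then $b\notin\downarrow_1 c$ and, by the previous paragraph, $b\notin\downarrow_2 c$, whereas $c$ lies in both $\downarrow_1 c$ and $\downarrow_2 c$. Applying the standing hypothesis to the cuts $(\downarrow_1 c,\;X\backslash\downarrow_1 c)$ and $(\downarrow_2 c,\;X\backslash\downarrow_2 c)$ — for which $c$ belongs to the corresponding set ``$A$'' and $b$ to the corresponding set ``$D$'' — yields $c<_1 b$, in particular $c\leq_1 b$, contradicting $c\not\leq_1 b$. Hence $b\leq_1 c$. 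Since also $c\not\leq_1 b$, the elements $b$ and $c$ do not lie in a common $\leq_1$-bubble, so $b<_1 c$, which establishes $B<_1 C$. Part~b is identical after the symmetric relabeling: one obtains $b\not\leq_2 c$ from $c\in D_2$, $b\notin D_2$, obtains $c\not\leq_1 b$ from $b\in D_1$, $c\notin D_1$, and, assuming $c\not\leq_2 b$, applies the part~b hypothesis to $(\downarrow_1 b,\dots)$ and $(\downarrow_2 b,\dots)$ to force a contradiction, giving $c\leq_2 b$ and hence $c<_2 b$.

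I do not expect a real obstacle: no case distinction is needed, and the whole content is a single application of the hypothesis to the principal-down-set cuts of $c$ (resp.\ $b$), with the pair $b,c$ entering in the ``swapped'' roles relative to the original cuts. The only place asking for a little care is the bookkeeping between the non-strict relation $\leq_1$ and the strict relation $<_1$ of Definition~\ref{def:pre-c}: one must observe that $c\not\leq_1 b$ simultaneously rules out $b$ and $c$ sharing a $\leq_1$-bubble and, in combination with $b\leq_1 c$, upgrades that to the strict $b<_1 c$. (One could also avoid the proof by contradiction by splitting on whether $c\leq_2 b$ and using the principal-down-set cut directly in one branch, but the contradiction route is shorter.)
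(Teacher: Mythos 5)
Your proof is correct, and it uses the same basic device as the paper's: the universally quantified hypothesis is applied to a second, specially constructed pair of cuts that puts $b$ and $c$ into the $A$/$D$ positions, forcing a $<_1$-comparison that clashes with what the original cuts already give. The paper instead first invokes Lemma \ref{lem:pairs-ABCD} to split into the cases $b \nrel{1} c$ and $c >_1 b$, and in the incomparable case chooses a separating $\leq_2$-cut with $b$ in the down-set and $c$ in the up-set; that orientation requires $c \not\leq_2 b$, which Lemma \ref{lem:pairs-ABCD} does not guarantee (it allows $b >_2 c$), so read literally the paper's argument needs the opposite orientation as a second sub-case. Your choice of principal down-sets of $c$, placing $c$ in the new ``$A$'' and $b$ in the new ``$D$'', needs only $b \not\leq_2 c$ and $b \not\leq_1 c$ --- the first holds unconditionally from the original $\leq_2$-cut, the second is exactly your contradiction hypothesis --- so you avoid both the case split and the orientation issue. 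The final bookkeeping ($b \leq_1 c$ together with $c \not\leq_1 b$ upgrades to the strict $b <_1 c$) is also handled correctly, as is the symmetric relabeling for part b.
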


\begin{proof}
  a. Let $b \in B$ and $c \in C$. Either they are i. not comparable
  $b \nrel{1} c$
  or ii. $c >_1b$. In the first case i, since they are not in the same
  $\geq_2$-bubble
there is $\geq_2$ cut separating them, say $c \in U^\prime_2$ and
$b \in D^\prime_2$.
But since $b$ and $c$ are not $\geq_1$-comparable, we can suitably get
$c \in U^\prime_1$ and $b \in D^\prime_1$. Then $c \in D^\prime$
and $b \in A^\prime$ and so $c >_1 b$, contradicting $b \nrel{1} c$.
Hence we must have case ii: $c >_1 b$. The argument for part b. is similar.
\end{proof}

\subsection{Preorders of type $\cc$}
We now want to characterize preorders $\geq_1$ and $\geq_2$
such that always:
\[ A <_1 D \text{ and } A <_2 D. \]
Such a pair is said to be of type $\cc$. 

For a preorder and $x \in X$ we write $U(x)$ for the set
$\{y \, | \, y \geq x \}$, the up-set generated by $x$.
Recall the notation $x \circ_i y$ from Definition \ref{def:pre-c},
related to the preorder $\leq_i$.

\begin{proposition} \label{pro:pairs-cc}
A pair of preorders has always $A <_1 D$ and $A <_2 D$ iff the following
two conditions hold:

\begin{itemize}
\item[1.] $x \nrel{1} y$ implies $x \circ_2 y$
\item[2.] $x \nrel{2} y$ implies $x \circ_1 y$
\end{itemize}
\end{proposition}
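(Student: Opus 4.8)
The plan is to prove both implications directly, with Lemma~\ref{lem:pairs-ABCD}a as the main tool: for any cuts $(D_1,U_1)$ of $\leq_1$ and $(D_2,U_2)$ of $\leq_2$, an element $a\in A=D_1\cap D_2$ and an element $d\in D=U_1\cap U_2$ lie in distinct $\geq_1$-bubbles and distinct $\geq_2$-bubbles, and whenever $a,d$ are comparable for $\geq_i$ one has $d>_i a$. For $(\Leftarrow)$, assume conditions 1 and 2, fix arbitrary such cuts, and pick $a\in A$, $d\in D$. If $a\nrel{1}d$ then condition 1 forces $a\circ_2 d$, contradicting Lemma~\ref{lem:pairs-ABCD}a; hence $a$ and $d$ are $\leq_1$-comparable and Lemma~\ref{lem:pairs-ABCD}a gives $a<_1 d$. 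Interchanging the indices $1$ and $2$ and using condition 2 gives $a<_2 d$. Since $a,d$ were arbitrary, $A<_1 D$ and $A<_2 D$.

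For $(\Rightarrow)$ I will establish condition 1; condition 2 follows by interchanging the roles of $\leq_1$ and $\leq_2$. Suppose $x\nrel{1}y$ while $x$ and $y$ lie in distinct $\leq_2$-bubbles. Then $y\not\leq_2 x$ or $x\not\leq_2 y$, and since $\nrel{1}$ is symmetric we may rename so that $y\not\leq_2 x$. Let $D_1=\{z\,|\,z\leq_1 x\}$ and $D_2=\{z\,|\,z\leq_2 x\}$ be the principal down-sets, with complementary up-sets $U_1$ and $U_2$; these are cuts for $\leq_1$ and $\leq_2$ respectively. Then $x\in D_1\cap D_2=A$, while $y\notin D_1$ (since $y\not\leq_1 x$, as $x\nrel{1}y$) and $y\notin D_2$ (since $y\not\leq_2 x$), so $y\in U_1\cap U_2=D$. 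The hypothesis $A<_1 D$ now forces $x<_1 y$, contradicting $x\nrel{1}y$. Hence $x\circ_2 y$, which is condition 1.

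I do not expect a serious obstacle. The one point needing care is in $(\Rightarrow)$: one must exhibit a single pair of cuts that simultaneously places $x$ in $A$ and $y$ in $D$, and the principal down-sets $\{z\,|\,z\leq_i x\}$ do exactly this, since $x\nrel{1}y$ supplies $y\not\leq_1 x$ while the renaming supplies $y\not\leq_2 x$.
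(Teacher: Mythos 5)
Your proof is correct and follows essentially the same route as the paper: for ($\Leftarrow$) incomparability would force $a$ and $d$ into a common bubble, contradicting their separation by the cut, and for ($\Rightarrow$) you build explicit cuts from principal down-sets of $x$ to place $x$ in $A$ and $y$ in $D$ and contradict $x \nrel{1} y$. The only cosmetic difference is that the paper derives $y \geq_2 x$ and $x \geq_2 y$ by two separate cut arguments whereas you handle both at once via a symmetry renaming; the substance is identical.
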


\begin{proof}
Assume the pair is of type $\cc$. 
Let $x \nrel{1} y$. 
Let $(U_1,D_1)$ be a $\geq_1$-cut with $y \in D_1$ and $x \in U_1$.
Consider the cut $(U_2(x),D_2)$. If not $y \geq_2 x$ then
$x \in U_1 \cap U_2 =: D$ and $y \in D_1 \cap D_2 =: A$. But then
$x >_1 y$, a contradiction. Hence we must have $y \geq_2 x$.
Similarly $x \geq_2 y$ and so $x \circ_2 y$.

Part 2 follows similarly.

\medskip
Suppose now the conditions 1 and 2 on $x$ and $y$ hold.
Let $a \in A$ and $d \in D$. If $a \nrel{1} d$ , they are
in a $\geq_2$-bubble, contradicting $a \in D_2$ and $d \in U_2$.
Since we cannot have $d \leq_1 a$, we must then have $d >_1 a$.
Similarly we have $d >_2 a$.
\end{proof}

\subsection{Preorders of type $\nc$}

We now want to characterize preorders $\geq_1$ and $\geq_2$ such that
always
\[ A \nrel{1} D, \quad A <_2 D. \]
We denote this as type $\nc$.
By Lemma \ref{lem:types-gg}
we must have $C <_2 B$. However we have two possibilities
\[ B \nrel{1} C, \quad B <_1 C. \]
These two possibilities are for the moment denoted as types $\nc_1$ and $\nc_2$.
However we shall see that $\nc_2$ really does not occur.

\begin{lemma} \label{lem:types-nc1}
  Given a pair of preorders. If the pair is of type $\nc$ the two
 following conditions hold:
 \begin{itemize}
   \item[1.] $x >_1 y$ implies $x \leq_2 y$,
\item[2.] $x \nrel{2} y$ implies $x \circ_1 y$.
\end{itemize}
\end{lemma}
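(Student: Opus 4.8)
The plan is to deduce both conditions from the defining property of type $\nc$ by choosing, for any would-be counterexample, one $\leq_1$-cut and one $\leq_2$-cut that force the two offending elements into positions $A$ and $D$ (in the notation of Lemma \ref{lem:pairs-ABCD}) in a way incompatible with $A \nrel{1} D$ or with $A <_2 D$. The only background fact I will use repeatedly is the elementary one that in a preorder $u \not\leq v$ holds precisely when some cut $(I,F)$ has $v \in I$ and $u \in F$ — concretely, the down-set $\{z \mid z \not\geq u\}$ contains $v$ but not $u$.

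For condition 1, I would argue by contradiction: assume $x >_1 y$ but $x \not\leq_2 y$. From $y <_1 x$ (so $y \not\geq_1 x$) I obtain a $\leq_1$-cut $(D_1,U_1)$ with $y \in D_1$ and $x \in U_1$; from $x \not\leq_2 y$ (so $y \not\geq_2 x$) I obtain a $\leq_2$-cut $(D_2,U_2)$ with $y \in D_2$ and $x \in U_2$. Forming $A,B,C,D$ from this pair of cuts places $y \in A$ and $x \in D$, so the type $\nc$ hypothesis forces $y \nrel{1} x$, contradicting $x >_1 y$. Hence $x \leq_2 y$.

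For condition 2, assume $x \nrel{2} y$ and suppose, towards a contradiction, that $x$ and $y$ lie in different $\leq_1$-bubbles. I would split into the cases $x >_1 y$, $y >_1 x$, and $x \nrel{1} y$. In the first two, condition 1 (already established, applied to the appropriately ordered pair) yields $x \leq_2 y$ or $y \leq_2 x$, each contradicting $x \nrel{2} y$. In the remaining case $x \nrel{1} y$, I choose a $\leq_1$-cut with $y \in D_1$, $x \in U_1$ (available since $x \not\leq_1 y$) and a $\leq_2$-cut with $y \in D_2$, $x \in U_2$ (available since $x \not\leq_2 y$); then $y \in A$, $x \in D$, and type $\nc$ gives $y <_2 x$, once more contradicting $x \nrel{2} y$. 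Therefore $x \circ_1 y$.

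There is no serious obstacle here: the argument is purely combinatorial and short. The one thing to get right is the bookkeeping — for each chosen pair of cuts one must correctly identify which of the four intersection boxes each of $x$ and $y$ falls into, and one must check, case by case, that the separating cuts invoked actually exist, which is exactly where the cut-separation fact for preorders enters. (In the incomparable case of condition 2 one could equally well swap the roles of up- and down-sets to land $x \in A$, $y \in D$ and appeal to $A <_2 D$ directly; the conclusion is the same.)
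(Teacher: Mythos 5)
Your proof is correct and follows essentially the same route as the paper: in each case you pick a $\leq_1$-cut and a $\leq_2$-cut that place the offending pair into $A$ and $D$, then contradict $A \nrel{1} D$ or $A <_2 D$, and in part 2 you dispose of the comparable cases via part 1 exactly as the paper does. The only difference is that you spell out the existence of the separating cuts (via the filter $U(x)$ and its complement), which the paper leaves implicit in "we may cut such that".
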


\begin{proof}
1. Suppose $x >_1 y$.
  If $x >_2 y$ or $x \nrel{2} y$, we may cut such that $x \in D$ and $y \in A$,
  which gives $x \nrel{1} y$ by type $\nc$, against assumption.
  Hence $x \leq_2 y$.

 2. Suppose $x \nrel{2} y$. If $x \nrel{1} y$
  we may cut with $\geq_1$ and $\geq_2$ such that $x \in D$ and $y \in A$
  and so $x >_2 y$, against assumption. Hence $x$ and $y$ are comparable
  for $\geq_1$.
  If $x >_1 y$, part 1 gives $x \leq_2 y$ against the assumption $x \nrel{2} y$.
  In the same way we cannot have $y >_1 x$. Thus $x$ and $y$ must
  be in the same $\geq_1$-bubble.
\end{proof}

\begin{lemma}
  If a preorder is of type $\nc_2$, then $A$ or $D$ are always
  empty, and so it is subsumed under class $\cc$.
\end{lemma}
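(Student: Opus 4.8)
The plan is to show that if a pair of preorders $(\leq_1,\leq_2)$ has the property that for all cuts $(D_1,U_1)$ of $\leq_1$ and $(D_2,U_2)$ of $\leq_2$ one always has $A\nrel{1}D$, $A<_2 D$ \emph{and} $B<_1 C$ (type $\nc_2$), then in fact $A=\emptyset$ or $D=\emptyset$ for every such pair of cuts, forcing $\cc$-behaviour vacuously (the conditions $A<_1 D$ and $A<_2 D$ hold trivially when one of the sets is empty). The key input is Lemma \ref{lem:types-nc1}, which applies since type $\nc_2$ is a special case of type $\nc$: we have (1) $x>_1 y \Rightarrow x\leq_2 y$ and (2) $x\nrel{2} y \Rightarrow x\circ_1 y$. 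We also have the extra $\nc_2$-hypothesis $B<_1 C$, which is the feature that will be used to derive the emptiness.

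**Main argument.** Suppose toward a contradiction that there exist cuts $(D_1,U_1)$ for $\leq_1$ and $(D_2,U_2)$ for $\leq_2$ with both $A=D_1\cap D_2$ and $D=U_1\cap U_2$ nonempty; pick $a\in A$ and $d\in D$. By the $\nc$-hypothesis $a\nrel{1}d$, and by Lemma \ref{lem:types-nc1}(2) applied in the $\leq_2$-direction (or directly from $a<_2 d$, whichever is cleaner) we get information tying $a$ and $d$ together. The idea is now to produce, from $a$ and $d$, a \emph{new} pair of cuts which witnesses a violation of the postulated $B<_1 C$ or $A\nrel{1}D$. Concretely: since $a<_2 d$ we may choose a $\leq_2$-cut $(D_2',U_2')$ with $a\in D_2'$, $d\in U_2'$; since $a\nrel{1}d$ we may choose a $\leq_1$-cut $(D_1',U_1')$ with $d\in D_1'$ and $a\in U_1'$ (this is where incomparability gives us freedom to split $\leq_1$ either way). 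With these cuts, $d\in D_1'\cap D_2'$ would land $d$ in the new $A'$, while $a\in U_1'\cap D_2'$ puts $a$ in the new $C'$ — or in the symmetric relabelling, one of $a,d$ sits in $B'$ and the other in $C'$ with the wrong comparability, contradicting $B'<_1 C'$. The bookkeeping of which of $a,d$ lands in which of $A',B',C',D'$ under the re-chosen cuts, and checking it genuinely contradicts one of the three postulated relations, is the crux.

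**Main obstacle.** The delicate point is the existence of the re-chosen cuts: given $a\nrel{1}d$ one must be sure a $\leq_1$-cut exists separating them in the desired direction, and likewise that the $\leq_2$-cut separating $a<_2 d$ can be taken compatibly (in particular, that one can make the intersections $D_i'\cap D_j'$ nonempty in exactly the way needed). This is where the lattice/topology structure of $\Pre(X)$ from Section \ref{sec:pre} is used: for a preorder, a down-set $D_2(a)$ (or its up-set complement) always gives such a cut, and incomparability $a\nrel{1}d$ means neither lies in the principal up-set of the other, so one can enlarge a principal down-set to a cut putting $d$ below the split and $a$ above, or vice versa. Once the cuts are in hand the contradiction with $B<_1 C$ is immediate. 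I would therefore organize the proof as: (i) reduce to choosing appropriate cuts from $a\in A$, $d\in D$; (ii) verify the required cuts exist using that $a\nrel{1}d$ and $a<_2 d$; (iii) compute the new $A',B',C',D'$ and observe the postulated $B<_1 C$ fails; (iv) conclude $A=\emptyset$ or $D=\emptyset$ always, hence the pair satisfies $A<_1 D$ and $A<_2 D$ vacuously and is of type $\cc$.
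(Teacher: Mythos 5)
Your argument is correct and rests on the same key device as the paper's proof: re-choosing the cuts so that the two elements land in $B$ and $C$, where the postulate $B <_1 C$ contradicts their $\leq_1$-incomparability. The paper organizes this in the opposite order: it first proves the pointwise implication $x \nrel{1} y \Rightarrow x \circ_2 y$ (by exactly your re-cutting trick), combines it with Lemma \ref{lem:types-nc1} to verify the conditions of Proposition \ref{pro:pairs-cc}, concludes the pair is of type $\cc$, and only then deduces that $A$ or $D$ is always empty because $A <_1 D$ and $A \nrel{1} D$ cannot both hold on nonempty sets; you run the contradiction directly on $a \in A$, $d \in D$ and obtain the $\cc$-membership vacuously, which is slightly more economical. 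One bookkeeping slip to repair in your write-up: with your chosen cuts $d \in D_1'$ and $d \in U_2'$, so $d$ lands in $B' = D_1' \cap U_2'$, not in $A'$; together with $a \in C' = U_1' \cap D_2'$, the postulate $B' <_1 C'$ forces $d <_1 a$, contradicting $a \nrel{1} d$, which is the symmetric relabelling you allude to and it does close the argument. The cuts you need do exist, as the principal down-sets $\{z \mid z \leq_2 a\}$ and $\{z \mid z \leq_1 d\}$ and their complements do the job.
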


\begin{proof}
  Suppose $x \nrel{1} y$. We can then not have $x >_2y$ as we could
  then cut such that $B$ contains $x$ and $C$ contains $y$,
  contradicting that $B <_1 C$. Similarly we cannot have $x \nrel{2} y$.
  Hence $x \circ_2 y$. Thus $x \nrel{1} y$ implies $x \circ_2 y$.
  From the above Lemma \ref{lem:types-nc1}, if $ x \nrel{2} y$ we
  have $x \circ_1 y$. 

  Hence any two elements $x,y$ of $X$ fulfills the conditions of
  Proposition \ref{pro:pairs-cc} and so the preorder is of type $\cc$.
  As a consequence for this subclass of $\cc$ we would always
  have $A$ or $D$ empty.
  \end{proof}

  So only type $\nc_1$ possible, and we denote it as just $\nc$.

  \begin{proposition} \label{pro:pairs-nc}
    A pair of preorders is of type $\nc$ iff the following
    holds:
    \begin{itemize}
    \item[1.] $x \nrel{2} y$ implies $x \circ_1 y$,
    \item[2.] $x >_1 y$ implies $x \circ_2 y$.
    \end{itemize}
  \end{proposition}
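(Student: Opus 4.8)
The plan is to prove the two implications separately, using Lemma~\ref{lem:pairs-ABCD} (which constrains the comparabilities among $A,B,C,D$), Lemma~\ref{lem:types-nc1} (the necessary conditions already isolated for type~$\nc$), and the fact established above that type~$\nc$ coincides with type~$\nc_1$, so that for arbitrary cuts we have not only $A\nrel{1}D$ and $A<_2 D$ but also $B\nrel{1}C$.

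For the direction ``type $\nc$ $\Rightarrow$ 1, 2'', condition~1 is precisely part~2 of Lemma~\ref{lem:types-nc1}. For condition~2, suppose $x>_1 y$; part~1 of Lemma~\ref{lem:types-nc1} already gives $x\le_2 y$, so it remains to exclude $x<_2 y$. Assuming $x<_2 y$, I would take the principal down-sets $D_1=\{z : z\le_1 y\}$ and $D_2=\{z : z\le_2 x\}$; these are order ideals, with $x\notin D_1$ (since $x>_1 y$) and $y\notin D_2$ (since $x<_2 y$). Then $y\in D_1\cap U_2=B$ and $x\in U_1\cap D_2=C$, so $B\nrel{1}C$ would force $x\nrel{1}y$, contradicting $x>_1 y$; hence $x\circ_2 y$.

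For the converse, assume 1 and 2, fix arbitrary cuts $(D_1,U_1)$, $(D_2,U_2)$, and pick $a\in A$, $d\in D$. By part~a of Lemma~\ref{lem:pairs-ABCD}, $a$ and $d$ share neither a $\ge_1$-bubble nor a $\ge_2$-bubble, and any comparability reads $d>_i a$. If $a\nrel{2}d$, condition~1 gives $a\circ_1 d$, impossible; so $a,d$ are $\ge_2$-comparable and $a<_2 d$, giving $A<_2 D$. If instead $a,d$ were $\ge_1$-comparable, then $d>_1 a$, so condition~2 gives $d\circ_2 a$, again impossible; hence $a\nrel{1}d$ and $A\nrel{1}D$. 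Applying the same pattern to $b\in B$, $c\in C$ via part~b of Lemma~\ref{lem:pairs-ABCD} also yields $B\nrel{1}C$, so the pair is genuinely of type $\nc$; since the cuts were arbitrary, all of this holds always.

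I expect the only real obstacle to be condition~2 in the forward direction: Lemma~\ref{lem:types-nc1} delivers only the weak conclusion $x\le_2 y$, and upgrading it to $x\circ_2 y$ genuinely requires the extra relation $B\nrel{1}C$ — no choice of cuts places $x$ and $y$ into $A$ and $D$ in a way that forces a contradiction, since $y\le_1 x$ blocks separating them by a $\ge_1$-down-set in the needed direction. The care point is therefore to invoke that type~$\nc$ means $\nc_1$, rather than only its bare defining inequalities $A\nrel{1}D$ and $A<_2 D$.
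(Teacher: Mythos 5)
Your proof is correct and follows essentially the same route as the paper: condition 1 is read off from Lemma \ref{lem:types-nc1}, condition 2 is obtained by upgrading $x\le_2 y$ to $x\circ_2 y$ via a cut placing $y\in B$ and $x\in C$ and invoking $B\nrel{1}C$ (i.e.\ that $\nc$ means $\nc_1$), and the converse runs through Lemma \ref{lem:pairs-ABCD} exactly as in the paper. Your explicit principal down-sets and the closing remark about why the bare inequalities $A\nrel{1}D$, $A<_2 D$ do not suffice only make explicit what the paper leaves implicit.
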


  \begin{proof}
    If it is of type $\nc$, by Lemma \ref{lem:types-nc1}
    it fulfills part 1. If $x >_1 y$, by
    the same lemma $y \geq_2 x$. If $y >_2 x$, we could cut so that
    $y \in B$ and $x \in C$, contradicting that $B \nrel{1} C$.
    Thus $x$ and $y$ must be in the same bubble.

    \medskip
    Conversely, suppose 1 and 2. Given $a \in A$ and $d \in D$. By
    part 1 we must have $a <_2 d$. If $a <_1 d$ then by part 2 they are in the
    same $\geq_2$-bubble, a contradiction. So $a \nrel{1} d$. Further
    let $b \in B$ and $c \in C$. If $b <_1 c$ they are by part 2 in the
    same $\geq_2$-bubble, which they cannot be. Thus $b \nrel{1} c$.
  \end{proof}
  We can also have preorders $(\leq_1,\leq_2)$ of type $\cn$ but by
  switching them to $(\leq_2,\leq_1)$ we get a pair of type $\nc$.
    
  \subsection{Preorders of type $\nn$}
We now want to characterize preorders $\geq_1$ and $\geq_2$ such that
always
\[ A \nrel{1} D, \quad A \nrel{2} D. \]
We denote such a pair of preorders  as type $\nn$.
For $B$ and $C$ we have the possibilities
\[ i)\,\, B \nrel{1} C \text{ or } C >_1 B,  \quad ii)\,\, B \nrel{2} C \text{ or }
  B >_2 C\]
We shall show that only the case
\[B \nrel{1} C,\quad B \nrel{2} C\] occurs.

\begin{lemma} \label{lem:types-nn}
  Given a pair of preorders. If it is of type $\nn$ the following holds:
  \begin{itemize}
  \item[1.] $x>_1 y$ implies $x \leq_2 y$,
  \item[2.] $x >_2 y$ implies $x \leq_1 y$.
  \end{itemize}
\end{lemma}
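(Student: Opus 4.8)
The plan is to prove part~1 by contradiction, constructing the two cuts needed to land $x$ and $y$ in the corners $D$ and $A$; part~2 will then follow immediately by the symmetry exchanging $\leq_1$ and $\leq_2$, since both the definitions $A = D_1 \cap D_2$, $D = U_1 \cap U_2$ and the type $\nn$ hypothesis $A \nrel{1} D$, $A \nrel{2} D$ are invariant under that exchange.

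For part~1 I would argue as follows. Suppose $x >_1 y$ but, contrary to the claim, $x \not\leq_2 y$; then either $x >_2 y$ or $x \nrel{2} y$. The key observation is that for $i = 1,2$ the principal filter $U_i(x) = \{z : z \geq_i x\}$ is an up-set, so $(X \setminus U_i(x),\, U_i(x))$ is a $\leq_i$-cut, and $y$ lies in the down-set $X \setminus U_i(x)$ precisely when $x \not\leq_i y$. Since $x >_1 y$ we get $x \not\leq_1 y$, so the $\leq_1$-cut $(X \setminus U_1(x), U_1(x))$ puts $y$ in its down-set and $x$ in its up-set. In the first case $x >_2 y$ gives $x \not\leq_2 y$, and in the second case $x \nrel{2} y$ also gives $x \not\leq_2 y$; either way the $\leq_2$-cut $(X \setminus U_2(x), U_2(x))$ puts $y$ in its down-set and $x$ in its up-set. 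Combining the two cuts, $y \in A = D_1 \cap D_2$ and $x \in D = U_1 \cap U_2$, so the type $\nn$ hypothesis forces $y \nrel{1} x$, contradicting $x >_1 y$. Hence $x \leq_2 y$, proving part~1; part~2 is the mirror image.

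I do not expect a genuine obstacle here: the lemma is essentially an unwinding of the type $\nn$ condition against the two ``extreme'' cuts $U_i(x)$, and the only thing requiring a line of care is verifying that these really are cuts and that $y$ falls on the correct side of each, both of which are immediate from reflexivity/transitivity and the comparability hypotheses on $x,y$. This is in the same spirit as the proofs of Lemmas~\ref{lem:types-gg} and \ref{lem:types-nc1}, where separating cuts are produced in the same ad hoc way.
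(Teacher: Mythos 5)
Your proof is correct and follows the same route as the paper: the paper's proof also assumes $x >_1 y$ with $x >_2 y$ or $x \nrel{2} y$, produces cuts placing $x \in D$ and $y \in A$, and derives a contradiction with $A \nrel{1} D$. You merely make explicit the choice of separating cuts (the principal filters $U_i(x)$) that the paper leaves implicit, which is a welcome precision but not a different argument.
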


\begin{proof}
  Suppose $x >_1 y$. If $x \nrel{2} y$ or $x >_2 y$ we can
  cut such that $x \in D$ and $y \in A$. This contradicts that we should
  have $x \nrel{1} y$. Hence we must have $x \leq_2 y$.
  Case 2 is similar.
\end{proof}

\begin{lemma}
  Given a pair of preorders of type $\nn$. If always $C >_1 B$, then
  always either $A$ or $D$ is empty.
  Similarly if always $B >_2 C$,
  then always either $A$ or $D$ is empty.
Hence these cases are subsumed under $\cc, \nc$ or $\cn$.
  
  Hence a pair of preorders of type $\nn$ may be taken to fulfill
  \[ B \nrel{1} C, \quad B \nrel{2} C. \]
\end{lemma}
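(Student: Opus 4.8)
The plan is to prove both halves by contradiction, exploiting the fact that any two incomparable elements of a preorder can be separated by a cut in either direction, via a principal down-set. I treat the first half, \emph{always $C >_1 B$}; the second is symmetric.

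Suppose, for a type-$\nn$ pair with always $C >_1 B$, that some $\leq_1$-cut $(D_1,U_1)$ and $\leq_2$-cut $(D_2,U_2)$ have both $A = D_1 \cap D_2$ and $D = U_1 \cap U_2$ nonempty. Fix $a \in A$ and $d \in D$; by the definition of type $\nn$ (the relations $A \nrel{1} D$ and $A \nrel{2} D$) we get $a \nrel{1} d$ and $a \nrel{2} d$. I now replace the two cuts by new ones chosen to swap the roles of $a$ and $d$: for $\leq_1$ take the principal down-set $D_1' = \{z : z \leq_1 d\}$, which is a genuine down-set by transitivity, contains $d$, and, since $a \not\leq_1 d$, has $a$ in its complement $U_1'$; for $\leq_2$ take $D_2' = \{z : z \leq_2 a\}$, which contains $a$ and, since $d \not\leq_2 a$, has $d \in U_2'$. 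For these cuts, $d$ lies in $D_1' \cap U_2'$, the new $B'$, and $a$ lies in $U_1' \cap D_2'$, the new $C'$. Applying the hypothesis \emph{always $C >_1 B$} to this new pair of cuts yields $a >_1 d$, contradicting $a \nrel{1} d$. Hence no such pair of cuts exists, i.e.\ $A$ or $D$ is always empty. For \emph{always $B >_2 C$} one runs the same argument with the down-sets $\{z : z \leq_1 a\}$ and $\{z : z \leq_2 d\}$, which place $a$ in the new $B'$ and $d$ in the new $C'$; then $B' >_2 C'$ forces $a >_2 d$, again contradicting $a \nrel{2} d$.

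It remains to see why this makes these cases ``subsumed under'' $\cc$: once, for every pair of cuts, one of $A, D$ is empty, the requirements $A <_1 D$ and $A <_2 D$ hold vacuously for every pair of cuts, so the pair is in particular of type $\cc$. Consequently the only genuinely new alternative in the type-$\nn$ classification is $B \nrel{1} C$ together with $B \nrel{2} C$, which is the asserted normalization. I expect the only delicate point to be the bookkeeping of which of the new sets $B', C'$ the elements $a$ and $d$ land in for each of the two cases; the construction of the separating cuts is routine, since $a \nrel{i} d$ is exactly what guarantees that the principal down-set of one of them excludes the other.
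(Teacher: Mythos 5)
Your proof is correct and follows essentially the same route as the paper's: pick $a\in A$, $d\in D$, use $a\nrel{1}d$ and $a\nrel{2}d$ to re-cut so that $a$ lands in the new $C$ and $d$ in the new $B$, and derive a contradiction with the comparability hypothesis. The only difference is that you spell out the separating cuts explicitly as principal down-sets, a detail the paper leaves implicit with ``we can cut in another way.''
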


\begin{proof} Let $a \in A$ and $d \in D$. Since $a \nrel{1} d$ and
  $a \nrel{2} d$ we can cut in another way such that $a \in C$ and $d \in B$.
  But this contradicts that $B <_1 C$. Hence we cannot have both $A$ and
  $D$ non-empty. Similarly we cannot always have $C <_2 B$.
\end{proof}

\begin{proposition} \label{pro:pairs-nn}
  A pair of preorders is of type $\nn$ iff the following holds:

  \begin{itemize}
  \item[i.] $x >_1 y$ implies $x \circ_2 y$,
  \item[ii.] $x >_2 y$ implies $x \circ_1 y$.
  \end{itemize}
\end{proposition}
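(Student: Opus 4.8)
The plan is to follow the pattern of the proofs of Propositions \ref{pro:pairs-cc} and \ref{pro:pairs-nc}, exploiting the symmetry between $\leq_1$ and $\leq_2$ and using the preceding lemma, so that a pair of type $\nn$ may be assumed to satisfy $B \nrel{1} C$ and $B \nrel{2} C$ as well.

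For the forward implication I would assume the pair is of type $\nn$ and prove condition i first. Let $x >_1 y$. Lemma \ref{lem:types-nn} already gives $x \leq_2 y$, so it suffices to rule out $x <_2 y$. I would argue by contradiction: since $x <_2 y$ there is a $\leq_2$-cut $(D_2,U_2)$ with $x \in D_2$, $y \in U_2$ (take $U_2$ the filter generated by $y$), and since $x >_1 y$ there is a $\leq_1$-cut $(D_1,U_1)$ with $y \in D_1$, $x \in U_1$ (take $U_1$ the filter generated by $x$). Then $x \in C = U_1 \cap D_2$ and $y \in B = D_1 \cap U_2$, while $x >_1 y$, contradicting $B \nrel{1} C$. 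Hence $x \circ_2 y$, which is i. Condition ii follows by the mirror-image argument with $\leq_1$ and $\leq_2$ swapped, landing $x \in B$ and $y \in C$ with $x >_2 y$ and contradicting $B \nrel{2} C$.

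For the converse I would assume i and ii, fix arbitrary cuts $(D_1,U_1)$, $(D_2,U_2)$, and form $A,B,C,D$. The routine observations are: for $a \in A$, $d \in D$ one has neither $d \leq_1 a$ nor $d \leq_2 a$ (since $D_i$ is a down-set containing $a$ but not $d$); for $b \in B$, $c \in C$ one has neither $c \leq_1 b$ nor $b \leq_2 c$. Then if $a$ and $d$ were $\leq_1$-comparable the only option is $d >_1 a$, so i forces $d \circ_2 a$, hence $d \leq_2 a$, a contradiction; thus $a \nrel{1} d$, and by the symmetric use of ii, $a \nrel{2} d$. The same computation for $b, c$ yields $b \nrel{1} c$ and $b \nrel{2} c$. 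As this holds for all cuts, the pair is of type $\nn$.

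The only things to be careful about are strictness — distinguishing $d >_1 a$ from the bubble case once the comparability $d \leq_1 a$ has been excluded — and citing correctly that the apparent alternatives $C >_1 B$ and $B >_2 C$ are already absorbed into the types $\cc$, $\nc$, $\cn$ by the preceding lemma, so that the hypothesis ``type $\nn$'' legitimately supplies $B \nrel{1} C$ and $B \nrel{2} C$. I do not anticipate any deeper difficulty.
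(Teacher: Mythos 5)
Your proposal is correct and follows essentially the same route as the paper: the forward direction invokes Lemma \ref{lem:types-nn} to get $x\leq_2 y$ and then rules out $x<_2 y$ by producing cuts that place $x\in C$ and $y\in B$, contradicting $B\nrel{1}C$ (which, as you correctly note, the preceding lemma guarantees for non-degenerate type-$\nn$ pairs); the converse is the same routine check on $a\in A$, $d\in D$. Your write-up is in fact slightly more careful than the paper's about the strict/bubble distinction and about also checking the $B$,$C$ incomparabilities, but these are refinements, not a different argument.
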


\begin{proof}
  Let it be of type $\nn$ and suppose $x >_1 y$. Then $x \leq_2 y$.
  If $x <_2 y$ we could cut such that $x \in C$ and $y \in B$, giving
  a contradiction since $B \nrel{1} C$. Hence $x$ and $y$ must be
  in the same $2$-bubble. Part 2 is similar.

  \medskip
  Conversely if 1 and 2 hold then clearly if $a \in A$ and $d \in D$
  we cannot have $a <_1 d$ or $a <_2 d$. So we are in case $\nn$.
\end{proof}

\subsection{Species of pairs of preorders}
We get the following species:

\begin{itemize}
\item $\sCC[X]$ is the set of pairs of preorders on $X$ of type $\cc$,
\item $\sNC[X]$ is the set of pairs of preorders on $X$ of type $\nc$,
\item $\sCN[X]$ is the set of pairs of preorders on $X$ of type $\cn$,
\item $\sNN[X]$ is the set of pairs of preorders on $X$ of type $\nn$,
\end{itemize}

These are restriction species by respectively Propositions
\ref{pro:pairs-cc}, \ref{pro:pairs-nc}, and \ref{pro:pairs-nn}.
With the natural projections to the first and second factors they
become restriction species over preorders.
Furthermore in each case the corresponding two comonoids are intertwined:
In each diagram \eqref{eq:species-uniq1} we get a unique extension
$s$ by the requirements we have on respectively $\cc, \nc, \cn$ and $\nn$:
What must be determined for each extension is how to compare $A$ and $D$, and
$B$ and $C$, and the types tell us how to do this.
So we get:

\begin{theorem}
  Each of the pairs of restriction comonoid species:
  \[ (\sCC, \Delta_1, \Delta_2), \quad (\sNC, \Delta_1, \Delta_2), \quad
   (\sCN, \Delta_1, \Delta_2), \quad 
    (\sNN, \Delta_1, \Delta_2), \]
 gives two intertwined comonoids.  Dualizing various coproducts we get four
 distinct  bimonoids in species:
 \[ (\sCC, \Delta_1, \mu_2), \quad (\sNC, \Delta_1, \mu_2), \quad
  (\sCN, \Delta_1, \mu_2), \quad 
    (\sNN, \Delta_1, \mu_2). \]
By the Fock functor we get four ``master'' 
Hopf algebras with bases pairs of preorders $(\geq_1, \geq_2)$ of
respectively types
$\cc, \nc, \nc, \nn$.
\end{theorem}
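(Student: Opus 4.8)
The plan is to verify, for each of $\sCC$, $\sNC$, $\sNN$ equipped with the two projections $\pi_1(P,Q)=P$ and $\pi_2(P,Q)=Q$, the three-step criterion of Subsection~\ref{subsec:intertwined-species}, and then to invoke Proposition~\ref{pro:setcat-DeMu} and the Fock functor. The first step (``species over preorders'') is essentially free: restriction of a pair is componentwise, $(P,Q)_{|Y}=(P_{|Y},Q_{|Y})$, so Definition~\ref{def:bimonoid-rsp}(1) and (2) reduce to the equalities $\pi_i(s_{|Y})=\pi_i(s)_{|Y}$ (which hold because $\pi_i$ is a projection) together with the fact that $(P_{|Y},Q_{|Y})$ is again of the relevant type — and this last fact is immediate from the pairwise characterizations in Propositions~\ref{pro:pairs-cc}, \ref{pro:pairs-nc}, \ref{pro:pairs-nn}, whose conditions involve only pairs of elements and hence survive restriction. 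In particular each $(\sS,\Delta^i,\epsilon_i)$ is a comonoid, and Corollary~\ref{cor:species-pb} already yields part~1 of Definition~\ref{def:setcat-pb} for the square~\eqref{eq:species-pb}.

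The real content is the ``extension'' step. Given a diagram~\eqref{eq:species-uniq1} with $(A,C),(B,D)$ cuts for $\pi_1$ of $s_{AC},s_{BD}$ and $(A,B),(C,D)$ cuts for $\pi_2$ of $s_{AB},s_{CD}$, one observes that for any pair $(P,Q)=(\leq_1,\leq_2)$ on $X$ restricting to this data, all comparisons except those between an element of $A$ and one of $D$, and between an element of $B$ and one of $C$, are already forced by the four corner pieces (which agree on $A,B,C,D$ by the already-established part~1 of Definition~\ref{def:setcat-pb}). For the remaining pairs, Lemma~\ref{lem:pairs-ABCD} says the elements lie in no common $\geq_1$- or $\geq_2$-bubble and pins down the direction of any strict comparison, and then the \emph{definition} of the type dictates a unique choice — using Lemma~\ref{lem:types-gg} to propagate the requirement on $A,D$ to $B,C$, and using the reductions (type $\nc_2$ collapses to $\cc$, the degenerate $\nn$-subcases collapse to $\cc$/$\nc$/$\cn$) to eliminate the spurious alternatives. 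So at most one candidate $(P,Q)$ exists; it then remains to check that this candidate is a genuine pair of preorders of the stated type. I expect \emph{this} to be the only delicate point: reflexivity is trivial and the type is verified block-by-block against Propositions~\ref{pro:pairs-cc}--\ref{pro:pairs-nn} (mixed pairs satisfying the criterion by the imposed block relations), but transitivity must be checked by running through chains $x\leq_i y\leq_i z$ whose three members are distributed over the four blocks: chains confined to two blocks reduce to transitivity inside a corner piece, and the chains that cross an $A$--$D$ or $B$--$C$ gap are resolved by the block inequalities just imposed together with the down-set/up-set property of the original cuts. This is nothing but the concrete form of the requirement that~\eqref{eq:setcat-pbp} be a pullback in $\set$.

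For the ``cuts'' step I would show that $(A\scup B,C\scup D)$ is a cut for $\pi_1(P,Q)=P$ and $(A\scup C,B\scup D)$ a cut for $\pi_2(P,Q)=Q$: if $x\in A\scup B$ and $y\leq_1 x$, then $y$ cannot lie in $C$ or $D$ without contradicting either that $A$ is a down-set in the relevant corner restriction or one of the block relations ($A<_1 D$, $B<_1 C$, $B$ a down-set in $P_{|BD}$) now forced on the extension; the second cut is symmetric. Together with the earlier steps this makes~\eqref{eq:species-pb} a partial pullback diagram, so $\Delta^1$ and $\Delta^2$ are intertwined, and Proposition~\ref{pro:setcat-DeMu} produces the bimonoid species by dualizing either coproduct. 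Finally I would note the bookkeeping behind ``four'': for $\cc$ and $\nn$ the swap $(P,Q)\mapsto(Q,P)$ is a species automorphism exchanging $\Delta^1$ with $\Delta^2$, so the two bimonoids $\sB^1,\sB^2$ coincide up to isomorphism and contribute one each, whereas type $\nc$ is not swap-symmetric (its image under the swap is type $\cn$), so $\sB^1$ and $\sB^2$ are genuinely different; this gives $1+2+1=4$ connected bimonoid species, and applying the Fock functor $\clK$ of Section~\ref{sec:respre} turns each into a graded connected Hopf algebra whose degree-$n$ piece has basis the isomorphism classes of pairs of preorders of the given type on an $n$-element set.
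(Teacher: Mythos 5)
Your proposal is correct and follows essentially the same route as the paper: the paper's own argument is the short paragraph preceding the theorem, which likewise reduces everything to the checklist of Subsection \ref{subsec:intertwined-species} and observes that the only undetermined data in the extension step are the $A$--$D$ and $B$--$C$ comparisons, which the type forces uniquely via Propositions \ref{pro:pairs-cc}--\ref{pro:pairs-nn}. You are in fact more explicit than the paper on the two points it leaves implicit — the transitivity/type verification of the extended pair and the swap-symmetry bookkeeping explaining why six dualizations collapse to four bimonoids — and both of those elaborations are sound.
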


\begin{example} Consider the species $\sA$ such that $\sA[X]$ is empty
  save when $X = \{x,y\}$ has two elements: Then
  $\sA[X]$ consists 
  of the $\cc$ pairs $\geq_1, \geq_2$
  where $\geq_2$ is the coarse topology, with $x \circ_2 y$.
  The coproduct $\Delta_2$ is $\sA$-irreducible for
  this species, so by Proposition \ref{pro:avoid-subquot} we get a 
  a sub-bimonoid species $\sCC^1_{/\sA}$ of $\sCC^1 = (\sCC,\Delta_1, \mu_2)$.
  Similarly, let $\sB$ be the species with $\sB[X]$ empty save
  when $X = \{x,y\}$ has cardinality two. Then $\sB[X]$
  consists of the $\cc$ pairs
  such that $x \circ_1 y$. Then $\Delta_1$ is $\sB$-irreducible,
  and so again Proposition \ref{pro:avoid-subquot} gives a quotient
  bimodule species
  $\sCC^1_{/\sA \sB}$  of $\sCC^1_{/\sA}$.
  This species $\sCC^1_{/\sA \sB}$
  identifies as the species of pairs of total orders, and so
  gives the Malvenuto-Reutenauer Hopf algebra by the Fock functor.
\end{example}

\begin{remark}
  Double preorders $(\leq_1, \leq_2)$ have been considered in various
  contexts, perhaps explicitly first as double posets in \cite{MR-DP}.
  Their interest is the notion of {\it picture} of pairs of double
  posets, giving a Hopf pairing on the Hopf algebra of double posets.
  Pictures were originally considered in the more special setting of
  tableaux by Zelevinsky \cite{Zel}.

  Double preorders are considered
  in \cite{FMP}. They get a Hopf algebra by letting the coproduct
  of $(P,Q)$ be determined by the cuts in $P$. The product of
  two pairs $(P_1,Q_1)$ and $(P_2,Q_2)$ is $(P,Q)$ where $P$ is the
  disjoint union of $P_1$ and $P_2$, and $Q$ is the union of $Q_1$ and
  $Q_2$ such that every element of $Q_2$ is made greater than every element
  of $Q_1$. This multiplication dualizes to a coproduct
  which fits into our setting. This coproduct
  comes from a restriction species over preorders, where the preorder
  associated to a pair $(P,Q)$ is $P^\bullet\, \vee \, \tot(Q)$, the join of
  $P^\bullet$, the partition order on $X$ whose bubbles are the connected
  components of $P$, and $\tot(Q)$, the minimal total preorder $T$ such that
  $Q$ refines $T$.

  Note that the Hopf algebra of \cite{FMP} has a basis consisting of
  {\it all} double preorders. In this section our double posets $(P,Q)$
  come with requirements on how $P$ and $Q$ are interrelated. 
  But they then become a species over preorders in the simplest ways,
  by the two projections.
  \end{remark}

     
\section{Pairs of types $\cc, \nc, \nn$ and
Hopf algebras by avoidance}
\label{sec:HApairs}

We give a more visual description of pairs of preorders
of types $\cc, \nc$ and $\nn$.
Let $Q$ be a preorder and $\fB$ a subset of the bubbles in $Q$.
A preorder $P$ which is a refinement of $Q$
(see Definition \ref{def:refine-refine})
is a {\it refinement of $Q$ with
respect to $\fB$} if every bubble of $Q$ which is not in $\fB$ is
a bubble of $P$. In other words, only bubbles in $\fB$ are possibly
refined.

\subsection{Basic situation} \label{subsec:basic-HApairs}
The following situation will be a common theme.

\medskip
{\noindent \bf Basic situation.}
Let $O_1$ and $O_2$ be two partition orders. Let
$\fB_1$ be a set of bubbles in $O_1$ and $\fB_2$ a set of bubbles in
$O_2$ such that:

\begin{itemize}
\item $\fB_1$ and $\fB_2$ are disjoint, i.e. no bubble is in both these
  sets, 
\item Every element of $\fB_1$ (this element is a bubble in $O_1$)
  is contained in a bubble of $O_2$,
\item Every element of $\fB_2$ (a bubble of $O_2$)
  is contained in a bubble of $O_1$,
\end{itemize}

In particular if $O_1$ and $O_2$ are such that there are no inclusions
between any pair of bubbles from them, $\fB_1$ and $\fB_2$ must be empty.

\medskip

\subsection{Type $\cc$}

Consider the basic situation above.

\begin{example}
  In Figure \ref{Fig-TT} we start from two total preorders $T_1$ and $T_2$.
  The brown boxes represent the bubbles of these two total orders, and
  how these bubbles are ordered is indicated by arrows.
  In $T_1$ the bubbles have sizes successively $3,6,4$.
  In $T_2$ the sizes of bubbles are successively $3, 3,7$.
  The partial order $\leq_1$ is a refinement of $T_1$,
  it refines the left bubbles with $3$ and $4$ elements.
  The partial order $\leq_2$ is a refinement
  of $T_2$, it refines the right bubbles which both have three elements.
  \end{example}
\begin{figure}
\begin{center}
\begin{tikzpicture}[inner sep=1pt, scale=1.3]
\draw [help lines, white] (-2,-2) grid (4,2);
\draw[thick, brown, rounded corners ] (-1.7,-0.9) rectangle (-0.1, -0.1) {};
\draw[thick, brown, rounded corners] (-1.7,0.9) rectangle (-0.1,0.1) {};
\draw[violet, rounded corners] (0.1,-0.9) rectangle (1.7, 0.9) {};

\filldraw[gray,thin] (0.4,0.7) circle (0.05);
\filldraw[gray,thin] (0.9,0.5) circle (0.05);
\filldraw[gray,thin] (1.3,0.2) circle (0.05);
\filldraw[gray,thin] (1.1,-0.3) circle (0.05);
\filldraw[gray,thin] (0.5,-0.5) circle (0.05);
\filldraw[gray,thin] (1.5,-0.6) circle (0.05);

\node (aa) at (-1.3,0.3) {};
\node (cc) at (-1.3,0.7) {};
\node (bb) at (-0.6,0.3) {};
\node (dd) at (-0.6, 0.7) {};

\fill (aa) circle (0.06);
\fill (cc) circle (0.06);
\fill (bb) circle (0.06);
\fill (dd) circle (0.06);

\draw (aa)--(cc); 
\draw (cc)--(bb);
\draw (bb)--(dd);

\node (anv) at (-1.3,-0.3) {};
\node (cnv) at (-0.95,-0.7) {};
\node (bnv) at (-0.6,-0.3) {};

\fill (anv) circle (0.06);
\fill (cnv) circle (0.06);
\fill (bnv) circle (0.06);

\draw (anv)--(cnv); 
\draw (cnv)--(bnv);

\node (n1) at (0.7,-0.9) {};
\node (n2) at (-0.7,-0.9) {};
\node (n3) at (0.7, 0.9) {};
\node (n4) at (-0.7,0.9) {};

\path[->] (n2) edge[bend right] (n1);
\path[->] (n3) edge[bend right] (n4);

\draw[violet, rounded corners ] (-1.7+6.6,-0.9)
rectangle (-0.1+6.6, -0.1) {};
\draw[violet, rounded corners] (-1.7+6.6,0.9) rectangle (-0.1+6.6,0.1) {};
\draw[thick, brown, rounded corners] (0.1+3,-0.9) rectangle (1.7+3, 0.9) {};

\filldraw[gray,thin] (0.4+3,0.7) circle (0.05);
\filldraw[gray,thin] (0.9+3,0.5) circle (0.05);
\filldraw[gray,thin] (1.3+3,0.2) circle (0.05);
\filldraw[gray,thin] (0.7+3,0.0) circle (0.05);
\filldraw[gray,thin] (1.1+3,-0.3) circle (0.05);
\filldraw[gray,thin] (0.5+3,-0.5) circle (0.05);
\filldraw[gray,thin] (1.5+3,-0.6) circle (0.05);

\node (aho) at (5.4,0.5) {};
\node (a1ho) at (5.5,0.6) {};
\node (a3ho) at (5.35,0.35) {};

\node (bho) at (5.9, 0.5) {};

%
\draw (aho) circle (0.3);
\fill[gray, thin] (a1ho) circle (0.06);
\fill[gray, thin] (a3ho) circle (0.06);
\fill (bho) circle (0.06);


\node (ahn) at (5.3,-0.7) {};
\node (chn) at (5.65,-0.3) {};
\node (bhn) at (6,-0.7) {};

\fill (ahn) circle (0.06);
\fill (chn) circle (0.06);
\fill (bhn) circle (0.06);

\draw (ahn)--(chn); 
\draw (chn)--(bhn);

\node (n1) at (6.5,-0.3) {};
\node (n2) at (6.5,0.3) {};
\node (n3) at (5.8, 0.9) {};
\node (n4) at (3.8 ,0.9) {};

\path[->] (n1) edge[bend right] (n2);
\path[->] (n3) edge[bend right] (n4);

\end{tikzpicture}


\caption{$T_1$ with refinement $\leq_1$,
  \hskip 1.5cm $T_2$ with refinement $\leq_2$}
\label{Fig-TT}
\end{center}
\end{figure}
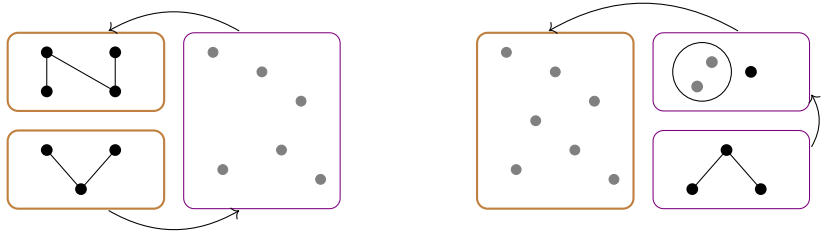

\begin{proposition} Let $O_1, B_1, O_2, B_2$ fulfill the Basic
  situation \ref{subsec:basic-HApairs}.
Let $T_1$ and $T_2$ be two total orders such that $T_1^\circ = O_1$
and $T_2^\circ = O_2$ (i.e. the bubbles of these total orders are the bubbles
of the partition orders). Let $\leq_1$ be a refinement of $T_1$ along
$\fB_1$ and $\leq_2$ a refinement of $T_2$ along $\fB_2$. 

Then the pair $(\geq_1,\geq_2)$ is of type $\cc$
and every pair of type $\cc$ arises in this way.
\end{proposition}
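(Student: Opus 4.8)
The plan is to reduce the statement to the characterisation of type $\cc$ in Proposition~\ref{pro:pairs-cc} (throughout I write $\leq_i$ for the preorder $\geq_i$ of the statement): a pair $(\leq_1,\leq_2)$ on $X$ is of type $\cc$ exactly when $x\nrel{1}y$ implies $x\circ_2 y$ and $x\nrel{2}y$ implies $x\circ_1 y$. The proof then splits into two halves: checking these two implications for a pair assembled as in the Basic situation, and conversely reconstructing the data $(T_1,T_2,\fB_1,\fB_2)$ from an arbitrary type $\cc$ pair.

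For the first half I would suppose $x\nrel{1}y$. Since $T_1$ is a total preorder and $\leq_1$ is a refinement of it, part b.\ of Definition~\ref{def:refine-refine} forces $x$ and $y$ into one bubble $\beta$ of $T_1$, for otherwise they would be $\leq_1$-comparable. As $x\nrel{1}y$, this $\beta$ is not a bubble of $\leq_1$, hence $\beta\in\fB_1$, and by the Basic situation $\beta\subseteq\gamma$ for some bubble $\gamma\in O_2$. Next I would note $\gamma\notin\fB_2$: otherwise $\gamma$ lies inside a bubble $\alpha\in O_1$, and $\beta\subseteq\gamma\subseteq\alpha$ with $\alpha,\beta$ blocks of the partition $O_1$ gives $\beta=\gamma=\alpha$, so $\beta=\gamma$ would be a common bubble of $\fB_1$ and $\fB_2$, against disjointness. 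Hence $\gamma$ is a bubble of $\leq_2$, and $x,y\in\beta\subseteq\gamma$ gives $x\circ_2 y$. The implication $x\nrel{2}y\Rightarrow x\circ_1 y$ is the mirror argument, so Proposition~\ref{pro:pairs-cc} gives that the pair is of type $\cc$.

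For the converse I would start from a type $\cc$ pair $(\leq_1,\leq_2)$, let $T_i$ be the unique minimal total preorder refined by $\leq_i$ (it exists by the lemma just before Lemma~\ref{lem:prelattice-inc}), set $O_i:=T_i^{\bob}$, and let $\fB_i$ be the set of bubbles of $T_i$ that are not bubbles of $\leq_i$. Then $\leq_i$ is by construction a refinement of $T_i$ along $\fB_i$, and $T_i^{\bob}=O_i$. The engine of this half is the claim: for each $\beta\in\fB_1$ all elements of $\beta$ are $\circ_2$-equivalent. Indeed, Lemma~\ref{lem:prelattice-inc} applies to $\beta$ (a bubble of $T_1$ that is not a bubble of $\leq_1$) and tells us that any two elements of $\beta$ are joined by a chain of $\nrel{1}$-steps inside $\beta$; condition 1 of Proposition~\ref{pro:pairs-cc} turns each step into a $\circ_2$-relation, and transitivity of $\circ_2$ finishes it. Symmetrically, each $\beta\in\fB_2$ has all its elements $\circ_1$-equivalent. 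From these two facts the three clauses of the Basic situation follow: every $\beta\in\fB_1$ sits in a single $\leq_2$-bubble, hence in a single bubble of $O_2$, and symmetrically for $\fB_2$; and if some $\beta$ lay in $\fB_1\cap\fB_2$, then being contained in a single $\leq_2$-bubble which lies in a single $T_2$-bubble, while being itself a $T_2$-bubble, it would coincide with that $\leq_2$-bubble and thus be a bubble of $\leq_2$, contradicting $\beta\in\fB_2$. This exhibits $(\leq_1,\leq_2)$ in the asserted form.

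I expect the converse to be the harder half, and inside it the verification of the disjointness and nesting clauses of the Basic situation; the decisive input is Lemma~\ref{lem:prelattice-inc}, which upgrades the fact that $\beta$ is genuinely refined to the fact that $\beta$ is connected under $\nrel{1}$, after which condition 1 (resp.\ condition 2) of Proposition~\ref{pro:pairs-cc} collapses the elements of $\beta$ into a single bubble on the other side. One should also keep in mind that here $T_1,T_2$ are total \emph{preorders}, as in Figure~\ref{Fig-TT}, so that $T_i^{\bob}$ may be any partition order.
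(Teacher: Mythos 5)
Your proof is correct and follows essentially the same route as the paper: the forward direction reduces to checking the two conditions of Proposition~\ref{pro:pairs-cc}, and the converse takes the minimal total preorders, applies Lemma~\ref{lem:prelattice-inc} to a refined bubble, and uses Proposition~\ref{pro:pairs-cc} to place it inside a bubble on the other side. You in fact supply details the paper leaves implicit — the forward direction genuinely needs the disjointness of $\fB_1$ and $\fB_2$ (your $\beta=\gamma=\alpha$ argument), and the converse should verify that disjointness holds for the reconstructed data, which the paper's proof does not spell out — so your write-up is a strictly more complete version of the same argument.
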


\begin{proof} $\Rightarrow$
  If it is constructed as above, then clearly the conditions 1 and 2 of
  Proposition \ref{pro:pairs-cc} hold.

  Conversely suppose the pair is of type $\cc$. Let $T_1$ be the
  total preorder hull of $\geq_1$, and $T_2$ the total preorder
  hull of $\geq_2$. Consider a bubble $B$ of $T_1$ which has been
  refined. By Lemma \ref{lem:prelattice-inc} the transitive closure
  of the incomparability relation $\nrel{}$
  has $B$ as the single equivalence class.
  Then by Proposition \ref{pro:pairs-cc} the bubble $B$ is contained in
  a bubble of $T_2$. Thus every bubble of $T_1$ which has been refined
  is contained in a bubble of $T_2$. Similarly every bubble of $T_2$
  which has been refined is contained in a bubble of $T_1$.
\end{proof}

\begin{example}
  Let $\sS$ be the species consisting of pairs $(T_1,T_2)$ of total
  preorders on $X$. This is the subspecies of the $\cc$-species where you avoid
  incomparable pairs of elements of $X$.
  Let $T_1$ have the sequence of bubbles $B_1, \ldots, B_n$ and
let $T_2$ have the sequence of  bubbles $C_1, \ldots, C_m$. Let
$a_{ij} = |B_i \cap C_j|$. After applying the Fock functor, we can
represent the pair $(T_1,T_2)$ by an $n \times m$ matrix $A$ with
entries $a_{ij}$, natural numbers $\geq 0$,
where $\sum a_{ij}  = |X|$. Conversely any such
matrix gives rise to a pair of preorders $(T_1,T_2)$ on $X$, up
to an automorphism of $X$.
  \end{example}

  \begin{example} \label{eks:types-TT} {\bf WQSymm.}
    Consider the subspecies where
$T_1$ is a total order and $T_2$ is a total preorder. The preorder
  $T_2$ naturally identifies as a surjection
  $X \overset{p}{\twoheadrightarrow} [k]$
with $x \leq_{T_2} y$ iff $p(x) \leq p(y)$.
The pair $(T_1, T_2)$ then corresponds to a "packed word", i.e.
a sequence $a_1, a_2, \ldots, a_n$ of natural numbers such that if
$p$ appears, and $1 \leq q \leq p$, then $q$ also appears in the sequence.
This gives the Hopf algebra of packed words, ${\bf WQSymm}$
  see \cite{ThNoW}.
\end{example}

\subsection{Type $\nc$}
If a bubble in a preorder is also a component of the preorder,
we call it a bubble component.
Consider again the basic situation.

\begin{proposition} \label{pro:types-to-nc}
 Let $O_1, B_1, O_2, B_2$ fulfill the Basic
  situation \ref{subsec:basic-HApairs}.
Let $T_2$ be a total order such that $T_2^\circ = O_2$. Let
$\leq_1$ be a refinement of $O_1$ along $\fB_1$ and $\leq_2$ a
refinement of $T_2$ along $\fB_2$. 
Then the pair is of type $\nc$ and every pair of type $\nc$ arises
in this way.
\end{proposition}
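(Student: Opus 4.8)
The plan is to prove both implications by translating to the characterisation of type $\nc$ in Proposition~\ref{pro:pairs-nc} (conditions~1 and~2), in exact parallel with the preceding proposition for type $\cc$, which was reduced to Proposition~\ref{pro:pairs-cc}. The crucial structural point is that the asymmetry in the definition of type $\nc$ (always $A \nrel{1} D$ but $A <_2 D$) forces $\leq_1$ to refine only the \emph{partition} order of its connected components, so that incomparable components are never merged, whereas $\leq_2$ refines a genuine total preorder. Accordingly, in the converse direction I will take $T_2$ to be the unique minimal total preorder refining $\geq_2$ (from the earlier lemma), put $O_2 := T_2^{\circ}$, and let $O_1 := \geq_1 \vee \geq_1^{\op}$ be the partition order whose bubbles are the connected components of $\geq_1$; I will let $\fB_2$ be the set of $T_2$-bubbles that are not bubbles of $\geq_2$, and $\fB_1$ the set of connected components of $\geq_1$ that are not single bubbles of $\geq_1$.

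For the direction ``built as above $\Rightarrow$ type $\nc$'' I would first record what the two refinements give: since $\leq_1$ refines the partition order $O_1$, elements in distinct $O_1$-bubbles are $\nrel{1}$, and an $O_1$-bubble not in $\fB_1$ is a single $\leq_1$-bubble; since $\leq_2$ refines the total preorder $T_2$, elements in distinct $T_2$-bubbles are $\leq_2$-comparable (hence never $\nrel{2}$), and a $T_2$-bubble not in $\fB_2$ is a single $\leq_2$-bubble. Then I check condition~1: if $x \nrel{2} y$ then $x,y$ lie in one $T_2$-bubble $b$, which must be in $\fB_2$; this $b$ lies inside an $O_1$-bubble $B$, and $B \notin \fB_1$ — for otherwise $B$ would lie inside an $O_2$-bubble $b'$, and $b \subseteq B \subseteq b'$ with $b,b'$ blocks of the partition $O_2$ forces $b = b' = B \in \fB_1 \cap \fB_2 = \emptyset$; hence $B$ is a single $\leq_1$-bubble and $x \circ_1 y$. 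Condition~2 is the mirror image: $x >_1 y$ places $x,y$ in a common $O_1$-bubble $B$ which, being not a single $\leq_1$-bubble, lies in $\fB_1$; then $B$ lies in a $T_2$-bubble $b$ with $b \notin \fB_2$ (else $b = B \in \fB_1 \cap \fB_2$), so $b$ is a single $\leq_2$-bubble and $x \circ_2 y$. Proposition~\ref{pro:pairs-nc} then gives type $\nc$.

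For the converse, with the data named above, $\geq_1$ is a refinement of $O_1$ along $\fB_1$, $\geq_2$ is a refinement of $T_2$ along $\fB_2$, and $O_2 = T_2^{\circ}$ straight from the definitions, so only the three conditions of the basic situation need verification. For $\fB_1 \cap \fB_2 = \emptyset$ I argue by contradiction: if $S$ lay in both, then $S$ is a $T_2$-bubble that is not a bubble of $\geq_2$, so Lemma~\ref{lem:prelattice-inc} applies and the transitive closure of $\nrel{2}$ on $S$ is all of $S$; chaining condition~1 of Proposition~\ref{pro:pairs-nc} along such a $\nrel{2}$-chain and using that $\circ_1$ is an equivalence relation, all of $S$ lands in one $\geq_1$-bubble, contradicting $S \in \fB_1$. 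For ``every element of $\fB_1$ lies in an $O_2$-bubble'': a component $K$ with more than one $\geq_1$-bubble is connected, so the graph on the $\geq_1$-bubbles of $K$ with an edge between any two $\geq_1$-comparable bubbles is connected, and condition~2 of Proposition~\ref{pro:pairs-nc} shows two $\geq_1$-comparable bubbles lie in a common $\geq_2$-bubble; walking the graph places all of $K$ in a single $T_2$-bubble. For ``every element of $\fB_2$ lies in an $O_1$-bubble'': dually, for a $T_2$-bubble $b$ that is not a bubble of $\geq_2$, Lemma~\ref{lem:prelattice-inc} joins any two elements of $b$ by a $\nrel{2}$-chain, and condition~1 puts consecutive elements in a common $\geq_1$-bubble, so $b$ lies in one component of $\geq_1$.

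The part I expect to be the real obstacle is conceptual rather than computational: correctly pinning down $O_1$. Unlike the $\cc$ case, one must \emph{not} use the minimal total preorder refining $\geq_1$ (that would collapse incomparable components and destroy the ``$A \nrel{1} D$'' half of type $\nc$); the right object is $\geq_1 \vee \geq_1^{\op}$, the partition order of connected components. Once that is settled, the only genuinely delicate step is the disjointness $\fB_1 \cap \fB_2 = \emptyset$, where Lemma~\ref{lem:prelattice-inc} does the essential work by converting ``$S$ is a refined bubble of the minimal total preorder of $\geq_2$'' into $\nrel{2}$-connectivity within $S$; everything else follows directly from the definitions of refinement along $\fB$ and of the partition orders $P^{\circ}$ and $P^{\bullet}$.
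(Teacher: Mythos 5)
Your proposal is correct and follows essentially the same route as the paper's proof: the forward direction by checking the two conditions of Proposition~\ref{pro:pairs-nc}, and the converse by taking $T_2$ to be the minimal total preorder refining $\geq_2$, taking $O_1=(\geq_1)^\bullet$, and invoking Lemma~\ref{lem:prelattice-inc} together with Proposition~\ref{pro:pairs-nc} to place each refined bubble inside a bubble of the other order. You in fact supply more detail than the paper does, notably the explicit verification that $\fB_1\cap\fB_2=\emptyset$, which the paper's (very terse) proof leaves implicit.
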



  \begin{example}
    In Figure \ref{Fig-OT} we start from a partition order $O_1$ and
    a total preorder $T_2$.
  The brown boxes represent the bubbles of these two orders, and
  how these bubbles are ordered in $T_2$ is indicated by arrows.
  In $O_1$ the bubbles have sizes $3,6,4$. In $T_2$ the sizes of bubbles
  are successively $3,3,7 $.
  The partial order $\leq_1$ is a refinement of $O_1$, it
  refines the left bubbles, and $\leq_2$ a refinement
  of $T_2$, it refines the right bubbles.
\end{example}

  \begin{figure}
\begin{center}
\begin{tikzpicture}[inner sep=1pt, scale=1.3]
\draw [help lines, white] (-2,-2) grid (4,2);
\draw[thick, brown, rounded corners ] (-1.7,-0.9) rectangle (-0.1, -0.1) {};
\draw[thick, brown, rounded corners] (-1.7,0.9) rectangle (-0.1,0.1) {};
\draw[violet, rounded corners] (0.1,-0.9) rectangle (1.7, 0.9) {};

\filldraw[gray,thin] (0.4,0.7) circle (0.05);
\filldraw[gray,thin] (0.9,0.5) circle (0.05);
\filldraw[gray,thin] (1.3,0.2) circle (0.05);
\filldraw[gray,thin] (1.1,-0.3) circle (0.05);
\filldraw[gray,thin] (0.5,-0.5) circle (0.05);
\filldraw[gray,thin] (1.5,-0.6) circle (0.05);

\node (aa) at (-1.3,0.3) {};
\node (cc) at (-1.3,0.7) {};
\node (bb) at (-0.6,0.3) {};
\node (dd) at (-0.6, 0.7) {};

\fill (aa) circle (0.06);
\fill (cc) circle (0.06);
\fill (bb) circle (0.06);
\fill (dd) circle (0.06);

\draw (aa)--(cc); 
\draw (cc)--(bb);
\draw (bb)--(dd);

\node (anv) at (-1.3,-0.3) {};
\node (cnv) at (-0.95,-0.7) {};
\node (bnv) at (-0.6,-0.3) {};

\fill (anv) circle (0.06);
\fill (cnv) circle (0.06);
\fill (bnv) circle (0.06);

\draw (anv)--(cnv); 
\draw (cnv)--(bnv);

\node (n1) at (0.7,-0.9) {};
\node (n2) at (-0.7,-0.9) {};
\node (n3) at (0.7, 0.9) {};
\node (n4) at (-0.7,0.9) {};


\draw[violet, rounded corners ] (-1.7+6.6,-0.9)
rectangle (-0.1+6.6, -0.1) {};
\draw[violet, rounded corners] (-1.7+6.6,0.9) rectangle (-0.1+6.6,0.1) {};
\draw[thick, brown, rounded corners] (0.1+3,-0.9) rectangle (1.7+3, 0.9) {};

\filldraw[gray,thin] (0.4+3,0.7) circle (0.05);
\filldraw[gray,thin] (0.9+3,0.5) circle (0.05);
\filldraw[gray,thin] (1.3+3,0.2) circle (0.05);
\filldraw[gray,thin] (0.7+3,0.0) circle (0.05);
\filldraw[gray,thin] (1.1+3,-0.3) circle (0.05);
\filldraw[gray,thin] (0.5+3,-0.5) circle (0.05);
\filldraw[gray,thin] (1.5+3,-0.6) circle (0.05);

\node (aho) at (5.4,0.5) {};
\node (bho) at (5.9, 0.5) {};

%
\draw (aho) circle (0.3);
\fill (bho) circle (0.06);

\node (a1ho) at (5.5,0.6) {};
\node (a3ho) at (5.35,0.35) {};

\fill[gray, thin] (a1ho) circle (0.05);
\fill[gray, thin] (a3ho) circle (0.05);

\node (ahn) at (5.3,-0.7) {};
\node (chn) at (5.65,-0.3) {};
\node (bhn) at (6,-0.7) {};

\fill (ahn) circle (0.06);
\fill (chn) circle (0.06);
\fill (bhn) circle (0.06);

\draw (ahn)--(chn); 
\draw (chn)--(bhn);

\node (n1) at (6.5,-0.3) {};
\node (n2) at (6.5,0.3) {};
\node (n3) at (5.8, 0.9) {};
\node (n4) at (3.8 ,0.9) {};

\path[->] (n1) edge[bend right] (n2);
\path[->] (n3) edge[bend right] (n4);

\end{tikzpicture}


\caption{$O_1$ with refinement, \hskip 2cm $T_2$ with refinement}
\label{Fig-OT}
\end{center}
\end{figure}

\begin{proof}
  It the pair is constructed above, the conditions of Proposition
  \ref{pro:pairs-nc} are fulfilled.

  Conversely, if it is of type $\nc$,
  let $T_2$ be the total preorder hull of $\geq_2$ and
  let $O_1$ be $(\geq_1)^\bullet$, the components of $\geq_1$. Then if
  $B$ is a bubble of $T_2$ which has been refined by $\geq_2$,
  by Lemma \ref{lem:prelattice-inc} and
Proposition \ref{pro:pairs-nc}.1, $B$
  is contained in a bubble of $T_1$.

  If $B$ is a bubble of $O_1$ which is  refined by $\leq_1$, by Proposition
  \ref{pro:pairs-nc}.2 $B$ is contained in a bubble of $T_2$.
\end{proof}

\begin{example}
  Let $\sS$ be the species consisting of pairs $(O_1,T_2)$ of a
  partition 
  preorder and a total preorder  on $X$.
  This is the subspecies of the $\nc$-species where you avoid
  strictly ordered elements in the first factor,
  and incomparable elements in the second factor.
  As in Example \ref{eks:types-TT}
  we may similarly construct the matrix $A$ but $A$ is only
determine up to permuting the rows. Applying the Fock functor the
image of $(O_1,T_2)$ is an equivalence class of such matrices, with matrices
equivalent if they are obtained by permuting rows.
  \end{example}

  \subsection{Type $\nn$}
  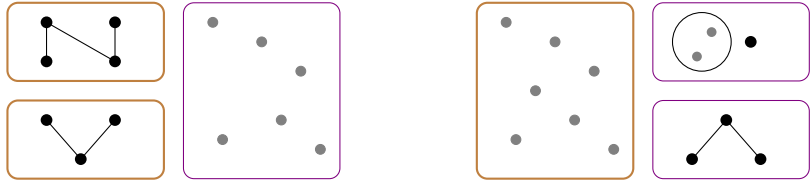
\begin{figure}
\begin{center}
\begin{tikzpicture}[inner sep=1pt, scale=1.3]
\draw [help lines, white] (-2,-2) grid (4,2);
\draw[thick, brown, rounded corners ] (-1.7,-0.9) rectangle (-0.1, -0.1) {};
\draw[thick, brown, rounded corners] (-1.7,0.9) rectangle (-0.1,0.1) {};
\draw[violet, rounded corners] (0.1,-0.9) rectangle (1.7, 0.9) {};

\filldraw[gray,thin] (0.4,0.7) circle (0.05);
\filldraw[gray,thin] (0.9,0.5) circle (0.05);
\filldraw[gray,thin] (1.3,0.2) circle (0.05);
\filldraw[gray,thin] (1.1,-0.3) circle (0.05);
\filldraw[gray,thin] (0.5,-0.5) circle (0.05);
\filldraw[gray,thin] (1.5,-0.6) circle (0.05);

\node (aa) at (-1.3,0.3) {};
\node (cc) at (-1.3,0.7) {};
\node (bb) at (-0.6,0.3) {};
\node (dd) at (-0.6, 0.7) {};

\fill (aa) circle (0.06);
\fill (cc) circle (0.06);
\fill (bb) circle (0.06);
\fill (dd) circle (0.06);

\draw (aa)--(cc); 
\draw (cc)--(bb);
\draw (bb)--(dd);

\node (anv) at (-1.3,-0.3) {};
\node (cnv) at (-0.95,-0.7) {};
\node (bnv) at (-0.6,-0.3) {};

\fill (anv) circle (0.06);
\fill (cnv) circle (0.06);
\fill (bnv) circle (0.06);

\draw (anv)--(cnv); 
\draw (cnv)--(bnv);

\node (n1) at (0.7,-0.9) {};
\node (n2) at (-0.7,-0.9) {};
\node (n3) at (0.7, 0.9) {};
\node (n4) at (-0.7,0.9) {};


\draw[violet, rounded corners ] (-1.7+6.6,-0.9)
rectangle (-0.1+6.6, -0.1) {};
\draw[violet, rounded corners] (-1.7+6.6,0.9) rectangle (-0.1+6.6,0.1) {};
\draw[thick, brown, rounded corners] (0.1+3,-0.9) rectangle (1.7+3, 0.9) {};

\filldraw[gray,thin] (0.4+3,0.7) circle (0.05);
\filldraw[gray,thin] (0.9+3,0.5) circle (0.05);
\filldraw[gray,thin] (1.3+3,0.2) circle (0.05);
\filldraw[gray,thin] (0.7+3,0.0) circle (0.05);
\filldraw[gray,thin] (1.1+3,-0.3) circle (0.05);
\filldraw[gray,thin] (0.5+3,-0.5) circle (0.05);
\filldraw[gray,thin] (1.5+3,-0.6) circle (0.05);

\node (aho) at (5.4,0.5) {};
\node (bho) at (5.9, 0.5) {};

%
\draw (aho) circle (0.3);
\fill (bho) circle (0.06);

\node (a1ho) at (5.5,0.6) {};
\node (a3ho) at (5.35,0.35) {};

\fill[gray, thin] (a1ho) circle (0.05);
\fill[gray, thin] (a3ho) circle (0.05);

\node (ahn) at (5.3,-0.7) {};
\node (chn) at (5.65,-0.3) {};
\node (bhn) at (6,-0.7) {};

\fill (ahn) circle (0.06);
\fill (chn) circle (0.06);
\fill (bhn) circle (0.06);

\draw (ahn)--(chn); 
\draw (chn)--(bhn);

\node (n1) at (6.5,-0.3) {};
\node (n2) at (6.5,0.3) {};
\node (n3) at (5.8, 0.9) {};
\node (n4) at (3.8 ,0.9) {};
\end{tikzpicture}

\caption{$O_1$ with refinement, \hskip 2cm $O_2$ with refinement}
\label{Fig-OO}
\end{center}
\end{figure}


\begin{proposition}
 Let $O_1, B_1, O_2, B_2$ fulfill the Basic
  situation \ref{subsec:basic-HApairs}.
Let $\leq_1$ be a refinement of $O_1$ along $\fB_1$ and
$\leq_2$ a refinement of $O_2$ along $\fB_2$. 
  Then the pair is of type $\nn$ and conversely any pair
  of type $\nn$ may be constructed as above.
\end{proposition}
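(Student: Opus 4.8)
The plan is to argue exactly along the lines of the $\cc$ and $\nc$ cases. By Proposition~\ref{pro:pairs-nn}, ``type $\nn$'' is equivalent to the two local conditions (i) $x >_1 y \Rightarrow x \circ_2 y$ and (ii) $x >_2 y \Rightarrow x \circ_1 y$. I will also use the elementary fact that any preorder $P$ is a refinement (Definition~\ref{def:refine-refine}) of the partition order $P^\bullet$ whose bubbles are the components of $P$: a refinement of a partition order can only have comparabilities inside its bubbles, and that is precisely the situation of $P$ relative to $P^\bullet$. Conversely, if $\leq_i$ is a refinement of $O_i$ along $\fB_i$ then, since $O_i$ is a partition order, the bubbles of $O_i$ are exactly the components of $\leq_i$, so $O_i = (\leq_i)^\bullet$ and $\fB_i$ consists of the components that $\leq_i$ genuinely refines.

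For the direction ``constructed $\Rightarrow$ type $\nn$'', suppose $\leq_1$ refines $O_1$ along $\fB_1$ and $\leq_2$ refines $O_2$ along $\fB_2$, with the basic situation in force. If $x >_1 y$, then $x$ and $y$ lie in a common bubble of $O_1$ (comparabilities of $\leq_1$ stay inside $O_1$-bubbles) which is genuinely refined, hence lies in $\fB_1$; call it $B$. By the basic situation $B$ is contained in a bubble $B'$ of $O_2$. If $B' \notin \fB_2$ then $B'$ is a bubble of $\leq_2$, so $x \circ_2 y$. If $B' \in \fB_2$, then $B'$ lies in a bubble of $O_1$; since that bubble contains $B$ and the $O_1$-bubbles partition $X$, it equals $B$, so $B' = B \in \fB_1 \cap \fB_2$, contradicting their disjointness. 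This proves (i), and (ii) is symmetric.

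For the converse, start from a pair $(\geq_1,\geq_2)$ of type $\nn$, set $O_i = (\geq_i)^\bullet$, and let $\fB_i$ consist of those components of $\geq_i$ that are not single bubbles, so $\leq_i$ is a refinement of $O_i$ along $\fB_i$. The three conditions of the basic situation then all reduce to one claim: \emph{a refined component $B$ of $\geq_1$ lies inside a single bubble of $\geq_2$} (and symmetrically with the indices swapped). To prove this, quotient $\geq_1|_B$ by its bubbles; the result is a connected poset with at least two elements, so every bubble $\beta$ of $\geq_1|_B$ is strictly $\geq_1$-comparable to some other bubble $\beta'$. Applying condition (i) to the strict comparisons between $\beta$ and $\beta'$ places $\beta \cup \beta'$ in a single bubble of $\geq_2$; running this along any path in the connected quotient poset, where consecutive bubbles are strictly comparable, shows that all bubbles of $\geq_1|_B$, hence all of $B$, lie in one bubble of $\geq_2$. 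Granting the claim: condition~2 of the basic situation (each $B \in \fB_1$ inside a bubble of $O_2$) is immediate, condition~3 follows by symmetry, and condition~1 (disjointness of $\fB_1$ and $\fB_2$) holds because a common element of $\fB_1 \cap \fB_2$ would be a refined component of $\geq_2$ that nevertheless sits inside a single $\geq_2$-bubble, which is impossible.

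The only real work is the italicised claim, and its proof is the propagation-along-paths argument sketched above; everything else is bookkeeping with the inclusions and disjointness built into the basic situation, entirely parallel to the treatment of the $\cc$ and $\nc$ cases.
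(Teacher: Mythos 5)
Your proof is correct and takes essentially the same route as the paper: the forward direction is a direct verification of the two conditions in Proposition~\ref{pro:pairs-nn}, and the converse sets $O_i = (\geq_i)^\bullet$ and checks the basic situation. The paper's own proof is a two-line sketch that leaves the key point — that a refined component of $\geq_1$ must sit inside a single bubble of $\geq_2$ — implicit, and your propagation-along-paths argument through the connected quotient poset supplies exactly that missing detail.
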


\begin{proof}
  If it is constructed as above, it is of type $\nn$ by Proposition
  \ref{pro:pairs-nn}. Conversely if it
  is of type $\nn$, let $O_1 = ({\geq_1})^\bullet$, the partition preorder
  where the underlying sets of its bubbles are the underlying sets of the
  components of $\geq_1$, and similarly $O_2 = ({\geq_2})^\bullet$. By Proposition
  \ref{pro:pairs-nn} it is given by the Basic situation,
  \ref{subsec:basic-HApairs}.
\end{proof}


\begin{example}
  In Figure \ref{Fig-OO} the boxes represent bubbles of two partition orders
  $O_1$ and $O_2$.
 The brown boxes represent the bubbles of these two orders.
  In $O_1$ the bubbles have sizes $3,6,4$. In $O_2$ the number of bubbles
  are $3, 3, 7$.
      The partial order $\leq_1$ is a refinement of $O_1$ and $\leq_2$ a refinement
  of $O_2$.
\end{example}

\begin{example}
  Let $\sS$ be the species consisting of pairs $(O_1,O_2)$ of
  partition preorders.
  This is the subspecies of the $\nn$-species where you avoid
  strictly ordered elements in the two factors.
  As in Example \ref{eks:types-TT}
  we may similarly construct the matrix $A$
but matrices being equivalent
if they are obtained by permuting both rows and columns.
Applying the Fock functor the
image of $(O_1,O_2)$ is an equivalence class of such matrices, with matrices
equivalent if they are obtained by permuting rows and columns.
  \end{example}

\bibliographystyle{amsplain}
\bibliography{biblio}

@article{Fo-PP,
  title={Plane posets, special posets, and permutations},
  author={Foissy, Lo{\"\i}c},
  journal={Advances in Mathematics},
  volume={240},
  pages={24--60},
  year={2013},
  publisher={Elsevier}
}

@article{AgSo,
  title={{Structure of the Malvenuto--Reutenauer Hopf algebra of permutations}},
  author={Aguiar, Marcelo and Sottile, Frank},
  journal={Advances in Mathematics},
  volume={191},
  number={2},
  pages={225--275},
  year={2005},
  publisher={El
  sevier}
}

@article{AgSo-LR,
  title={{Structure of the Loday--Ronco Hopf algebra of trees}},
  author={Aguiar, Marcelo and Sottile, Frank},
  journal={Journal of Algebra},
  volume={295},
  number={2},
  pages={473--511},
  year={2006},
  publisher={Elsevier}
}

@article{MR,
  title={{Duality between quasi-symmetrical functions and the Solomon descent algebra}},
  author={Malvenuto, Clauda and Reutenauer, Christophe},
  journal={Journal of Algebra},
  volume={177},
  number={3},
  pages={967--982},
  year={1995},
  publisher={Elsevier}
}

@book{AgMa,
  title={{Monoidal functors, species and Hopf algebras}},
  author={Aguiar, Marcelo and Mahajan, Swapneel Arvind},
  volume={29},
  year={2010},
  publisher={American Mathematical Society Providence, RI}
}

@article{AM2,
  title={Hopf monoids in the category of species},
  author={Aguiar, Marcelo and Mahajan, Swapneel},
  journal={Hopf algebras and tensor categories},
  volume={585},
  pages={17--124},
  year={2013}
}

@inproceedings{ThNoW,
  title={Polynomial realizations of some trialgebras},
  author={Novelli, Jean-Christophe and Thibon, Jean-Yves},
  booktitle={18th Formal Power Series and Algebraic Combinatorics (FPSAC'06)},
  number={1},
  pages={243--254},
  year={2006}
}

@inproceedings{ThNo1,
  title={{A Hopf Algebra of Parking Functions}},
  author={Novelli, J-C and Thibon, J-Y},
  booktitle={FPSAC proceedings 2004, Actes SFCA 2004, Vancouver, Canada},
  pages={215},
  year={2004}
}

@article{ThNo2,
  title={Hopf algebras and dendriform structures arising from parking functions},
  author={Novelli, Jean-Christophe and Thibon, Jean-Yves},
  journal={Fundamenta Mathematicae},
  volume={3},
  number={193},
  pages={189--241},
  year={2007}
}

@article{Sch-Inc,
  title={{Incidence Hopf algebras}},
  author={Schmitt, William R},
  journal={Journal of Pure and Applied Algebra},
  volume={96},
  number={3},
  pages={299--330},
  year={1994},
  publisher={Elsevier}
}

@article{Sch-CoHa,
  title={Hopf algebras of combinatorial structures},
  author={Schmitt, William R},
  journal={Canadian Journal of Mathematics},
  volumebib={45},
  number={2},
  pages={412--428},
  year={1993},
  publisher={Cambridge University Press}
}

@article{FaFoMa,
  title={{The Hopf algebra of finite topologies and mould composition}},
  author={Fauvet, Fr{\'e}d{\'e}ric and Foissy, Lo{\"\i}c and Manchon, Dominique},
  journal={Annales de l'Institut Fourier},
  volume={67},
  number={3},
  pages={911--945},
  year={2017}
}

@article{AgAr,
  title={Hopf monoids and generalized permutahedra},
  author={Aguiar, Marcelo and Ardila, Federico},
  journal={arXiv preprint arXiv:1709.07504},
  year={2017}
}

@incollection {LR-CHA,
    AUTHOR = {Loday, J-L. and Ronco, M.},
     TITLE = {Combinatorial {H}opf algebras},
 BOOKTITLE = {Quanta of maths},
    SERIES = {Clay Math. Proc.},
    VOLUME = {11},
     PAGES = {347--383},
 PUBLISHER = {Amer. Math. Soc., Providence, RI},
      YEAR = {2010},
}

@article {CEMM,
    AUTHOR = {Curry, C. and Ebrahimi-Fard, K. and Manchon, D. and
              Munthe-Kaas, H. Z.},
     TITLE = {Planarly branched rough paths and rough differential equations
              on homogeneous spaces},
   JOURNAL = {J. Differential Equations},
  FJOURNAL = {Journal of Differential Equations},
    VOLUME = {269},
      YEAR = {2020},
    NUMBER = {11},
     PAGES = {9740--9782},     
}

@article {ABS,
    AUTHOR = {Aguiar, Marcelo and Bergeron, Nantel and Sottile, Frank},
     TITLE = {Combinatorial {H}opf algebras and generalized
              {D}ehn-{S}ommerville relations},
   JOURNAL = {Compos. Math.},
  FJOURNAL = {Compositio Mathematica},
    VOLUME = {142},
      YEAR = {2006},
    NUMBER = {1},
     PAGES = {1--30},
}

@article{BV,
  title={Hopf algebras of parking functions and decorated planar trees},
  author={Bergeron, Nantel and D'Le{\'o}n, Rafael S Gonz{\'a}lez and Li, Shu Xiao and Pang, CY Amy and Vargas, Yannic},
  journal={Advances in Applied Mathematics},
  volume={143},
  pages={102436},
  year={2023},
  publisher={Elsevier}
}

@article{GS78,
  title={Stirling polynomials},
  author={Gessel, Ira and Stanley, Richard P},
  journal={Journal of Combinatorial Theory, Series A},
  volume={24},
  number={1},
  pages={24--33},
  year={1978},
  publisher={Elsevier}
}

@article{FMP,
  title={A theory of pictures for quasi-posets},
  author={Foissy, Lo{\"\i}c and Malvenuto, Claudia and Patras, Fr{\'e}d{\'e}ric},
  journal={Journal of Algebra},
  volume={477},
  pages={496--515},
  year={2017},
  publisher={Elsevier}
}

@article{MR-DP,
  title={{A self paired Hopf algebra on double posets and a Littlewood--Richardson rule}},
  author={Malvenuto, Claudia and Reutenauer, Christophe},
  journal={Journal of Combinatorial Theory, Series A},
  volume={118},
  number={4},
  pages={1322--1333},
  year={2011},
  publisher={Elsevier}
}

@article{Zel,
  title={{A generalization of the Littlewood-Richardson rule and the Robinson-Schensted-Knuth correspondence}},
  author={Zelevinsky, Andrey V},
  journal={Journal of Algebra},
  volume={69},
  number={1},
  pages={82--94},
  year={1981},
  publisher={Academic Press}
}

@article{Pe,
  title={{Pattern Hopf algebras}},
  author={Penaguiao, Raul},
  journal={Annals of Combinatorics},
  volume={26},
  number={2},
  pages={405--451},
  year={2022},
  publisher={Springer}
}

@article{Va,
  title={Hopf algebra of permutation pattern functions},
  author={Vargas, Yannic},
  journal={Discrete Mathematics \& Theoretical Computer Science},
  number={Proceedings},
  year={2014},
  publisher={Episciences. org}
}
\end{document}